\newtheorem{theorem}{Theorem}[section]
\newtheorem{lemma}[theorem]{Lemma}
\newtheorem{proposition}{Proposition}[section]
\newtheorem{corollary}{Corollary}[section]
\numberwithin{equation}{section}
\title{Lower Bounds  for the Relative Volume of Poincar\'{e}-Einstein Manifolds}
\author{Fang Wang}
\address{School of Mathematical Sciences, Shanghai Jiao Tong University, 800 Dongchuan Rd, Shanghai, 200240}
\email{fangwang1984@sjtu.edu.cn}
\author{Huihuang Zhou}
\address{School of Mathematical Sciences, Shanghai Jiao Tong University, 800 Dongchuan Rd, Shanghai, 200240}
\email{zhouhuihuang@sjtu.edu.cn}
\date{}
\thanks{Wang and Zhou's research are supported in part by National Natural Science Foundation of China No. 11871331 
and Shanghai Science and Technology Innovation Action Plan No. 20JC1413100.}
\begin{document}
\maketitle

\begin{abstract}
In this paper, we show that for a Poincar\'{e}-Einstein manifold $(X^{n+1},g_+)$ with conformal infinity $(M,[\hat{g}])$ of nonnegative Yamabe type, the fractional Yamabe constants of the boundary provide lower bounds for the relative volume. More explicitly, for any $\gamma\in (0,1)$,
$$
\left(\frac{Y_{2\gamma} (M,[\hat{g}])}{Y_{2\gamma} (\mathbb{S}^n , [g_{\mathbb{S}}])}\right)^{\frac{n}{2\gamma}}
\leq  \frac{V(\Gamma_t(p),g_+)}{V(\Gamma_t(0),g_{\mathbb{H}})} 
\leq  \frac{V( B_t(p),g_+)}{V( B_t(0), g_{\mathbb{H}})}
\leq  1,\quad 0<t<\infty,
$$
where  $B_t(p)$, $\Gamma_t(p)$ are the the geodesic ball and geodesic sphere of radius $t$ in $(X,g_+)$ with center at  $p\in X^{n+1}$; and $B_t(0)$,  $\Gamma_t(0)$ are the the geodesic ball and geodesic sphere in $\mathbb{H}^{n+1}$ with center at $0\in\mathbb{H}^{n+1}$. 
\end{abstract}

%%%%%%% section 1. Introducttion
% Known results for Lower bounds and gap
% All main theorems
% Main Idea of proofs
% Notation used throughout of the paper

%%%%%%% section 2. Hyperbolic space
%%%%%%% section 3. Preliminary: distance function and geodesic normal

%%%%%%% section 5.  \gamma\in (0,1)

\section{Introduction}

%In this paper, we consider the lower bounds the fractional powers of Laplacian  on the conformal infinity of a Poincar\'{e}-Einstein manifold introduced by Zworski and Graham in \cite{GZ}. 
%They also can be defined from other point of view, see \cite{CC}

Suppose $\overline{X}^{n+1}$ is a smooth compact manifold with boundary. Denote $X$ the interior of $\overline{X}$ and $M=\partial X$ the boundary. 
%(By "smooth", we always means $C^{\infty}$.) 
Let $\rho$ be a smooth boundary defining function, i.e., 
$$
0\leq \rho\in C^{\infty}(\overline{X}), \quad \rho>0\ \textrm{in $X$},
\quad \rho=0\ \textrm{on $M$}  
\quad\mathrm{and}\quad d\rho|_{M}\neq 0. 
$$
Let $g_+$ be a \textit{Poincar\'{e}-Einstein metric} on $X$, i.e.,   
$(X, g_+)$ is complete  satisfying
$$
Ric_{g_+}=-ng_+,
$$
and $(X,g_+)$ is conformally compact such that $\rho^2 g_+$ extends smoothly to $\overline{X}$. Denote $\hat{g}=\rho^2 g_+|_{TM}$ and we call $(M, [\hat{g}])$ the \textit{conformal infinity} of $(X, g_+)$. 
Direct calculation shows that all the sectional curvature of $(X, g_+)$ approaches to $-1$ as $\rho \rightarrow 0$. Hence $(X, g_+)$ is \textit{asymptotically hyperbolic}.

The study of Poincar\'{e}-Einstein manifolds has become of strong interest through the AdS/CFT correspondence, relating gravitational theories on $X$ with conformal field theories on $M$. See \cite{An1} and many other works.
In mathematics, a basic problem is that how to understand the interior Einstein metric from the boundary conformal geometry. 
In this paper, we mainly focus on the relation between the interior relative volume and the boundary fractional Yamabe constants. Let $\mathbb{H}^{n+1}$ be the hyperbolic space form and $g_{\mathbb{H}}$ the hyperbolic metric. Then the classical Bishop-Gromov comparison theorem implies that 
\begin{equation*}
t\rightarrow \frac{V( B_t(p),g_+)}{V( B_t(0), g_{\mathbb{H}})}\leq 1
\end{equation*}
is a nonincreasing function of $t$, 
where $B_t(p)$ is the geodesic ball of radius $t$ centered at $p\in X$ w.r.t. metric $g_+$, and $B_t(0)$ is the geodesic ball  of the same radius  in $\mathbb{H}^{n+1}$.  
A natural question is that: what is the limit or lower bound of this function when $t\rightarrow +\infty$?

Our work is motivated by the work of Dutta-Javaheri \cite{DJ} and Li-Qing-Shi \cite{LQS}. 
%%%%
\begin{theorem}[\cite{DJ},\cite{LQS}]\label{thm.LQS}
Assume $(X^{n+1},g_+)(n\geq 3)$ is an asymptotically hyperbolic manifold, which is  $C^3$ conformally compact with conformal infinity  of positive Yamabe type. Let $p\in X^{n+1}$ be a fixed point and $t$ is the distance function from $p$. Assume further that
\begin{equation}
Ric_{g_+}\geq -ng_+, \quad R_{g_+} +n(n+1)=o(e^{-2t}).
\end{equation}
Then 
\begin{equation}\label{eq.LQS}
\left(\frac{Y_2 (M,[\hat{g}])}{Y_2 (\mathbb{S}^n , [g_{\mathbb{S}}])}\right)^{\frac{n}{2}} 
\leq  \frac{V(\Gamma_t(p),g_+)}{V(\Gamma_t(0),g_{\mathbb{H}})} 
\leq  \frac{V( B_t(p),g_+)}{V( B_t(0), g_{\mathbb{H}})}
\leq 1,
\quad \forall\ 0<t<+\infty.
\end{equation}
\end{theorem}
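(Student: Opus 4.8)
The plan is to peel off the three inequalities on the right, which are consequences of comparison geometry, from the one on the left, which must exploit the conformally compact structure near infinity. Write $\omega_n:=V(\mathbb{S}^n,g_{\mathbb{S}})$ and $f(t):=V(\Gamma_t(p),g_+)/V(\Gamma_t(0),g_{\mathbb{H}})$. First I would invoke the Bishop-Gromov comparison theorem, available because $Ric_{g_+}\ge -ng_+$: in its volume form it makes $t\mapsto V(B_t(p),g_+)/V(B_t(0),g_{\mathbb{H}})$ nonincreasing with limit $1$ as $t\to 0^+$, which is the rightmost inequality, and in its hypersurface (area) form it makes $f$ nonincreasing. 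Since the cut locus of $p$ has measure zero, the coarea formula gives $V(B_t(p),g_+)=\int_0^t V(\Gamma_s(p),g_+)\,ds$ and likewise in $\mathbb{H}^{n+1}$, so $V(B_t(p),g_+)/V(B_t(0),g_{\mathbb{H}})$ is the average of $f(s)$ over $[0,t]$ against the positive weight $V(\Gamma_s(0),g_{\mathbb{H}})\,ds$ and is thus $\ge f(t)$, which is the middle inequality. Being nonincreasing and bounded below, $f$ has a limit $L:=\lim_{t\to\infty}f(t)\ge 0$, and $f(t)\ge L$ for every $t$; so the whole statement comes down to showing $\left(Y_2(M,[\hat{g}])/Y_2(\mathbb{S}^n,[g_{\mathbb{S}}])\right)^{n/2}\le L$.

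The next step identifies $L$ with a conformal volume of $M$. I would take $\rho_p:=2e^{-d(\cdot,p)}$; from $|d\,d(\cdot,p)|_{g_+}=1$ one gets $|d\rho_p|_{g_+}=\rho_p$, so $\rho_p$ is a geodesic boundary defining function on a collar of $M$, with the regularity up to $M$ of the distance function of a $C^3$ conformally compact metric (this input I would borrow from \cite{LQS}). Writing $g_+=\rho_p^{-2}(d\rho_p^2+h_{\rho_p})$ one has $\hat{h}_p:=h_0=\rho_p^2 g_+|_{TM}\in[\hat{g}]$ and $\Gamma_t(p)=\{\rho_p=2e^{-t}\}$ exactly. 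Because $\rho_p=2e^{-t}$, the hypothesis $R_{g_+}+n(n+1)=o(e^{-2t})$ reads $R_{g_+}=-n(n+1)+o(\rho_p^2)$; combined with $Ric_{g_+}\ge -ng_+$ this forces $Ric_{g_+}+ng_+=o(\rho_p^2)$ as a tensor, since the $n+1$ eigenvalues of $Ric_{g_+}$ relative to $g_+$ are all $\ge -n$ and sum to $-n(n+1)+o(\rho_p^2)$, hence each equals $-n+o(\rho_p^2)$. The asymptotic analysis of the Einstein equation in the geodesic gauge uses that equation only to order $\rho_p^2$, so it still delivers $h_{\rho_p}=\hat{h}_p+\rho_p^2 g^{(2)}+o(\rho_p^2)$, with vanishing $\rho_p$-linear term and $g^{(2)}=-P_{\hat{h}_p}$, where $P_{\hat{h}_p}$ is the Schouten tensor of $\hat{h}_p$; in particular $\mathrm{tr}_{\hat{h}_p}g^{(2)}=-R_{\hat{h}_p}/(2(n-1))$. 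Expanding $dv_{h_{\rho_p}}=\left(1+\tfrac12\rho_p^2\,\mathrm{tr}_{\hat{h}_p}g^{(2)}+o(\rho_p^2)\right)dv_{\hat{h}_p}$ and integrating over $M$ with $\rho_p=2e^{-t}$ then gives
\begin{equation*}
V(\Gamma_t(p),g_+)=2^{-n}e^{nt}\left(V(M,\hat{h}_p)-\frac{e^{-2t}}{n-1}\int_M R_{\hat{h}_p}\,dv_{\hat{h}_p}+o(e^{-2t})\right),
\end{equation*}
and the identical computation in $\mathbb{H}^{n+1}$, where $\hat{h}_0=g_{\mathbb{S}}$, reproduces $V(\Gamma_t(0),g_{\mathbb{H}})=\omega_n\sinh^n t=2^{-n}e^{nt}\left(\omega_n-n\omega_n e^{-2t}+o(e^{-2t})\right)$. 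Dividing yields $L=V(M,\hat{h}_p)/\omega_n$ together with
\begin{equation*}
f(t)=\frac{V(M,\hat{h}_p)}{\omega_n}\left(1+e^{-2t}\left[\,n-\frac{1}{(n-1)V(M,\hat{h}_p)}\int_M R_{\hat{h}_p}\,dv_{\hat{h}_p}\,\right]+o(e^{-2t})\right).
\end{equation*}

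The third step turns monotonicity into a curvature bound and then into a Yamabe bound. Since $f(t)\ge L$ for all $t$, dividing the bracketed expansion of $f$ by $e^{-2t}$ and letting $t\to\infty$ forces the bracket to be $\ge 0$, i.e.\ $\int_M R_{\hat{h}_p}\,dv_{\hat{h}_p}\le n(n-1)\,V(M,\hat{h}_p)$. Testing $u\equiv 1$ in the Yamabe functional of $\hat{h}_p$ and using the conformal invariance of the Yamabe constant,
\begin{equation*}
Y_2(M,[\hat{g}])=Y_2(M,[\hat{h}_p])\le\frac{\int_M R_{\hat{h}_p}\,dv_{\hat{h}_p}}{V(M,\hat{h}_p)^{(n-2)/n}}\le n(n-1)\,V(M,\hat{h}_p)^{2/n}.
\end{equation*}
Since $Y_2(\mathbb{S}^n,[g_{\mathbb{S}}])=n(n-1)\,\omega_n^{2/n}$ and, by the positive Yamabe type hypothesis, $Y_2(M,[\hat{g}])>0$, raising to the power $n/2$ and dividing gives $\left(Y_2(M,[\hat{g}])/Y_2(\mathbb{S}^n,[g_{\mathbb{S}}])\right)^{n/2}\le V(M,\hat{h}_p)/\omega_n=L\le f(t)$, which, together with the middle and rightmost inequalities from the first step, completes the chain.

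I expect the main obstacle to lie in the analytic content of the second step: proving that $\rho_p=2e^{-d(\cdot,p)}$ is a geodesic defining function regular enough up to $M$, and, more delicately, that the $o(\rho_p^2)$ remainder in the expansion of $h_{\rho_p}$ --- hence in that of $V(\Gamma_t(p),g_+)$ --- is \emph{uniform} over $M$, so that it genuinely survives integration and keeps the $e^{-2t}$-coefficient of $f(t)$ reliable. Granting that, the eigenvalue pinching that promotes the scalar-curvature decay to $Ric_{g_+}+ng_+=o(\rho_p^2)$, and the closing Yamabe estimate, are routine.
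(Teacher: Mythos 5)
Your decomposition of the chain into three comparison‐geometry inequalities plus one boundary inequality is the right skeleton, and the first three (Bishop--Gromov, its area form, and the coarea averaging) are handled correctly. The eigenvalue pinching that upgrades $R_{g_+}+n(n+1)=o(e^{-2t})$ together with $Ric_{g_+}\geq -ng_+$ to tracefree Ricci decay is also fine. Your closing step --- forcing the $e^{-2t}$ coefficient of $f(t)$ to be nonnegative, deducing $\int_M R_{\hat{h}_p}\, dV_{\hat{h}_p}\leq n(n-1)V(M,\hat{h}_p)$, and testing $u\equiv 1$ in the Yamabe functional --- is arithmetically correct given the expansion you write down.

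However, your route is not the one taken in \cite{LQS} or in Section~\ref{sec.5} of this paper, and it has a genuine gap hidden in the step you hope to ``borrow.'' You need the second-order Fefferman--Graham expansion $h_{\rho_p}=\hat h_p+\rho_p^2 g^{(2)}+o(\rho_p^2)$, with $g^{(2)}=-P_{\hat h_p}$, in the gauge determined by $\rho_p=2e^{-t}$. But $t=\mathrm{dist}_{g_+}(\cdot,p)$ is not smooth past the cut locus $C_p$, and $C_p$ in general reaches all the way to $M$: Lemma~\ref{lem.u} records only that the level sets $\Gamma_t$ are \emph{Lipschitz} graphs over $M$ for $t$ large. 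The quantitative control \cite{DJ,LQS} actually establish is $|u|\leq C$ and $|du|_{g_+}\leq Ce^{-2r}$ for $u=r-t$, which translated to the compactified metric gives roughly $C^{1,\alpha}$ regularity of $u_\infty$ and hence of $\hat h_p=e^{2u_\infty}\hat g$. That is one full order below what is needed to even make sense of $P_{\hat h_p}$, let alone to prove a uniform $o(\rho_p^2)$ remainder in the expansion of $h_{\rho_p}$. So the regularity you defer ``to be borrowed from \cite{LQS}'' is not there; it is precisely the thing those papers are organized to avoid having to prove. The actual argument (reflected in Section~\ref{sec.3} and in the $\gamma\in(0,1)$ proof in Section~\ref{sec.5}) works with the \emph{smooth} foliation $\Sigma_r$ coming from a smooth geodesic normal defining function $x$, uses the Laplacian comparison $\Delta_+ t\leq n\coth t$ (Lemma~\ref{lem.SY}), deals with $C_p$ via the corner inequality $\partial_{\nu^+}t+\partial_{\nu^-}t\geq 0$ on the normal cut locus (Lemma~\ref{lem.LQS}), and tests a conformally covariant energy functional against the bounded conformal factor $e^{c(r-t)}$; conformal covariance then converts the interior energy into the leading boundary term without ever requiring a second-order expansion of $\rho_p^2 g_+$ at $M$. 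That is a genuinely different mechanism, and it is what closes the argument where the direct FG-expansion approach cannot without substantially more work on the regularity of the distance function up to infinity.
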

%%%%
As applications,  first the inequality (\ref{eq.LQS}) implies the rigidity theorem, i.e., under the assumption of Theorem \ref{thm.LQS}, if $(M,[\hat{g}])$ is conformally equivalent to the round sphere $(\mathbb{S}^n,g_{\mathbb{S}})$, then $(X,g_+)$ is isometric to the hyperbolic space $\mathbb{H}^{n+1}$. 
(The rigidity theorem was also proved in some earlier works under certain geometric conditions; see \cite{Qi1, ST}.) 
Second (\ref{eq.LQS})  also implies that the sectional curvature of $g_+$ is close to $-1$ globally provided that $Y_2 (M,[\hat{g}])$ is close enough to $Y_2 (\mathbb{S}^n , [g_{\mathbb{S}}])$.  
See \cite{LQS} for the details. 

In this paper, we will improve the lower bound of the relative volume for a Poincar\'{e}-Einstein manifold by using the fractional Yamabe constants of its conformal infinity. 
To define the fractional Yamabe constants of $(M, [\hat{g}])$, we need to consider the  fractional GJMS operators on $(M, \hat{g})$. 
Here we follow the definition given in \cite{GZ} and \cite{CC}. 

First, given any boundary smooth representative $\hat{g}$, there exists a \textit{geodesic normal defining function} $x$ and  an identification of $M\times [0,\epsilon)$ with a collar neighborhood of  the boundary, such that in this neighborhood $g_+$ takes the form
$$
g_+=x^{-2}\left(dx^2+g_x\right),
$$
for $g_x$ a one-parameter family of metrics on $M$ with $g_0=\hat{g}$ and having asymptotic expansion that is even in $x$ at least up to order $n$, by\cite{CDLS}. 

For $\gamma\in (0,\frac{n}{2})$, denote $s=\frac{n}{2}+\gamma$.
Assume $\gamma\notin \mathbb{N}$ and  $\frac{n^2}{4}-\gamma^2\notin \mathrm{Spec}(-\Delta_+)$, where $\Delta_+$ is the Lpalcian-Beltrami of $g_+$. 
%Then by the boundness of resolvent in \cite{MM} and Fredholm theory for metric of finite regularity in \cite{Le}, 
By \cite{MM}, if given any $f\in C^{\infty}(M)$, then there  is a unique solution satisfying the following equation:
$$
-\Delta_{+} u-s(n-s)u=0, \quad x^{s-n}u|_{M}=f.
$$
Moreover, $u$ takes the form
$$
u=x^{n-s}F+x^sG, \quad F|_{M}=f, \quad F, G\in C^{\infty}(\overline{X}).
$$
Then the scattering operator $S(s)$ is defined  by 
$$S(s)f=G|_{M}.$$
Notice that the spectral condition can be satisfied by imposing some geometric conditions. 
For example, by \cite{Le1} if $(M, [\hat{g}])$ is of nonnegative Yamabe type, then $\mathrm{Spec}(-\Delta_+)\geq \frac{n^2}{4}$. 
Then the \textit{fractional GJMS operator} of order $2\gamma$ is defined by the renormalised scattering operator:
\begin{equation}\label{def.P}
P^{\hat{g}}_{2\gamma}=d_{\gamma}S\left(\frac{n}{2}+\gamma \right), 
\quad d_{\gamma}=2^{2\gamma}\frac{\Gamma(\gamma)}{\Gamma(-\gamma)}.
\end{equation}
According to \cite{JS}, 
$P^{\hat{g}}_{2\gamma}=(-\Delta_{\hat{g}})^{\gamma}+ L.O.T. .$ 
In particular, when $\gamma=1$, $P^{\hat{g}}_2$ is the conformal Laplacian; when $\gamma=2$, $P^{\hat{g}}_4$ is the Paneitz operator. 
Actually, for any integer $0<k<[\frac{n}{2}]$, $P^{\hat{g}}_{2k}$ is just the classical GJMS operator \cite{GJMS}.
Moreover, the \textit{fractional Q-curvature} is defined by
$$
Q^{\hat{g}}_{2\gamma}=\frac{2}{n-2\gamma}P^{\hat{g}}_{2\gamma}1.
$$
And the \textit{fractional Yamabe constant}  is defined by 
$$
Y_{2\gamma} (M,[\hat{g}])
=\inf_{f\in C^{\infty}(M), f>0}
\frac{\int_{M} fP^{\hat{g}}_{2\gamma}f \mathrm{dV}_{\hat{g}}}{(\int_{M} f^{\frac{2n}{n-2\gamma}} \mathrm{dV}_{\hat{g}})^{\frac{n-2\gamma}{n}}}
=\inf_{\hat{h}\in [\hat{g}]} \frac{\frac{n-2\gamma}{2}\int_M Q^{\hat{h}}_{2\gamma} \mathrm{dV}_{\hat{h}}}{(V(M, {\hat{h}}))^{\frac{n-2\gamma}{n}}}. 
$$

%If $g_+$ is only $C^{k,\alpha}$ conformally compact, then the same idea works for $0<\gamma<\min\{\frac{k+\alpha}{2},\frac{n}{2}\}$. See Section \ref{sec.app} for the details. As to the spectrum condition, it was showed in \cite{Le1} that if the boundary Yamabe constant $Y_{2\gamma} (M,[\hat{g}])\geq 0$, then $\mathrm{Spec}(-\Delta_+)\geq \frac{n^2}{4}$. In this case $P_{2\gamma}$ forms a continuous family of operators for $0<\gamma<\min\{\frac{k+\alpha}{2},\frac{n}{2}\}$.

A standard example is the ball model of the  hyperbolic space $(\mathbb{H}^{n+1}, g_{\mathbb{H}})$, which has conformal infinity $(\mathbb{S}^n, [{g_{\mathbb{S}}}])$, where $g_{\mathbb{S}}$ is the round metric on $\mathbb{S}^n$.
In this case, $ \mathrm{Spec}(-\Delta_+)=[\frac{n^2}{4},+\infty)$.
For any $\gamma \in (0, \frac{n}{2})$,  $P^{g_{\mathbb{S}}}_{2\gamma}$ can be represented by
\begin{equation*}\label{pgamma}
\begin{aligned} 
&P^{g_{\mathbb{S}}}_{2\gamma}=\frac{\Gamma(B+\frac{1}{2}+\gamma)}{\Gamma(B+\frac{1}{2}-\gamma)}, \quad\mathrm{where}\quad
B=\sqrt{-\Delta_{g_{\mathbb{S}}}+\left(\tfrac{n-1}{2}\right)^2}.
\end{aligned}
\end{equation*}
See \cite{Be1}.  Then the fractional Q-curvature is
$$
Q^{g_{\mathbb{S}}}_{2\gamma}=\frac{2}{n-2\gamma} P^{g_{\mathbb{S}}}_{2 \gamma}(1) =  \frac{2}{n-2\gamma} \frac{\Gamma(\frac{n}{2}+\gamma)}{\Gamma(\frac{n}{2}-\gamma)}; 
$$
and the fractional Yamabe constant is
$$
Y_{2\gamma}(\mathbb{S}^n, [{g_{\mathbb{S}}}]) =2^{\frac{2\gamma}{n}}\pi^{\frac{\gamma(n+1)}{n}} \frac{\Gamma(\frac{n}{2}+\gamma)}{\Gamma(\frac{n}{2}-\gamma)}\left[ \Gamma\left(\frac{n+1}{2}\right)\right]^{-\frac{2\gamma}{n}} =\frac{\Gamma(\frac{n}{2}+\gamma)}{\Gamma(\frac{n}{2}-\gamma)} \left|\mathbb{S}^n\right|^{\frac{2\gamma}{n}}.
$$

%The fractional Yamabe problem 

\vspace{0.2in}
Our main result is the following:
\begin{theorem}\label{thm.main}
Assume $(X^{n+1},g_+)$ is a Poincar\'{e}-Einstein manifold  with conformal infinity of non-negative Yamabe type. 
Let $p\in X^{n+1}$ be a fixed point. Let $t$ denote the distance function from $p$. Then for $\gamma\in (0,1)$, we have
\begin{equation}\label{eq.main}
\left(\frac{Y_{2\gamma} (M,[\hat{g}])}{Y_{2\gamma} (\mathbb{S}^n , [g_{\mathbb{S}}])}\right)^{\frac{n}{2\gamma}} \leq \frac{V(\partial B_{g_+}(p,t))}{V(\partial B_{g_{\mathbb{H}}}(0,t))} \leq \frac{V( B_{g_+}(p,t))}{V( B_{g_{\mathbb{H}}}(0,t))}\leq 1,
\quad \forall\ 0\leq t\leq +\infty.
\end{equation}
\end{theorem}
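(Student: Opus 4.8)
The plan is to follow the Bishop–Gromov strategy of Dutta–Javaheri and Li–Qing–Shi, but with two essential upgrades: replace the second-order conformal Laplacian input by the fractional GJMS operator $P^{\hat g}_{2\gamma}$, and exploit the rigidity of the Poincar\'e–Einstein structure rather than the weaker pinching hypothesis $R_{g_+}+n(n+1)=o(e^{-2t})$. The last two inequalities in \eqref{eq.main} are already the classical Bishop–Gromov comparison, since $Ric_{g_+}=-ng_+$ gives exactly the hyperbolic comparison, and monotonicity in $t$ together with $\lim_{t\to 0^+}$ of the ratio being $1$ yields $V(B_{g_+}(p,t))/V(B_{g_{\mathbb H}}(0,t))\le 1$ and the sandwiching by the boundary-sphere ratio. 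So the entire content is the first inequality, and by the monotonicity it suffices to establish it in the limit $t\to+\infty$, i.e.\ to show
\begin{equation*}
\lim_{t\to+\infty}\frac{V(\partial B_{g_+}(p,t))}{V(\partial B_{g_{\mathbb H}}(0,t))} \;\ge\; \left(\frac{Y_{2\gamma}(M,[\hat g])}{Y_{2\gamma}(\mathbb S^n,[g_{\mathbb S}])}\right)^{\frac{n}{2\gamma}}.
\end{equation*}

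First I would set up the geometry of geodesic spheres at infinity. Writing the metric in geodesic normal form $g_+=x^{-2}(dx^2+g_x)$ relative to a chosen boundary representative $\hat g$, the geodesic distance from a fixed interior $p$ and the defining function $x$ are comparable near $M$, and one shows that the rescaled induced volume measures on large geodesic spheres $\partial B_{g_+}(p,t)$ converge, after the natural normalization by the hyperbolic model, to a measure of the form $e^{n\Phi}\,\mathrm{dV}_{\hat g}$ on $M$ for some function $\Phi$ determined by the asymptotic geometry (this is the ``conformal volume at infinity'' and is the same computation that underlies the second-order case). Concretely, $\lim_{t\to\infty}V(\partial B_{g_+}(p,t))/V(\partial B_{g_{\mathbb H}}(0,t)) = |\mathbb S^n|^{-1}\int_M e^{n\Phi}\,\mathrm{dV}_{\hat g}$, and the key point is that $\Phi$ can be identified (via the Busemann-type function / the unique positive eigenfunction built from $p$) with the boundary value of a solution of the scattering problem. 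In the Poincar\'e–Einstein case one has enough regularity (even expansion up to order $n$, by \cite{CDLS}) and enough spectral information (non-negative Yamabe type gives $\mathrm{Spec}(-\Delta_+)\ge n^2/4$ by \cite{Le1}, so the scattering operator $S(s)$ is defined for all $s$ in the relevant range) to run this.

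The heart of the argument is then a test-function estimate for the fractional Yamabe quotient. The function $e^{\Phi}$, or rather an appropriate power of it that plays the role of the conformal factor relating $\hat g$ to the ``metric seen from $p$ at infinity,'' should be exactly comparable to the Poisson-type extension associated with $\gamma$, so that plugging $w=e^{(n-2\gamma)\Phi/2}$ (or the corresponding conformal change $\hat h = e^{2\Phi}\hat g$) into
\begin{equation*}
Y_{2\gamma}(M,[\hat g]) \;\le\; \frac{\int_M w\,P^{\hat g}_{2\gamma}w\,\mathrm{dV}_{\hat g}}{\left(\int_M w^{\frac{2n}{n-2\gamma}}\,\mathrm{dV}_{\hat g}\right)^{\frac{n-2\gamma}{n}}}
\end{equation*}
produces a numerator controlled by $\frac{n-2\gamma}{2}\int_M Q^{\hat h}_{2\gamma}\,\mathrm{dV}_{\hat h}$ and a denominator equal to $V(M,\hat h)^{\frac{n-2\gamma}{n}}$, which by the volume-at-infinity identity above is comparable to $|\mathbb S^n|\cdot\big(\lim_t V(\partial B_{g_+})/V(\partial B_{g_{\mathbb H}})\big)$. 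To close the loop one needs an upper bound on the total fractional $Q$-curvature of $\hat h$, i.e.\ a statement of the form $\frac{n-2\gamma}{2}\int_M Q^{\hat h}_{2\gamma}\,\mathrm{dV}_{\hat h}\le \frac{n-2\gamma}{2}\int_{\mathbb S^n}Q^{g_{\mathbb S}}_{2\gamma}\,\mathrm{dV}_{g_{\mathbb S}}\cdot(\text{something})$; this is where the Einstein condition enters decisively, through the fact that for a Poincar\'e–Einstein filling the renormalized energy / scattering identity gives $\int_M Q^{\hat h}_{2\gamma}\,\mathrm{dV}_{\hat h}$ in terms of a manifestly sign-definite bulk integral (a fractional analogue of the fact that $\int_M Q_n$ is controlled by the renormalized volume). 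Rearranging the resulting chain of inequalities and taking the $n/(2\gamma)$ power gives \eqref{eq.main}.

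I expect the main obstacle to be precisely the identification of the ``volume at infinity'' function $\Phi$ with the scattering/Poisson data for the fractional parameter $\gamma$, and the accompanying upper bound on $\int_M Q^{\hat h}_{2\gamma}\,\mathrm{dV}_{\hat h}$. In the $\gamma=1$ case of \cite{LQS} this is handled by a fairly direct ODE analysis of the conformal Laplacian along geodesics together with the Bishop–Gromov volume derivative; for general $\gamma\in(0,1)$ the operator $P^{\hat g}_{2\gamma}$ is nonlocal, so one cannot argue pointwise along a single geodesic and must instead work with the extension problem $-\Delta_+ u - s(n-s)u=0$ on all of $X$, control the solution's decay uniformly, and match the $x^{n-s}$ and $x^s$ coefficients against the geometric expansion of $g_x$. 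The technical heart is thus an energy/Rellich-type identity on $(X,g_+)$ relating $\int_M w P^{\hat g}_{2\gamma}w\,\mathrm{dV}_{\hat g}$ to a weighted Dirichlet energy of the extension, combined with the Einstein equation to make that energy comparable to the volume defect of the geodesic balls; everything else is monotonicity and the explicit value of $Y_{2\gamma}(\mathbb S^n,[g_{\mathbb S}])$ recorded above.
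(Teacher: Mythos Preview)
Your architecture is right: the last two inequalities are Bishop--Gromov, the content is the first inequality at $t\to\infty$, and the route is a test-function estimate in the fractional Yamabe quotient via the Case--Chang extension. But the proposal has a genuine gap at exactly the step you flag as the ``main obstacle,'' and the mechanism you sketch for closing it is not the one that works.

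You propose to take the boundary conformal factor $e^{\Phi}$ coming from the volume-at-infinity and argue that its Poisson/scattering extension, together with some ``renormalized energy / scattering identity,'' yields a sign-definite bulk integral controlling $\int_M Q^{\hat h}_{2\gamma}\,\mathrm{dV}_{\hat h}$. That is too vague, and in fact the natural scattering extension does not by itself produce the needed sign. The paper's key idea is different and quite concrete: one does \emph{not} extend the boundary data by the scattering solution on $X$, but instead transplants the \emph{hyperbolic} radial eigenfunction. Let $\Phi(t)$ solve $-\Delta_{\mathbb H}\Phi-\big(\tfrac{n^2}{4}-\gamma^2\big)\Phi=0$ on $\mathbb H^{n+1}$ with the correct leading asymptotics, and set $\tilde\rho=\Phi(t)^{2/(n-2\gamma)}$ using the $g_+$-distance $t$ on $X$; pair this with the adapted defining function $\bar\rho$ (for which $\bar J^m_\psi\equiv 0$) and take the conformal factor $\phi=(\bar\rho/\tilde\rho)^{(n-2\gamma)/2}$ as the test function in the bulk energy. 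The point of this choice is that the weighted scalar curvature $\tilde J^m_\psi$ of $\tilde g=\tilde\rho^2 g_+$ reduces to $\Phi'(t)(\Delta_+ t-n\coth t)$ up to a nonnegative factor, so the Laplacian comparison $\Delta_+ t\le n\coth t$ (from $Ric_{g_+}=-ng_+$) together with $\Phi'<0$ gives $\tilde J^m_\psi\le 0$ pointwise. That is where the Einstein condition actually enters; there is no separate ``renormalized energy identity'' doing the work. Combined with the conformal invariance of the energy and the mean-curvature asymptotics, this yields $I_{2\gamma}(\overline X,\bar g;\phi)\le \tfrac{n-2\gamma}{2}Q^{g_{\mathbb S}}_{2\gamma}\,V(M,\tilde g_\infty)^{2\gamma/n}$, which is exactly the desired inequality after identifying $V(M,\tilde g_\infty)/|\mathbb S^n|$ with the volume-ratio limit.

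A second omission: because the test function is built from the distance $t$, it fails to be smooth on the cut locus $C_p$, and the integration-by-parts underlying the energy comparison picks up boundary terms on $N_p$. One needs the structure of $C_p$ (Hausdorff dimension $\le n-1$ for the conjugate/degenerate part, smooth hypersurface structure on the normal cut locus) plus the corner inequality $\partial_{\nu^+}t+\partial_{\nu^-}t\ge 0$ and $\Phi'<0$ to show these terms have the right sign. Your outline does not mention this, and without it the energy inequality is formal.
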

Notice that, by \cite[Theorem 1.2]{GQ1} and \cite[Lemma 3]{WZ}, if $Y_{2} (M,[\hat{g}])\geq 0$, then $Y_{2\gamma} (M,[\hat{g}])\geq 0$ for all $\gamma\in (0,1)$. 
We say that the lower bound of (\ref{eq.main}) is better then (\ref{eq.LQS}) because of the the following comparison theorem given in  \cite{WZ}.
%%%%
\begin{theorem}[\cite{WZ}]\label{thm.Y2Y1}
Assume $(X^{n+1},g_+)$ is a  Poincar\'{e}-Einstein manifold  with conformal infinity $(M, [\hat{g}])$. Assume $\frac{n^2-1 }{4}\notin Spec(-\Delta_+)$ and $Y_1 (M, [\hat{g}])$ can be achieved by some smooth representative $\hat{g}$. Then
\begin{equation}
\frac{Y_2 (M, [\hat{g}])}{Y_2 (\mathbb{S}, [g_{\mathbb{S}}])}\leq \left(\frac{Y_1 (M, [\hat{g}])}{Y_1 (\mathbb{S}, [g_{\mathbb{S}}])}\right)^2 ,
\end{equation}
and the equality holds if and only if $(X,g_+)$ is isometric to the hyperbolic space $\mathbb{H}^{n+1}$.
\end{theorem}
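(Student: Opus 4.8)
Since $Y_1$ and $Y_2$ are conformal invariants of $[\hat g]$ and $Y_1(M,[\hat g])$ is attained, normalize $\hat g$ to be a $Y_1$-minimizing representative; then $f\equiv 1$ is critical for the $Y_1$-quotient, so $P_1^{\hat g}1\equiv\mu$ is a constant, $\mu\ge 0$ (by the cited sign results $Y_2\ge 0\Rightarrow Y_1\ge 0$), and $Y_1(M,[\hat g])=\mu V^{1/n}$ with $V:=V(M,\hat g)$; assume $\mu>0$, the case $\mu=0$ being trivial. Testing the $Y_2$-quotient with $f\equiv 1$ and using $P_2^{\hat g}1=\tfrac{n-2}{4(n-1)}R_{\hat g}$ gives $Y_2(M,[\hat g])\le\tfrac{n-2}{4(n-1)}V^{-(n-2)/n}\int_M R_{\hat g}\,dV_{\hat g}$. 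Plugging in $Y_1(\mathbb S^n)=\tfrac{n-1}{2}|\mathbb S^n|^{1/n}$ and $Y_2(\mathbb S^n)=\tfrac{n(n-2)}{4}|\mathbb S^n|^{2/n}$, the desired inequality $Y_2(M)/Y_2(\mathbb S^n)\le(Y_1(M)/Y_1(\mathbb S^n))^2$ reduces to the single scalar estimate
\[
\int_M R_{\hat g}\,dV_{\hat g}\ \le\ \frac{4n}{n-1}\,\mu^2\,V ,
\]
which on $\mathbb H^{n+1}$ (where $\hat g=g_{\mathbb S}$, $\mu=\tfrac{n-1}{2}$) is an equality — this both pins down the constant and encodes the rigidity.

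\textbf{Extension characterization.} Pass to the geodesic compactification $\bar g=x^2g_+=dx^2+g_x$, whose boundary $M$ is totally geodesic and whose expansion is even. Because $g_+$ is exactly Einstein, $R_{g_+}+n(n+1)\equiv 0$, so the Chang--Gonz\'alez $\gamma=\tfrac12$ extension has the clean form $-\Delta_{\bar g}W+E\,W=0$ in $X$, $W|_M=1$, with curvature potential $E$ an explicit multiple of $R_{\bar g}$ and with $P_1^{\hat g}1$ the appropriately normalized Neumann datum of $W$ on $M$ (the mean-curvature correction that would otherwise appear for $\gamma\ge\tfrac12$ drops out since $M$ is totally geodesic). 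Hence $W$ minimizes $\int_X(|\nabla_{\bar g}\widetilde W|^2+E\widetilde W^2)\,dV_{\bar g}$ among extensions of $1$, with minimum $\mu V$, and integrating the equation once gives $\int_X E\,W\,dV_{\bar g}=\mu V$. Moreover the Gauss equation together with $Ric_{\bar g}(\nu,\nu)|_M=\tfrac{R_{\hat g}}{2(n-1)}$ (immediate from the Einstein equation in geodesic gauge) yields $R_{\bar g}|_M=\tfrac{n}{n-1}R_{\hat g}$, so the scalar estimate above is equivalent to $\int_M R_{\bar g}\,dV_{\hat g}\le\tfrac{4n^2}{(n-1)^2}\mu^2 V$.

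\textbf{The crux: interior comparison.} It remains to bound the \emph{boundary} integral $\int_M R_{\hat g}\,dV_{\hat g}$ by the \emph{interior} quantity $\mu V=\int_X EW\,dV_{\bar g}$, and this is where the full Einstein condition $Ric_{g_+}=-ng_+$ (not merely $R_{g_+}=-n(n+1)$) must enter. The plan is a Pohozaev / Rellich--Reilly-type integral identity on $(\overline X,\bar g)$ for the extension equation: pairing $-\Delta_{\bar g}W+EW=0$ with the conformal field $x\partial_x=\nabla_{\bar g}(x^2/2)$, or applying a Reilly identity to $W$, produces boundary terms built from $W|_M=1$ and the Neumann datum $\mu$, together with interior error terms that one controls by $\int_X EW$ using $Ric_{g_+}=-ng_+$ and the even expansion of $g_x$ (in particular $\partial_x E|_M=0$). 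Equivalently, one may compare $W$ by the maximum principle with the explicit hyperbolic model solution and bound the weighted volume $\int_X EW\,dV_{\bar g}$ by a Bishop--Gromov argument on $(X,g_+)$. I expect this step to be the main obstacle, as it is the only place where interior Einstein geometry is genuinely converted into boundary curvature data.

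\textbf{Rigidity.} Equality in the scalar estimate forces equality throughout the crux step: all interior error terms vanish and $W$ coincides with the model solution, which forces $(\overline X,\bar g)$ to be the hyperbolic model compactification and hence $(X,g_+)\cong(\mathbb H^{n+1},g_{\mathbb H})$. The converse direction is the explicit sphere computation already noted above.
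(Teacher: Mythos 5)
This theorem is quoted by the paper from reference [WZ] and is not proved in the present article, so I can only assess your proposal on its own merits rather than against an in-paper proof.

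Your reduction is sound. Normalizing $\hat g$ to be a $Y_1$-minimizing representative makes $P_1^{\hat g}1=\mu$ constant with $Y_1(M,[\hat g])=\mu V^{1/n}$, and testing the $Y_2$-quotient with $f\equiv1$ together with the sphere values $Y_1(\mathbb S^n)=\tfrac{n-1}{2}\lvert\mathbb S^n\rvert^{1/n}$ and $Y_2(\mathbb S^n)=\tfrac{n(n-2)}{4}\lvert\mathbb S^n\rvert^{2/n}$ does reduce the desired inequality to the scalar bound $\int_M R_{\hat g}\,dV_{\hat g}\le\tfrac{4n}{n-1}\mu^2 V$, with equality on the round model. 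Your extension paragraph is also essentially correct: for $\gamma=\tfrac12$ one has $m=0$ and, in the Fefferman--Graham gauge $\bar g=x^2g_+$, the Case--Chang operator is $-\bar\Delta+\tfrac{n-1}{4n}\bar R$; since $M$ is totally geodesic and $g_x$ is even in $x$, one gets $\partial_\nu W|_M=\mu$ (with $d_{1/2}=-1$), $R_{\bar g}|_M=\tfrac{n}{n-1}R_{\hat g}$, and, integrating the equation, $\int_X EW\,dV_{\bar g}=\mu V$. One can even observe that this $W$ is exactly the conformal factor taking $\bar g$ to the scalar-flat adapted compactification of Lemma~\ref{lem.CC3} with $\gamma=\tfrac12$, so $\tilde g=W^{4/(n-1)}\bar g$ is scalar-flat with constant boundary mean curvature $\tfrac{2n}{n-1}\mu$.

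The problem is that the ``crux'' paragraph is not an argument but an explicit acknowledgment that the key step is missing. The identity $\int_X EW\,dV_{\bar g}=\mu V$ is a purely interior quantity and by itself gives no control of the boundary integral $\int_M R_{\hat g}\,dV_{\hat g}$; the scalar bound $\int_M R_{\hat g}\le\tfrac{4n}{n-1}\mu^2 V$ is precisely where the full Einstein hypothesis $Ric_{g_+}=-ng_+$ (not merely scalar-flatness of $\bar g$) must be converted into geometry, and neither of your two proposed mechanisms is carried out. The Pohozaev/Reilly identity is not written down, its error terms are not identified, and it is not checked that it closes on the sphere. The maximum-principle alternative has no obvious ordering principle: $W$ depends on the full conformal data of $(M,[\hat g])$ and is not a function of a single radial variable, so there is no model solution to compare it to pointwise, and the ``Bishop--Gromov on $(X,g_+)$'' suggestion does not attach to the weighted volume $\int_X EW\,dV_{\bar g}$ in any visible way. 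Until this inequality is actually proved, the chain stops. There is also a secondary gap in the rigidity discussion: saturation of the inequality forces equality both in the test-function step (so $f\equiv1$ is a genuine $Y_2$-minimizer, hence $R_{\hat g}$ is constant) and in the scalar estimate; you address only the latter, and only conditionally on the unproved crux. As it stands this is a correct and useful reformulation plus an identification of the right elliptic framework, but not a proof.
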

%%%%

%Then a natural question is that in (\ref{eq.main}), which $\gamma\in (0,1)$ provide the best lower bound? Since Theorem \ref{eq.Y2Y1} 

As an application, Theorem \ref{thm.main} implies the following estimate of the sectional curvature by a  similar proof as \cite[Theorem 1.6]{LQS}. 
\begin{corollary}\label{thm.gap}
For any $\gamma\in (0,1)$ and $\varepsilon>0$, there exists $\delta=\delta(\gamma,\epsilon)>0$, such that for any  Poincar\'{e}-Einstein manifold  $(X^{n+1},g_+)$,  if
$
Y_{2\gamma} (M,[\hat{g}])\geq (1-\delta)Y_{2\gamma}(\mathbb{S}^{n},[g_{\mathbb{S}}]), 
$
then
\begin{equation}\label{eq.gap}
|K[g_+]+1|\leq \varepsilon,
\end{equation}
for all sectional curvature $K$ of $g_+$.

\end{corollary}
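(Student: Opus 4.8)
The plan is to argue by contradiction, mimicking the compactness-type argument of \cite[Theorem 1.6]{LQS}, but now starting from the sharper lower bound \eqref{eq.main} of Theorem \ref{thm.main}. Suppose the statement fails for some fixed $\gamma\in(0,1)$ and some $\varepsilon_0>0$: then there is a sequence of Poincar\'{e}-Einstein manifolds $(X_k^{n+1},g_k)$ with $Y_{2\gamma}(M_k,[\hat g_k])\geq (1-\tfrac1k)Y_{2\gamma}(\mathbb{S}^n,[g_{\mathbb{S}}])$, yet at some point and some $2$-plane the sectional curvature satisfies $|K[g_k]+1|>\varepsilon_0$. First I would record that since the conformal infinities have nonnegative Yamabe type and $Y_{2\gamma}(M_k,[\hat g_k])\to Y_{2\gamma}(\mathbb{S}^n,[g_{\mathbb{S}}])$, Theorem \ref{thm.main} forces
\begin{equation*}
\frac{V(B_{g_k}(p_k,t))}{V(B_{g_{\mathbb{H}}}(0,t))}\longrightarrow 1\quad\text{and}\quad \frac{V(\partial B_{g_k}(p_k,t))}{V(\partial B_{g_{\mathbb{H}}}(0,t))}\longrightarrow 1
\end{equation*}
for every fixed $t\in(0,\infty)$, where $p_k\in X_k$ is chosen (using homogeneity of the contradiction hypothesis up to rescaling the base point) so that the bad $2$-plane sits in $B_{g_k}(p_k,1)$; here one uses that the Bishop--Gromov ratio is monotone in $t$, so convergence of the limit at $t=\infty$ (which is what \eqref{eq.main} controls via the Yamabe constants) squeezes the whole profile to that of $\mathbb{H}^{n+1}$.

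Next I would run the standard Cheeger--Gromov / elliptic-regularity machinery. Because $Ric_{g_k}=-ng_k$ is fixed and the volume ratios converge to $1$, a volume-comparison + harmonic-radius argument (exactly as in \cite{LQS}, building on \cite{An1}) gives a uniform lower bound on the harmonic radius on $B_{g_k}(p_k,1)$, hence $C^{1,\alpha}$ — and then by Einstein elliptic bootstrap $C^\infty$ — precompactness: a subsequence of $(B_{g_k}(p_k,1),g_k,p_k)$ converges in $C^\infty_{\mathrm{loc}}$ to a pointed Einstein manifold $(B_\infty,g_\infty,p_\infty)$ with $Ric_{g_\infty}=-ng_\infty$ and, by the volume convergence above, $V(B_{g_\infty}(p_\infty,t))=V(B_{g_{\mathbb{H}}}(0,t))$ for all small $t$. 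The equality case of the Bishop--Gromov inequality then identifies $(B_\infty,g_\infty)$ isometrically with a hyperbolic ball, so $K[g_\infty]\equiv -1$ there. But the sectional curvatures converge in $C^0_{\mathrm{loc}}$, so the bad $2$-planes (which by construction live in a fixed compact region) would force $|K[g_\infty]+1|\geq\varepsilon_0$ somewhere, a contradiction.

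I expect the main obstacle to be the first step — extracting genuine \emph{local} geometric control (a uniform harmonic-radius bound) from the \emph{global} volume-ratio convergence, and in particular justifying that the relevant $2$-plane can be assumed to lie in a region of controlled geometry. One must be careful that the contradiction hypothesis gives a $2$-plane at \emph{some} point with no a priori location; the fix is that the inequality \eqref{eq.main} together with Bishop--Gromov is scale/base-point compatible in the sense that one may re-center at a point near the offending plane without losing the Yamabe-constant input (which is a boundary quantity, unaffected by the choice of $p$), and then the volume ratios at that new center still converge to $1$. The remaining ingredients — precompactness of Einstein metrics with bounded geometry, and the rigidity in the equality case of Bishop--Gromov — are by now standard, so the write-up can cite \cite{LQS} and \cite{An1} for those and concentrate on feeding in the stronger hypothesis. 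A short remark should note that this is genuinely weaker a hypothesis than in \cite{LQS} precisely because of Theorem \ref{thm.Y2Y1}: closeness of $Y_{2\gamma}$ to the round value is implied by, but does not imply, closeness of $Y_2$.
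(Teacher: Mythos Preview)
The paper does not actually give a proof of this corollary; it simply asserts (just before the statement) that it follows from Theorem~\ref{thm.main} ``by a similar proof as \cite[Theorem~1.6]{LQS}.'' Your proposal is a faithful reconstruction of that contradiction/compactness argument, with the correct new input being that the inequality~\eqref{eq.main} holds for \emph{every} base point~$p$, so you are free to re-center at the point carrying the offending $2$-plane. One small cleanup: there is no ``rescaling'' involved in that step---just the observation that the fractional Yamabe constant is a boundary invariant independent of~$p$, hence the volume-ratio squeeze holds at the re-centered points as well; you say this correctly in your second discussion of the re-centering, so just drop the earlier phrasing. You should also make explicit why Theorem~\ref{thm.main} applies along the sequence: the hypothesis $Y_{2\gamma}(M_k,[\hat g_k])>0$ (for $\delta<1$) forces nonnegative Yamabe type of the conformal infinity via the sign equivalence in \cite{GQ1} (cf.\ the remark after Theorem~\ref{thm.main}), which is what the theorem requires.
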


In this paper, unlike  \cite{DJ} and \cite{LQS}, we have a strong regularity assumption on the conformal compactification of $g_+$  i.e. $\rho^2g_+$ extends smoothly to the boundary. This is mainly because the resolvent analysis  and the scattering theory for $\Delta_+$ require a full Taylor expansion of the metric $\rho^2g_+$ near the boundary, which are used to define the fractional GJMS operators; see \cite{MM} and \cite{JS}. 
However, by the boundary regularity theorem given in  \cite{CDLS}, when the boundary dimension $n$ is even, there may be obstruction to the global smoothness and  $\rho^2g_+$ may have logarithmic terms in its asymptotic expansion, starting from $\rho^n \log \rho$. For this case, it is nature to consider a corresponding version of \cite{MM} and \cite{JS} with polyhomogenous background metric $g_+$, and after that the discussion of $P^{\hat{g}}_{2\gamma}$ for $\gamma\in (0,\frac{n}{2})$ will be similar as the smooth case . 

% It is expected the results of \cite{MM} and \cite{JS} can be extended to such cases 

The paper is outlined as follows: 
In Section \ref{sec.2}, we introduce  the fractional GJMS operators via metric measure space by Case-Chang and give an equivalent representation of the fractional Yamabe constant by using the compactified metric. 
In Section \ref{sec.3}, we recall some basic properties for the cut locus and the distance function of $g_+$. 
In Section \ref{sec.4}, we give some explicit description of the adapted compactification for the model space $\mathbb{H}^{n+1}$. 
In Section \ref{sec.5}, we consider two conformal compactifications for $g_+$: one is the adapted compactification and the other uses the distance function, which coincide in the model space $\mathbb{H}^{n+1}$; and then we prove Theorem \ref{thm.main}. 
In Section \ref{sec.6}, we give some relation between  the Escobar-Yamabe constants and the relative volume. 

%\textbf{Acknowledgment}: 

%%%%%%%%%%%%%%%%%%%%%%%%%%%%%%%%%%%%%%%%%%%%%%
\vspace{0.2in}
\section{Smooth Metric Measure Space and Adapted Boundary Defining Function}\label{sec.2}
In this section, we recall that the fractional GJMS operators via scattering theory can also be identified as some boundary operators associated to the weighted GJMS operators via smooth metric measure space. See  \cite{Ca,CC} for a full discussion. Here we only consider the case
$$
\gamma\in (0,1). 
$$

Let $(X,g_+)$ be a Poincar\'{e}-Einstein metric with conformal infinity $(M, [\hat{g}])$. 
Assume $\hat{g}$ is a smooth boundary representative. 
Let $x$ be the geodesic normal defining function corresponding to $\hat{g}$. 
Then by \cite{CDLS}, near the boundary
$$
\begin{aligned}
g_+& =x^{-2}\left(dx^2 + \hat{g}+x^2g_2+\cdots  \right),
\end{aligned}
$$
where $g_2=-\hat{A}$ and $\hat{A}$ is the Schouten tensor of $\hat{g}$. 

Consider a general boundary defining function $\bar{\rho}$ (not necessarily smooth)  and set $\bar{g}=\bar{\rho}^2g_+$. 
Denote $\bar{\nabla}, \bar{\Delta}$ the covariant derivative and Beltrami-Laplacian of $\bar{g}$;  $\bar{R}$ the scalar curvature of $\bar{g}$ and $\bar{J}=\frac{1}{2n}\bar{R}. $ 
For any $m=1-2\gamma$, set $\bar{\rho}^m=e^{-\psi}$ and define the weighted conformal Laplacian by
\begin{equation}\label{def.L}
\bar{L}_{2,\psi}^{m}=-\bar{\Delta}_{\psi}+\frac{m+n-1}{2}\bar{J}_{\psi}^m,
\end{equation}
where $\bar{\Delta}_{\psi}$ and $\bar{J}_{\psi}^m$ are the weighted Laplacian and weighted scalar curvature defined as follows: 
\begin{equation}\label{def.J}
\begin{aligned}
	 \bar{\Delta}_{\psi} &=\bar{\Delta}-\bar{\nabla} \psi, 
	 \\
	 \bar{J}_{\psi}^m &=\frac{1}{2(m+n)}\left[\bar{R}-2m\frac{\bar{\Delta} \bar{\rho}}{\bar{\rho}}+m(m-1)\left(\frac{1-|\bar{\nabla} \bar{\rho}|^2}{\bar{\rho}^2}\right)\right]
	 \\
	 &=\bar{J} -\frac{m}{n+1}\left(\bar{J}+\frac{\bar{\Delta} \bar{\rho}}{\bar{\rho}}\right).
\end{aligned}
\end{equation}
Here $\bar{L}_{2,\psi}^{m}$ is conformally covariant in the following sense: if $\tilde{g}=\tilde{\rho}^2g_+=\phi^{-2}\bar{g}$, then
\begin{equation}\label{eq.L2}
\tilde{L}_{2,\psi}^m =\phi^{\frac{m+n+3}{2}}\bar{L}_{2,\psi}^m \phi^{-\frac{m+n-1}{2}}. 
\end{equation}

\begin{lemma}[\cite{CC}]\label{lem.CC3}
Assume $\mathrm{Spec}(-\Delta_+)\geq \frac{n^2}{4}-\gamma^2$. 
Let $\bar{\rho}=v^{\frac{2}{n-2\gamma}}$, where $v$ satisfies
$$
-\Delta_+v-\left(\frac{n^2}{4}-\gamma^2\right)v=0, \quad x^{\gamma-\frac{n}{2}}v|_{M}=1. 
$$
Then $\bar{J}_{\psi}^m=0$. 
Such $\bar{\rho}$ is called an \textit{adapted boundary defining function}. 
\end{lemma}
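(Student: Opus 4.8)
The plan is to substitute $\bar\rho = v^{2/(n-2\gamma)}$ directly into the formula for $\bar J^m_\psi$ recorded in \eqref{def.J}, using the convenient second expression
$$
\bar J^m_\psi = \bar J - \frac{m}{n+1}\left(\bar J + \frac{\bar\Delta \bar\rho}{\bar\rho}\right),
$$
and show the right-hand side vanishes once $v$ is an eigenfunction at the bottom of the continuous spectrum. The key point is to translate the PDE $-\Delta_+ v = (\frac{n^2}{4}-\gamma^2)v$ on $(X,g_+)$ into an equation for $\bar\rho$ with respect to $\bar g = \bar\rho^2 g_+$. First I would recall the conformal transformation law for the Laplacian on functions under $g_+ = \bar\rho^{-2}\bar g$ in dimension $n+1$: for any $w$,
$$
\Delta_+ w = \bar\rho^2 \bar\Delta w + (n-1)\,\bar\rho\, \langle \bar\nabla\bar\rho, \bar\nabla w\rangle_{\bar g}.
$$
Applying this with $w = v = \bar\rho^{(n-2\gamma)/2}$ (note $(n-2\gamma)/2 = \frac{n}{2}-\gamma = n-s$) and computing $\bar\nabla v$, $\bar\Delta v$ by the chain rule converts the scattering equation into a scalar equation relating $\bar\Delta\bar\rho$, $|\bar\nabla\bar\rho|^2_{\bar g}$ and $\bar\rho$. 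I expect this to reproduce exactly the combination
$$
\bar R - 2m\,\frac{\bar\Delta\bar\rho}{\bar\rho} + m(m-1)\left(\frac{1-|\bar\nabla\bar\rho|^2_{\bar g}}{\bar\rho^2}\right) = 0
$$
with $m = 1-2\gamma$, i.e. precisely $\bar J^m_\psi = 0$ by the first line of \eqref{def.J}; here one also uses that $\bar R$ is itself expressible in terms of $\bar\rho$ and its derivatives via the conformal change from the Einstein metric $g_+$ (the scalar curvature of $g_+$ being $-n(n+1)$).

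Concretely, the steps in order: (i) write down the conformal Laplacian transformation law and the scalar curvature transformation law for $\bar g = \bar\rho^2 g_+$, using $Ric_{g_+} = -n g_+$; (ii) set $v = \bar\rho^{n-s}$ with $s = \frac n2 + \gamma$, compute $\Delta_+ v$ in terms of $\bar\rho$-derivatives, and impose $-\Delta_+ v = s(n-s)v$; (iii) algebraically simplify to extract an identity for $\bar R - 2m\bar\Delta\bar\rho/\bar\rho + m(m-1)(1-|\bar\nabla\bar\rho|^2)/\bar\rho^2$ and observe it is zero; (iv) conclude $\bar J^m_\psi = 0$ from \eqref{def.J}. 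The boundary normalization $x^{\gamma - n/2} v|_M = 1$, i.e. $x^{s-n}v|_M = 1$, is what guarantees (via \cite{MM}) that such a $v$ exists and is positive near $M$, so that $\bar\rho = v^{2/(n-2\gamma)}$ is a genuine (positive, smooth in the interior) boundary defining function; the spectral assumption $\mathrm{Spec}(-\Delta_+)\ge \frac{n^2}{4}-\gamma^2$ ensures $v > 0$ globally, so $\bar\rho$ is well-defined on all of $X$.

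The main obstacle I anticipate is purely computational bookkeeping: getting the dimension-dependent constants right in the conformal transformation laws (the relevant "conformal dimension" here is $n+1$, not $n$, and the exponent $n-s = \frac{n-2\gamma}{2}$ must be tracked carefully through two chain-rule differentiations), and then recognizing that the resulting mess is exactly the bracketed expression in \eqref{def.J}. There is no deep difficulty — this is essentially the computation of Case--Chang \cite{CC} identifying the adapted defining function, and one could alternatively cite it verbatim — but to present it self-contained I would organize it as: first reduce to showing $\bar J + \bar\Delta\bar\rho/\bar\rho = \frac{n+1}{m}\,\bar J$ is \emph{not} the right framing, and instead show directly that the first line of \eqref{def.J} vanishes, since that line is manifestly the one tied to the equation $-\Delta_+ v = s(n-s)v$ after clearing the power $v^{2/(n-2\gamma)}$. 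A sanity check is to verify the conclusion on $\mathbb{H}^{n+1}$, where $v$ can be written explicitly and $\bar\rho$ is the known adapted defining function described later in Section \ref{sec.4}.
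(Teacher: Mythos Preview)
The paper does not supply its own proof of this lemma; it is stated with attribution to Case--Chang \cite{CC} and used as a black box. Your proposal is therefore not competing with any argument in the paper, and the computational route you outline---transforming the eigenfunction equation for $v$ into an identity for $\bar\rho$, combining it with the scalar-curvature transformation law under $\bar g=\bar\rho^2 g_+$ (using $R_{g_+}=-n(n+1)$), and verifying that the bracketed expression in \eqref{def.J} collapses to zero---is the correct one and is exactly how the result is established in \cite{CC}. One minor slip: in dimension $n+1$ the conformal Laplacian rule under $g_+=\bar\rho^{-2}\bar g$ reads $\Delta_+ w=\bar\rho^2\bar\Delta w-(n-1)\bar\rho\langle\bar\nabla\bar\rho,\bar\nabla w\rangle_{\bar g}$, with a minus sign rather than the plus you wrote; once that is fixed the algebra in your step (iii) closes (one finds $\bar\Delta\bar\rho/\bar\rho=-s(1-|\bar\nabla\bar\rho|^2)/\bar\rho^2$, and then $-n(n+1)+m(m-1)+2(n+m)s=0$).
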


\begin{lemma}[\cite{CC}]\label{lem.CC1}
 Assume 
$$
\bar{\rho}=x+Ax^{1+2\gamma} +o(x^{1+2\gamma})
$$
for some $A\in C^{\infty}(M)$. Given any $f\in C^{\infty}(M)$, the function $U$ is the solution of the boundary value problem
\begin{equation}
\begin{cases}
\bar{L}_{2,\psi}^mU=0, &\textrm{in $X$},
\\
U=f, &\textrm{on $M$},
\end{cases}
\end{equation}
if and only if the function $u=\bar{\rho}^{n-s}U$ is the solution of the Poisson problem
\begin{equation}
\begin{cases}
-\Delta_+u-s(n-s)u=0, &\textrm{in $X$},
\\
u=x^{n-s}F+x^sG, & F, G\in C^{\infty}(\overline{X}),
\\
F=f, &\textrm{on $M$},
\end{cases}
\end{equation}
for $s=\frac{n}{2}+\gamma$. Moreover, 
\begin{equation}
P_{2\gamma}^{\hat{g}} f-\frac{n-2\gamma}{2}d\gamma A f=\frac{d_{\gamma}}{2\gamma} \lim_{\bar{\rho}\rightarrow 0}	\bar{\rho}^m\frac{\partial U}{\partial \bar{\rho}}.
\end{equation}
\end{lemma}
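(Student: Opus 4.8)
The plan is to read off both assertions from the conformal covariance \eqref{eq.L2} of $\bar L^m_{2,\psi}$, applied to the interior conformal factor $\phi=\bar\rho$, which identifies $\bar g$ with $g_+$ itself. First I would compute the weighted operator attached to the formal ``compactification'' $\tilde g=g_+$, $\tilde\rho\equiv 1$: substituting $\bar\rho\equiv 1$ into \eqref{def.J} kills every $\bar\rho$-derivative term, so $\tilde J^m_\psi=\tilde J-\tfrac{m}{n+1}\tilde J=\tfrac{n+1-m}{n+1}\tilde J$, and since $g_+$ is Einstein with $R_{g_+}=-n(n+1)$ one has $\tilde J=\tfrac{1}{2n}R_{g_+}=-\tfrac{n+1}{2}$, hence
\begin{equation*}
\tilde L^m_{2,\psi}=-\Delta_+ +\frac{m+n-1}{2}\Big(-\frac{n+1-m}{2}\Big)=-\Delta_+-\frac{(n-2\gamma)(n+2\gamma)}{4}=-\Delta_+-s(n-s),
\end{equation*}
using $m=1-2\gamma$ and $s=\tfrac n2+\gamma$. (That \eqref{eq.L2} remains valid for $\phi=\bar\rho$ is immediate, since it is a pointwise differential identity valid for any positive smooth factor, with $\tilde\rho=\phi^{-1}\bar\rho=1$.)

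With $\phi=\bar\rho$, so that $\tilde g=\phi^{-2}\bar g=g_+$, and with $\tfrac{m+n-1}{2}=\tfrac{n-2\gamma}{2}=n-s$, identity \eqref{eq.L2} becomes, for $u:=\bar\rho^{n-s}U$,
\begin{equation*}
-\Delta_+ u-s(n-s)u=\tilde L^m_{2,\psi}\big(\bar\rho^{n-s}U\big)=\bar\rho^{\frac{m+n+3}{2}}\,\bar L^m_{2,\psi}U \qquad\text{in }X,
\end{equation*}
so $\bar L^m_{2,\psi}U=0$ in $X$ if and only if $-\Delta_+u-s(n-s)u=0$ in $X$. This disposes of the interior equations.

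Next I would match the boundary behaviour. Under $\mathrm{Spec}(-\Delta_+)\geq \tfrac{n^2}{4}-\gamma^2$ (which holds here since the conformal infinity is of non-negative Yamabe type, by \cite{Le1}), the solvability theory of \cite{MM} recalled in the Introduction says that for each $f\in C^\infty(M)$ there is a unique solution $u$ of $-\Delta_+u-s(n-s)u=0$ with $u=x^{n-s}F+x^sG$, $F,G\in C^\infty(\overline X)$, $F|_M=f$; conversely any such $u$ yields $U=\bar\rho^{s-n}u$ solving the weighted problem, and by uniqueness of both boundary value problems (cf. \cite{Ca,CC}) the correspondence $U\leftrightarrow u$ is a bijection on solutions. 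Inverting $\bar\rho=x+Ax^{1+2\gamma}+o(x^{1+2\gamma})$ to get $x=\bar\rho-A\bar\rho^{1+2\gamma}+o(\bar\rho^{1+2\gamma})$ and using $n-s+2\gamma=s$, $2s-n=2\gamma$ and $\gamma<1$ (so the smooth expansion of $F$ only contributes powers $\bar\rho^{2k}$, $k\geq1$, which are of higher order than $\bar\rho^{2\gamma}$), I would expand
\begin{equation*}
u=\bar\rho^{n-s}f+\bar\rho^{s}\big(G|_M-(n-s)Af\big)+o(\bar\rho^{s}),\qquad U=\bar\rho^{s-n}u=f+\bar\rho^{2\gamma}\big(G|_M-(n-s)Af\big)+o(\bar\rho^{2\gamma}).
\end{equation*}
In particular $U|_M=f$, so the boundary condition $U=f$ on $M$ matches $F=f$ on $M$, completing the equivalence of the two problems.

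Finally, for the ``Moreover'' identity, differentiating the last expansion gives $\bar\rho^m\partial_{\bar\rho}U=\bar\rho^{1-2\gamma}\big(2\gamma\,\bar\rho^{2\gamma-1}(G|_M-(n-s)Af)+o(\bar\rho^{2\gamma-1})\big)$, hence $\lim_{\bar\rho\to0}\bar\rho^m\partial_{\bar\rho}U=2\gamma\big(G|_M-(n-s)Af\big)$; multiplying by $\tfrac{d_\gamma}{2\gamma}$ and using $P^{\hat g}_{2\gamma}f=d_\gamma S(s)f=d_\gamma G|_M$ from \eqref{def.P} together with $n-s=\tfrac{n-2\gamma}{2}$ yields $\tfrac{d_\gamma}{2\gamma}\lim_{\bar\rho\to0}\bar\rho^m\partial_{\bar\rho}U=P^{\hat g}_{2\gamma}f-\tfrac{n-2\gamma}{2}d_\gamma Af$, as claimed. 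The main obstacle I anticipate is not the algebra above but the functional-analytic bookkeeping in the third step: verifying that the admissible class of solutions $U$ of the degenerate-elliptic weighted problem (weight $\bar\rho^{1-2\gamma}$, singular at $M$) is in bijection with the scattering solutions $u=x^{n-s}F+x^sG$ for fixed $f$, with no spurious solutions on either side — this rests on the uniqueness statements in \cite{MM} and \cite{Ca,CC} under the spectral hypothesis and on boundary regularity for such weighted operators.
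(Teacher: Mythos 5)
The paper states this lemma as a citation to Case--Chang \cite{CC} without supplying its own proof, so there is no ``paper's own proof'' to compare against line by line. Your argument, however, is exactly the mechanism behind the result in \cite{CC}: apply the conformal covariance \eqref{eq.L2} of $\bar L^m_{2,\psi}$ with interior conformal factor $\phi=\bar\rho$, note that for $\tilde\rho\equiv 1$ all $\bar\rho$-derivative terms in \eqref{def.J} drop, use the Einstein condition $R_{g_+}=-n(n+1)$ to identify $\tilde L^m_{2,\psi}$ with $-\Delta_+-s(n-s)$, and observe $\tfrac{m+n-1}{2}=n-s$ so that $u=\bar\rho^{n-s}U$ is precisely the conjugation in \eqref{eq.L2}. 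I checked the algebra: with $m=1-2\gamma$ you get $-\tfrac{(m+n-1)(n+1-m)}{4}=-(\tfrac{n^2}{4}-\gamma^2)=-s(n-s)$, and inverting $\bar\rho=x+Ax^{1+2\gamma}+o(x^{1+2\gamma})$ indeed yields $U=f+\bar\rho^{2\gamma}\big(G|_M-(n-s)Af\big)+o(\bar\rho^{2\gamma})$, whose weighted normal derivative gives the ``Moreover'' identity on multiplying by $d_\gamma/(2\gamma)$ and using $P^{\hat g}_{2\gamma}f=d_\gamma G|_M$. Two points you should make explicit rather than leave implicit: (i) the evenness of $F$ in $x$ (so that its Taylor expansion contributes only at order $x^2$, which dominates $x^{2\gamma}$ for $\gamma<1$) is a consequence of the even expansion of $g_x$ for a Poincar\'e--Einstein metric and is what lets you ignore $F-f$; and (ii) justifying term-by-term differentiation of the $\bar\rho$-expansion of $U$ requires the full polyhomogeneity of the scattering solution from \cite{MM}, not merely the two-term asymptotic. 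Both are standard and you correctly flag the functional-analytic bookkeeping as the only real work; with those caveats the proposal is correct and is the same route as the cited source.
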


\begin{lemma}[\cite{CC}]\label{lem.CC2}
Assume 
$$
\bar{\rho}=x+\frac{Q^{\hat{g}}_{2\gamma}}{d_\gamma}x^{1+2\gamma}+O(x^3).
$$
Given any $f\in C^{\infty}(M)$ and $U\in W^{1,2}(\overline{X},\bar{\rho}^m dV_{\bar{g}})$ such that  $U|_M =f$, then
\begin{equation}
\int_{M}fP^{\hat{g}}_{2\gamma}f dV_{\hat{g}}
\leq -\frac{d_\gamma}{2\gamma}\int_{X}\left(|\bar{\nabla} U|^2 +\frac{m+n-1}{2}\bar{J}_{\psi}^m U^2\right)\bar{\rho}^m dV_{\bar{g}}+\frac{n-2\gamma}{2}\int_{M}Q^{\hat{g}}_{2\gamma}f^2 dV_{\hat{g}}. 
\end{equation}
\end{lemma}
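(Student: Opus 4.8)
\emph{Proof proposal.} The plan follows the scheme of \cite{LQS} for Theorem \ref{thm.LQS}, with the conformal Laplacian replaced by the Case--Chang weighted GJMS operators of Section \ref{sec.2}, so that the boundary quantity to be estimated becomes the fractional $Q$-curvature of the representative induced by the distance function.

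\emph{Step 1: reduction to a boundary inequality.} Since $Ric_{g_+}=-ng_+$, the Bishop--Gromov comparison theorem gives at once the two right-hand inequalities of \eqref{eq.main}: the ratio $t\mapsto V(B_{g_+}(p,t))/V(B_{g_{\mathbb H}}(0,t))$ is nonincreasing (hence $\le 1$), and differentiating this monotone ratio yields $V(\partial B_{g_+}(p,t))/V(\partial B_{g_{\mathbb H}}(0,t))\le V(B_{g_+}(p,t))/V(B_{g_{\mathbb H}}(0,t))$. Both ratios are nonincreasing, so they share a common limit $L:=\lim_{t\to\infty}V(\partial B_{g_+}(p,t))/V(\partial B_{g_{\mathbb H}}(0,t))$, and it remains to show $L\ge\big(Y_{2\gamma}(M,[\hat g])/Y_{2\gamma}(\mathbb S^n,[g_{\mathbb S}])\big)^{n/2\gamma}$ (trivial if $Y_{2\gamma}(M,[\hat g])=0$, which is the only remaining case up to sign by the remark after the theorem). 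Because $g_+$ is conformally compact, the rescaled geodesic spheres $\sinh^{-2}\!t\cdot g_+|_{T\partial B_{g_+}(p,t)}$ converge, under the boundary-at-infinity identification of $M$ with the sphere of directions at $p$, to a metric $\hat g_\ast\in[\hat g]$ with $V(M,\hat g_\ast)=L\,|\mathbb S^n|$. Using $Y_{2\gamma}(M,[\hat g])=Y_{2\gamma}(M,[\hat g_\ast])$, the competitor $\varphi\equiv1$ in the definition of $Y_{2\gamma}(M,[\hat g_\ast])$, and the value of $Y_{2\gamma}(\mathbb S^n,[g_{\mathbb S}])=\tfrac{n-2\gamma}{2}Q^{g_{\mathbb S}}_{2\gamma}|\mathbb S^n|^{2\gamma/n}$ together with $L=V(M,\hat g_\ast)/|\mathbb S^n|$, the whole theorem reduces to the single estimate
\[
\int_M Q^{\hat g_\ast}_{2\gamma}\,dV_{\hat g_\ast}\ \le\ Q^{g_{\mathbb S}}_{2\gamma}\,V(M,\hat g_\ast).
\]

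\emph{Step 2: the distance-transplanted compactification.} Let $v_{\mathbb H}(t)$ be the radial solution on $\mathbb H^{n+1}$ of $v''+n\coth t\,v'+(\tfrac{n^2}{4}-\gamma^2)v=0$ normalized so that $v_{\mathbb H}\circ\mathrm{dist}_{\mathbb H}(0,\cdot)$ is the scattering potential of $(\mathbb H^{n+1},g_{\mathbb H})$ for the representative $g_{\mathbb S}$; one checks $v_{\mathbb H}>0$ and $v_{\mathbb H}'<0$ on $[0,\infty)$. Set $w:=v_{\mathbb H}\circ t$ and $\bar\rho:=w^{2/(n-2\gamma)}$ on $X$, with $\bar g:=\bar\rho^2g_+$. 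Then $\bar\rho$ is a boundary defining function with $\bar g|_{TM}=\hat g_\ast$, and in the model space $\mathbb H^{n+1}$ it coincides with the adapted defining function of Lemma \ref{lem.CC3} (the content of Section \ref{sec.4}). Two computations are needed. First, the coefficient $A$ of $x_\ast^{1+2\gamma}$ in the expansion of $\bar\rho$ in the geodesic normal defining function $x_\ast$ of $\hat g_\ast$ equals $Q^{g_{\mathbb S}}_{2\gamma}/d_\gamma$; this follows by matching the even asymptotic expansion of the Poincar\'e--Einstein metric in the $x_\ast$-gauge against the explicit hyperbolic data (the borderline $\gamma=\tfrac12$ handled via the evenness of the relation between $t$ and $x_\ast$). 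Second, and crucially, the weighted scalar curvature of $\bar g$ is nonpositive: since $|\nabla_{g_+}t|\equiv1$, every term in \eqref{def.J} other than the one containing $\Delta_{g_+}t$ is identical to its value in $\mathbb H^{n+1}$, and using $\Delta_{g_+}w=v_{\mathbb H}''(t)+\Delta_{g_+}t\cdot v_{\mathbb H}'(t)$ together with $\bar J^m_{\psi}\equiv0$ in the model, one gets the closed formula
\[
\bar J^m_{\psi}\ =\ -\,\tfrac{2}{n-2\gamma}\;v_{\mathbb H}^{-\frac{4}{n-2\gamma}-1}\,\big(\Delta_{g_+}t-n\coth t\big)\,v_{\mathbb H}'\ \le\ 0,\qquad m=1-2\gamma,
\]
by the Laplacian comparison $\Delta_{g_+}t\le n\coth t$ (valid since $Ric_{g_+}=-ng_+$) and $v_{\mathbb H}'<0$, with equality iff $(X,g_+)\cong\mathbb H^{n+1}$.

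\emph{Step 3: conclusion and expected obstacle.} Apply Lemma \ref{lem.CC1} with $(\hat g,\bar\rho)=(\hat g_\ast,\bar\rho)$, $f\equiv1$, and $U$ the solution of $\bar L^m_{2,\psi}U=0$, $U|_M=1$. Since $\bar L^m_{2,\psi}$ is conformal, via \eqref{eq.L2}, to the operator $-\bar\Delta_{\psi_{\mathrm{ad}}}$ of the adapted compactification of $\hat g_\ast$, it obeys the maximum principle, so $U>0$; and $\bar\Delta_{\psi}(U-1)=\tfrac{n-2\gamma}{2}\bar J^m_{\psi}U\le0$ with $(U-1)|_M=0$ forces $U\ge1$. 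From the expansion $U=1+c\,\bar\rho^{2\gamma}+O(\bar\rho^{2})$ near $M$ (valid since $2\gamma<2$), the bound $U\ge1$ gives $c\ge0$, hence $\lim_{\bar\rho\to0}\bar\rho^m\partial_{\bar\rho}U=2\gamma c\ge0$. Lemma \ref{lem.CC1} then reads $\tfrac{n-2\gamma}{2}\big(Q^{\hat g_\ast}_{2\gamma}-d_\gamma A\big)=\tfrac{d_\gamma}{2\gamma}\cdot2\gamma c$; since $A=Q^{g_{\mathbb S}}_{2\gamma}/d_\gamma$ and $d_\gamma<0$, this yields $Q^{\hat g_\ast}_{2\gamma}\le Q^{g_{\mathbb S}}_{2\gamma}$ pointwise on $M$, which integrated gives the estimate of Step 1 and, with Step 1, proves \eqref{eq.main}. (Equivalently, one may argue in integrated form via the weighted Green identity for $\bar\Delta_\psi$ and the convexity of the Dirichlet energy of $\bar L^m_{2,\psi}$, comparing $U$ with the competitor $U\equiv1$.) The genuine difficulty is analytic: $t$, hence $w$ and $\bar\rho$, is only Lipschitz across the cut locus $\mathrm{Cut}(p)$, so $\bar J^m_{\psi}$ acquires a nonpositive \emph{singular} distributional part there, and one must check that the maximum principle for $\bar L^m_{2,\psi}$, the boundary regularity of $U$, and the identity of Lemma \ref{lem.CC1} all survive in this reduced regularity --- this is the rôle of the cut-locus analysis of Section \ref{sec.3}. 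The nonpositivity of the singular part is in the favorable direction for every sign assertion above, but an exhaustion or approximation argument seems necessary to make the variational and maximum-principle steps rigorous; a secondary point is the uniform verification of $A=Q^{g_{\mathbb S}}_{2\gamma}/d_\gamma$ for all $\gamma\in(0,1)$.
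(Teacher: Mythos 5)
Your proposal does not prove the stated lemma: Steps 1--3 are a sketch of the proof of Theorem~\ref{thm.main} (Bishop--Gromov reduction, the distance-transplanted compactification, a maximum-principle argument at the cut locus), whereas the statement in question is Lemma~\ref{lem.CC2}, an energy inequality cited from Case--Chang~\cite{CC}. The volume ratios, the limit metric $\hat g_\ast$ on the sphere of directions at $p$, and the Laplacian comparison at the cut locus that you develop play no role in Lemma~\ref{lem.CC2}; they are downstream applications in Section~\ref{sec.5} that rely on Lemmas~\ref{lem.CC1}--\ref{lem.CC4} rather than deriving them.

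What Lemma~\ref{lem.CC2} actually asserts is a Dirichlet-principle type inequality: the scattering pairing $\int_M fP^{\hat g}_{2\gamma}f\,dV_{\hat g}$ is bounded by the weighted energy $-\tfrac{d_\gamma}{2\gamma}\int_X\bigl(|\bar\nabla U|^2+\tfrac{m+n-1}{2}\bar J^m_\psi U^2\bigr)\bar\rho^m\,dV_{\bar g}+\tfrac{n-2\gamma}{2}\int_M Q^{\hat g}_{2\gamma}f^2\,dV_{\hat g}$ for \emph{every} competitor $U\in W^{1,2}(\overline X,\bar\rho^m dV_{\bar g})$ with trace $f$, not just for the exact solution of $\bar L^m_{2,\psi}U=0$. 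The proof (in~\cite{CC}) integrates the boundary identity of Lemma~\ref{lem.CC1} against $f$, applies the weighted Green identity for $\bar L^m_{2,\psi}$ (justified because for $\gamma\in(0,1)$, $m=1-2\gamma\in(-1,1)$ and $\bar\rho^m$ is an $A_2$-Muckenhoupt weight, so $W^{1,2}(\overline X,\bar\rho^m dV_{\bar g})$ has well-defined traces), and then observes that among extensions with fixed trace the solution of $\bar L^m_{2,\psi}U=0$ minimizes the energy; the sign $d_\gamma<0$ converts that minimization into the stated upper bound. None of this appears in your write-up, so the argument, even if it were a valid sketch of the main theorem, leaves Lemma~\ref{lem.CC2} entirely unaddressed.
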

Note here for $\gamma\in (0,1)$, $d_{\gamma}<0$; see (\ref{def.P}). 

For any $\bar{g}=\bar{\rho}^2g_+$, $\bar{g}|_{TM}=\hat{g}$, define the fractional Yamabe functional by
\begin{equation}\label{def.I}
I_{2\gamma}(\overline{X},\bar{g};U)
=
\frac{-\frac{d_\gamma}{2\gamma}\int_{X}\left(|\bar{\nabla} U|^2 +\frac{m+n-1}{2}\bar{J}_{\psi}^m U^2\right)\bar{\rho}^m dV_{\bar{g}}+\frac{n-2\gamma}{2}\int_{M}Q^{\hat{g}}_{2\gamma}f^2 dV_{\hat{g}}}{\left(\int_M |f|^{\frac{2n}{n-2\gamma}}dV_{\hat{g}}\right)^{\frac{n-2\gamma}{n}}},
\end{equation}
where $U|_{M}=f$.
 Then Lemma \ref{lem.CC1} and Lemma  \ref{lem.CC2} imply that
 
 %%%%
 \begin{lemma} \label{lem.CC4}
 For $\gamma\in (0,1)$, 
$$
Y_{2\gamma}(M, [\hat{g}])
=\inf_{\tiny\begin{array}{c}U\in W^{1,2}(\overline{X},\bar{\rho}^m dV_{\bar{g}})\\ U|_{M}=f>0\end{array}}I_{2\gamma}(\overline{X},\bar{g};U). 
$$
\end{lemma}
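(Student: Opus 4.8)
The plan is to prove separately the two inequalities
$Y_{2\gamma}(M,[\hat{g}])\le \inf_{U}I_{2\gamma}(\overline{X},\bar{g};U)$
and
$\inf_{U}I_{2\gamma}(\overline{X},\bar{g};U)\le Y_{2\gamma}(M,[\hat{g}])$,
working throughout with a boundary defining function $\bar{\rho}$ normalized as in Lemma~\ref{lem.CC2}, namely $\bar{\rho}=x+\tfrac{Q^{\hat{g}}_{2\gamma}}{d_{\gamma}}x^{1+2\gamma}+O(x^{3})$, so that both Lemma~\ref{lem.CC1} (with $A=Q^{\hat{g}}_{2\gamma}/d_{\gamma}$) and Lemma~\ref{lem.CC2} are available; the infimum $\inf_{U}I_{2\gamma}(\overline{X},\bar{g};U)$ is independent of the choice of $\bar{\rho}$ among those with $\bar{g}|_{TM}=\hat{g}$, by the conformal covariance (\ref{eq.L2}), so this normalization entails no loss of generality. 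The first inequality is essentially Lemma~\ref{lem.CC2} itself: for any admissible $U$ with $U|_{M}=f>0$ that lemma bounds the numerator of $I_{2\gamma}(\overline{X},\bar{g};U)$ from below by $\int_{M}fP^{\hat{g}}_{2\gamma}f\,dV_{\hat{g}}$, so that $I_{2\gamma}(\overline{X},\bar{g};U)\ge \bigl(\int_{M}fP^{\hat{g}}_{2\gamma}f\,dV_{\hat{g}}\bigr)\bigl(\int_{M}f^{\frac{2n}{n-2\gamma}}\,dV_{\hat{g}}\bigr)^{-\frac{n-2\gamma}{n}}\ge Y_{2\gamma}(M,[\hat{g}])$ by the definition of the fractional Yamabe constant; taking the infimum over $U$ gives the claim (it suffices, by density, to let the trace range over positive $f\in C^{\infty}(M)$).

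For the reverse inequality, the key observation is that the inequality in Lemma~\ref{lem.CC2} is sharp, attained by the solution of the weighted conformal Laplace equation. Given $f\in C^{\infty}(M)$, $f>0$, let $U_{f}$ be the solution of $\bar{L}^{m}_{2,\psi}U_{f}=0$ in $X$, $U_{f}|_{M}=f$, produced by Lemma~\ref{lem.CC1}. To compute $I_{2\gamma}(\overline{X},\bar{g};U_{f})$ I would integrate by parts against the weight $\bar{\rho}^{m}=e^{-\psi}$: from $\bar{\nabla}(\bar{\rho}^{m})=-\bar{\rho}^{m}\bar{\nabla}\psi$ one has the pointwise identity $\bar{\rho}^{m}U_{f}\,\bar{\Delta}_{\psi}U_{f}=\operatorname{div}_{\bar{g}}(\bar{\rho}^{m}U_{f}\bar{\nabla}U_{f})-\bar{\rho}^{m}|\bar{\nabla}U_{f}|^{2}$; integrating over $\{\bar{\rho}>\varepsilon\}$, substituting $\bar{\Delta}_{\psi}U_{f}=\tfrac{m+n-1}{2}\bar{J}^{m}_{\psi}U_{f}$ from the equation, applying the divergence theorem, and letting $\varepsilon\to 0$ while identifying the boundary term through Lemma~\ref{lem.CC1} (which gives $\lim_{\bar{\rho}\to 0}\bar{\rho}^{m}\partial_{\bar{\rho}}U_{f}=\tfrac{2\gamma}{d_{\gamma}}\bigl(P^{\hat{g}}_{2\gamma}f-\tfrac{n-2\gamma}{2}d_{\gamma}Af\bigr)$), one arrives at
\[
-\frac{d_{\gamma}}{2\gamma}\int_{X}\Bigl(|\bar{\nabla}U_{f}|^{2}+\frac{m+n-1}{2}\bar{J}^{m}_{\psi}U_{f}^{2}\Bigr)\bar{\rho}^{m}\,dV_{\bar{g}}
=\int_{M}fP^{\hat{g}}_{2\gamma}f\,dV_{\hat{g}}-\frac{n-2\gamma}{2}\int_{M}Q^{\hat{g}}_{2\gamma}f^{2}\,dV_{\hat{g}},
\]
using $d_{\gamma}A=Q^{\hat{g}}_{2\gamma}$ in the last term. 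Thus the numerator of $I_{2\gamma}(\overline{X},\bar{g};U_{f})$ is exactly $\int_{M}fP^{\hat{g}}_{2\gamma}f\,dV_{\hat{g}}$, so $I_{2\gamma}(\overline{X},\bar{g};U_{f})=\bigl(\int_{M}fP^{\hat{g}}_{2\gamma}f\,dV_{\hat{g}}\bigr)\bigl(\int_{M}f^{\frac{2n}{n-2\gamma}}\,dV_{\hat{g}}\bigr)^{-\frac{n-2\gamma}{n}}$; taking the infimum over $f>0$ gives $\inf_{U}I_{2\gamma}(\overline{X},\bar{g};U)\le Y_{2\gamma}(M,[\hat{g}])$, and combining with the first inequality proves the lemma.

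Two points need care. First, one must verify that $U_{f}$ is an admissible competitor, i.e. $U_{f}\in W^{1,2}(\overline{X},\bar{\rho}^{m}\,dV_{\bar{g}})$ with $U_{f}|_{M}=f>0$: from $\bar{\rho}^{n-s}U_{f}=x^{\frac{n}{2}-\gamma}F+x^{\frac{n}{2}+\gamma}G$ with $F,G\in C^{\infty}(\overline{X})$, $F|_{M}=f$, and $\bar{\rho}\sim x$, one reads off $U_{f}=F+O(x^{2\gamma})$ and hence $|\bar{\nabla}U_{f}|^{2}\bar{\rho}^{m}=O(x^{2\gamma-1})+O(x^{1-2\gamma})$, which is integrable near $M$ exactly for $\gamma\in(0,1)$ (and $m=1-2\gamma>-1$ makes $\bar{\rho}^{m}\,dV_{\bar{g}}$ locally finite); the $\bar{J}^{m}_{\psi}$-term is controlled similarly from the decay of $\bar{J}^{m}_{\psi}$ forced by the chosen normalization of $\bar{\rho}$. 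Second, the integration by parts must be justified rigorously through the truncation at $\bar{\rho}=\varepsilon$ and a controlled passage to the limit, carefully tracking the sign $d_{\gamma}<0$ (so that $-d_{\gamma}/2\gamma>0$ and the quadratic terms enter with the correct sign). This last step — where the full asymptotic expansion of $U_{f}$ near $M$ and the scattering normalization of $P^{\hat{g}}_{2\gamma}$ are genuinely used — is the part I expect to be the main obstacle, though it is essentially routine given the analysis in \cite{CC}; everything else is bookkeeping.
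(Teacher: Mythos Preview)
Your proposal is correct and follows exactly the approach the paper indicates: the paper's entire proof is the sentence ``Then Lemma~\ref{lem.CC1} and Lemma~\ref{lem.CC2} imply that'', and you have spelled out precisely how---Lemma~\ref{lem.CC2} gives $Y_{2\gamma}\le \inf_U I_{2\gamma}$, while Lemma~\ref{lem.CC1} applied to the solution $U_f$ of $\bar L^m_{2,\psi}U_f=0$ (together with the weighted integration by parts you wrote) shows equality is attained in Lemma~\ref{lem.CC2} at $U_f$, giving the reverse inequality. Your remarks on admissibility of $U_f$ and on reducing to the normalization $A=Q^{\hat g}_{2\gamma}/d_\gamma$ via conformal covariance are appropriate refinements that the paper leaves implicit.
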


%%%%%%%%%%%%%%%%%%%%%%%%%%%%%%%%%%%%%%%%%%%%%%
\vspace{0.2in}
\section{Distance Function for $(X,g_+)$}\label{sec.3}

Suppose $(X^{n+1}, g_+)$  is a Poincar\'{e}-Einstein manifold with conformal infinity $(M,[\hat{g}])$. Assume $\hat{g}$ is a smooth boundary representative and $x$ is the corresponding geodesic normal defining function. Then $|dx|^2_{x^2g_+}\equiv 1$ in a neighborhood of the boundary. 
Extends $x$ to be a global smooth function, which is positive in the interior.
Define
$$
r=-\log\frac{x}{2}, \quad \Sigma_r=\{r=constant\}. 
$$
Fix any $p\in X$ and let
$$ 
t = dist_{g_+} (\cdot,p), \quad \Gamma_t=\{t=constant\}. 
$$
Here $t$ is smooth everywhere in $X\backslash C_p$, where $C_p$ is the cut locus. 
Notice that
$$C_p=Q_p\cup L_p\cup N_p,$$
where $Q_p$ is the set of conjugate points, $N_p$ is the normal cut locus and  $L_p$ is the rest of non-conjugate cut locus. We recall some basic properties for $C_p$ and the functions $r$ and $t$ here.  
%Let $$u=r-t.$$
%%%%
\begin{lemma}[\cite{Oz, IT}]\label{lem.OI}
	For $(X^{n+1},g_+)$ is a smooth complete Riemannian manifold and $p\in X$,
	\begin{itemize}
	\item $Q_p\cup L_p$ is a closed set of Hausdorff dimension no more than $n-1$; 
	\item $N_p$ consists of possibly countably many disjoint smooth hypersurfaces in $X$;
	\item for each $q\in N_p$, there exists an open neighborhood $\Omega$ of $q$ such that $\Omega \cap C_p=\Omega\cap N_p$ is a piece of smooth hypersurface in $X$.
	\end{itemize}
\end{lemma}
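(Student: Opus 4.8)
The plan is to follow the classical analysis of the cut locus, combining the structure theory of the non-conjugate cut locus (Ozols \cite{Oz}) with the Hausdorff-dimension estimates of Itoh--Tanaka \cite{IT}. First I would fix notation: write $U_pX=\{v\in T_pX:|v|_{g_+}=1\}$ and, for $v\in U_pX$, let $\rho(v)=\sup\{t>0:dist_{g_+}(p,\exp_p(tv))=t\}\in(0,+\infty]$ be the cut distance along $v$, so that $C_p=\exp_p(\widetilde C_p)$ with tangent cut locus $\widetilde C_p=\{\rho(v)v:v\in U_pX,\ \rho(v)<\infty\}$. For $q\in C_p$ let $A(q)\subset U_pX$ denote the set of initial directions of unit-speed minimizing geodesics from $p$ to $q$. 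Then $Q_p$ consists of those $q$ for which $\exp_p$ is singular at $dist_{g_+}(p,q)\,v$ for some $v\in A(q)$, while $N_p=\{q\in C_p\setminus Q_p:|A(q)|=2\}$ and $L_p=\{q\in C_p\setminus Q_p:|A(q)|\ge 3\}$, and $C_p$ is the disjoint union of these three sets.

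Next I would establish the local structure near $N_p$, which yields the second and third bullets and the closedness in the first. Fix $q_0\in N_p$ with $A(q_0)=\{v_1,v_2\}$ and $\ell=dist_{g_+}(p,q_0)$. Since $q_0$ is non-conjugate along either geodesic, $\exp_p$ restricts to a diffeomorphism of a neighborhood $W_i$ of $\ell v_i$ onto a common neighborhood $V$ of $q_0$; set $d_i=|(\exp_p|_{W_i})^{-1}(\cdot)|$ on $V$, a smooth function with $d_i(q_0)=\ell$ and $\nabla d_i(q_0)=\dot\gamma_{v_i}(\ell)$, the outgoing unit tangent. A compactness argument --- if a sequence $y_k\to q_0$ had minimizing geodesics with initial directions staying away from $v_1,v_2$, a limit direction would lie in $A(q_0)$, a contradiction --- shows that, after shrinking $V$, $dist_{g_+}(p,\cdot)=\min(d_1,d_2)$ on $V$, hence $C_p\cap V=\{d_1=d_2\}$. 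Because $\nabla(d_1-d_2)(q_0)=\dot\gamma_{v_1}(\ell)-\dot\gamma_{v_2}(\ell)\ne 0$ (equality would force $\gamma_{v_1}=\gamma_{v_2}$ by uniqueness of geodesics, contradicting $v_1\ne v_2$), the implicit function theorem shows $C_p\cap V=N_p\cap V$ is a smooth embedded hypersurface. Thus $N_p$ is an embedded smooth hypersurface in $X$; being second countable, it is a countable disjoint union of connected smooth hypersurfaces, and it is relatively open in the closed set $C_p$, so $Q_p\cup L_p=C_p\setminus N_p$ is closed.

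Finally I would prove $\dim_H(Q_p\cup L_p)\le n-1$. For $L_p$ I would use that $f=dist_{g_+}(p,\cdot)$ is locally semiconcave on $X\setminus\{p\}$, with superdifferential at $q\in C_p\setminus Q_p$ equal to the convex hull of $\{\dot\gamma_v(\ell):v\in A(q)\}$ ($\ell=dist_{g_+}(p,q)$); when $q\in L_p$ these are at least three distinct unit vectors, and since a line meets the unit sphere in at most two points they are not affinely collinear, so the convex hull has affine dimension at least $2$. Hence $L_p$ lies in the second-order singular set of a semiconcave function on an $(n+1)$-dimensional manifold, which by the standard stratification theory for such functions is contained in a countable union of Lipschitz submanifolds of dimension at most $n-1$.

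The main obstacle is the bound for the conjugate part $Q_p$, which is the real content of \cite{IT}. Conjugate cut points carrying two or more minimizing geodesics can be absorbed into the analysis above, but conjugate cut points admitting a single minimizing geodesic are genuinely delicate: there $f$ may even be differentiable, so the semiconcavity argument gives nothing. Itoh--Tanaka handle this by first proving that $\rho$ is locally Lipschitz, so that $\widetilde C_p$ is an $n$-dimensional Lipschitz graph over $U_pX$, and then showing, by a careful study of $\exp_p$ near first conjugate points, that the portion of $\widetilde C_p$ lying in the conjugate locus of $\exp_p$ has image under $\exp_p$ of Hausdorff dimension at most $n-1$. Granting this and combining with the estimate for $L_p$ completes the proof.
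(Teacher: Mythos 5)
The paper itself offers no argument for Lemma~\ref{lem.OI}; it is stated as a citation of \cite{Oz,IT}, so there is no in-paper proof to compare against. Your reconstruction is an accurate and essentially standard account of what those references establish: the decomposition $C_p=Q_p\sqcup N_p\sqcup L_p$, the local normal form $C_p\cap V=\{d_1=d_2\}$ near a point of $N_p$ with $\nabla(d_1-d_2)\neq 0$ (giving the second and third bullets and the relative openness of $N_p$, hence closedness of $Q_p\cup L_p$), the semiconcavity/stratification bound $\dim_H L_p\le (n+1)-2=n-1$ obtained because the superdifferential at a point of $L_p$ is the convex hull of at least three distinct unit vectors (which cannot be collinear) and therefore has affine dimension $\ge 2$, and finally the genuinely delicate conjugate piece $Q_p$ deferred to Itoh--Tanaka's Lipschitz analysis of the cut distance $\rho$ combined with their estimate on $\exp_p$ near first conjugate points. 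This is the same route the cited sources take; you have simply made the individual steps explicit rather than leaving them as a black-box citation.
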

%%%%

\begin{lemma}[\cite{LQS}]\label{lem.LQS}
For each $q\in N_p\cap \Gamma_t$, 
\begin{itemize}
\item the outward angle of the corner  on $\Gamma_t$ is less then $\pi$;
\item if $\Omega$ is a neighbourhood of $q$ such that $\Omega\cap C_p=\Omega\cap N_p$, then let $\nu^{\pm}$ be the two outward normal directions to $N_p$ from the interior of $U\backslash N_p$, then
$$
\partial_{\nu^+}t+\partial_{\nu^-} t\geq 0.
$$
\end{itemize}

\end{lemma}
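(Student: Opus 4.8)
The plan is to work locally near a point $q \in N_p \cap \Gamma_t$. By Lemma \ref{lem.OI}, in a neighborhood $\Omega$ of $q$ the cut locus $C_p \cap \Omega = N_p \cap \Omega$ is a single smooth hypersurface $\Sigma$, and $\Omega \setminus \Sigma$ has exactly two components $\Omega^\pm$ on each of which $t$ is smooth. The point $q$ is a normal (non-conjugate) cut point, so there are exactly two distinct minimizing geodesics from $p$ to $q$; extending the distance function separately through each component gives two smooth functions $t^\pm$ on $\Omega$ with $t^\pm(q) = t$, $t = \min(t^+, t^-)$, and $\Sigma = \{t^+ = t^-\}$ near $q$. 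Each $t^\pm$ satisfies $|\nabla t^\pm|_{g_+} \equiv 1$, being a Riemannian distance along a geodesic field.

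For the first bullet, I would compute the interior angle of the corner that $\Gamma_t = \{t = c\}$ makes along $\Sigma \cap \Gamma_t$. The level set $\{t = c\}$ near $q$ is $\{t^+ = c\} \cup \{t^- = c\}$, two smooth hypersurfaces meeting along $\Sigma$. The inward conormals are $\nabla t^+$ and $\nabla t^-$, both unit vectors. The outward corner angle on $\Gamma_t$ is strictly less than $\pi$ precisely when $\langle \nabla t^+, \nabla t^-\rangle_{g_+} > -1$, i.e. when the two unit gradients are not antipodal. But if $\nabla t^+(q) = -\nabla t^-(q)$, then the two minimizing geodesics from $p$ to $q$ would arrive at $q$ from exactly opposite directions, so their concatenation would be a smooth geodesic through $q$ of length $2t$; by minimality of each half this geodesic would itself be minimizing from $p$ to the point reached, contradicting the fact that $q$ is a cut point (one can push slightly past $q$ along $-\nabla t^-$ and still be within cut distance, so $q$ would lie in the interior of a minimizing segment — impossible). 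Hence the angle is strictly less than $\pi$.

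For the second bullet, let $\nu^\pm$ denote the outward normals to $\Sigma$ from the two sides, say $\nu^+$ points from $\Omega^+$ into $\Omega^-$ and $\nu^- = -\nu^+$. On $\Omega^+$ we have $t = t^+$ and on $\Omega^-$ we have $t = t^-$, so $\partial_{\nu^+} t = \partial_{\nu^+} t^+$ (one-sided derivative from $\Omega^+$) and $\partial_{\nu^-} t = \partial_{\nu^-} t^- = -\partial_{\nu^+} t^-$. Thus $\partial_{\nu^+} t + \partial_{\nu^-} t = \partial_{\nu^+}(t^+ - t^-)$. On $\Sigma$ we have $t^+ = t^-$, so $t^+ - t^- $ vanishes on $\Sigma$ and is $\le 0$ on $\Omega^+$ (since there $t^+ = t = \min \le t^-$) and $\ge 0$ on $\Omega^-$; therefore $t^+ - t^-$ is non-decreasing in the $\nu^+$ direction across $\Sigma$, giving $\partial_{\nu^+}(t^+ - t^-) \ge 0$, which is the claimed inequality. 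The main obstacle is making the ``push past $q$'' argument in the first bullet fully rigorous — ensuring the geodesic variation genuinely produces a shorter path or a contradiction with the cut-point characterization — but this is the standard first-variation / Klingenberg-type argument and goes through since $q$ is a non-conjugate cut point where exactly two minimizers meet.
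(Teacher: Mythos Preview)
The paper does not supply its own proof of this lemma; it is quoted from \cite{LQS}. Your argument for the second bullet is correct and is the standard one: writing $t=\min(t^+,t^-)$ with $t^\pm$ smooth on $\Omega$ and equal on $\Sigma=N_p\cap\Omega$, the one-sided normal derivatives combine to $\partial_{\nu^+}t+\partial_{\nu^-}t=\partial_{\nu^+}(t^+-t^-)\ge 0$, since $t^+-t^-$ vanishes on $\Sigma$ and is nonpositive on the $\Omega^+$ side, nonnegative on the $\Omega^-$ side.

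Your treatment of the first bullet, however, mis-identifies the angle. The gradients $\nabla t^\pm$ are the \emph{outward} unit normals to $\Gamma_t$ (they point in the direction of increasing distance, hence out of $B_t$), not inward conormals, and the exterior dihedral angle of the corner --- the opening angle of the wedge $\{t^+\ge c\}\cap\{t^-\ge c\}$ complementary to $B_t$ --- equals $\pi-\alpha$ with $\alpha=\angle(\nabla t^+,\nabla t^-)$. Thus ``outward angle $<\pi$'' is equivalent to $\alpha>0$, i.e.\ $\nabla t^+\neq\nabla t^-$, which follows immediately from uniqueness of geodesics: the two minimizers to $q$ are distinct, hence have distinct terminal tangent vectors. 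Your condition $\langle\nabla t^+,\nabla t^-\rangle>-1$ instead corresponds to $\alpha<\pi$. Not only is this the wrong inequality to check, it can actually fail at a normal cut point: on the flat cylinder $\mathbb{R}\times(\mathbb{R}/\mathbb{Z})$ with $p=(0,0)$ and $q=(0,\tfrac12)$, the cut locus is the smooth circle $\{y=\tfrac12\}\subset N_p$ and the two minimizing geodesics reach $q$ with exactly opposite tangents, so $\nabla t^+(q)=-\nabla t^-(q)$. Hence your concatenation argument cannot produce a contradiction in general, and the first bullet needs the (much simpler) argument above instead.
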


%%%%
\begin{lemma}[\cite{DJ, LQS}] \label{lem.u}
There exists a constant $C>0$ such that the following holds.
\begin{itemize}
\item 
Let $u=r-t$. Then
$$
|u|\leq C, \quad |du|_{g_+}\leq Ce^{-2r}.
$$

\item
At any point where $t$ is smooth,
$$
\left|\langle\nabla r,\nabla t\rangle_{g_+}-1\right| \leq Ce^{-2r}.
$$

\item
Let $X$ and $Y$ be unit vectors orthogonal to $\nabla r$ in $(X,g_+)$. Then
$$
\left|\nabla^2 r(X,Y)-\langle X,Y\rangle_{g_+}\right| \leq Ce^{-2r}. 
$$

\item 
For $t$ sufficiently large, the geodesic sphere 	$\Gamma_t$ is a Lipschitz graph over $M$. 
\end{itemize}
\end{lemma}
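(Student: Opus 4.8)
The plan is to establish the four bullets of Lemma \ref{lem.u} by comparing $(X,g_+)$ with the model hyperbolic metric near infinity, using the fact that $\rho^2 g_+$ is smooth up to the boundary so the curvature of $g_+$ is controlled by $Ce^{-2r}$ away from $-1$, plus the Einstein equation $Ric_{g_+}=-ng_+$. First I would record the key background estimate: since $x$ is the geodesic normal defining function for $\hat g$ and $\bar g = x^2 g_+ = dx^2 + g_x$ with $g_x$ smooth and even in $x$ up to high order, a direct computation of the curvature of $g_+ = x^{-2}\bar g$ shows that in the collar $x<\epsilon$ the sectional curvatures of $g_+$ differ from $-1$ by $O(x^2)=O(e^{-2r})$, and that $|dx|^2_{\bar g}\equiv 1$ there, so $|dr|_{g_+}=|d(-\log(x/2))|_{g_+}=x^{-1}|dx|_{x^{-2}\bar g}\cdot x = |dx|_{\bar g} = 1$ near the boundary; in particular $r$ is a smooth exhaustion whose level sets $\Sigma_r$ are exactly the constant-$x$ hypersurfaces, and the second fundamental form of $\Sigma_r$ with respect to $\nabla r$ is $\mathrm{Id}+O(e^{-2r})$ on the unit tangent cone to $\Sigma_r$. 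This is the ``$\nabla^2 r$ estimate'' of the third bullet, restricted to $X$ orthogonal to $\nabla r$ (and $\nabla^2 r(\nabla r,\cdot)=0$ since $|\nabla r|\equiv 1$ near the boundary), and on the compact part $\{x\ge \epsilon/2\}$ the estimate is trivially true after enlarging $C$ because $\nabla^2 r$ is bounded there and $e^{-2r}$ is bounded below.

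Next I would turn to $u=r-t$. The point is that $t=\mathrm{dist}_{g_+}(\cdot,p)$ and $r$ are both asymptotically like the hyperbolic radial coordinate, and their difference stays bounded. One clean way: pick a reference point $p_0$ in the collar and note that for the model hyperbolic metric the geodesic distance from a fixed point equals $-\log(x/2)$ up to a bounded (in fact convergent) error; by the curvature comparison $K[g_+]\le -1+Ce^{-2r}$ and $K[g_+]\ge -1-Ce^{-2r}$ one gets, via the Rauch/Bishop comparison applied along geodesics emanating from $p$, that $|t-r|$ is bounded — this is precisely the Dutta--Javaheri / Li--Qing--Shi argument (\cite{DJ,LQS}), and I would cite it rather than redo the ODE comparison. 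For the gradient estimate $|du|_{g_+}\le Ce^{-2r}$, write $du=dr-dt$; where $t$ is smooth, $|dt|_{g_+}=1$ and $|dr|_{g_+}=1$ near infinity, and $\langle\nabla r,\nabla t\rangle=1+O(e^{-2r})$ (the second bullet, proved below) forces $|\nabla r-\nabla t|^2 = 2-2\langle\nabla r,\nabla t\rangle = O(e^{-2r})$, giving the first bullet away from $C_p$; on $C_p$ one uses that $u$ is Lipschitz and $C_p$ has measure zero (Lemma \ref{lem.OI}) to extend the bound a.e., which is all that is needed in the applications.

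For the second bullet, $\langle\nabla r,\nabla t\rangle_{g_+} = \partial_{\nabla t} r = \frac{d}{ds}\big(r\circ\sigma\big)$ along the unit-speed minimizing geodesic $\sigma$ from $p$ to the point in question; I would estimate this by integrating the Hessian $\nabla^2 r$ along $\sigma$, starting from a point in the collar where the direction $\dot\sigma$ makes a small angle with $\nabla r$ (which holds once $t$ is large, since $\sigma$ must eventually leave every compact set and, by the convexity of $\Sigma_r$ coming from the $\nabla^2 r$ estimate, it crosses the level sets of $r$ transversally). Concretely, let $\theta(s)$ be the angle between $\dot\sigma(s)$ and $\nabla r$; then $\frac{d}{ds}\cos\theta = \nabla^2 r(\dot\sigma,\dot\sigma) - (\partial_{\dot\sigma}r)^2/|\nabla r|^2$-type terms $= \sin^2\theta + O(e^{-2r})$, a Riccati-type inequality that drives $\cos\theta\to 1$ with error $O(e^{-2r})$; this yields $\langle\nabla r,\nabla t\rangle = \cos\theta = 1+O(e^{-2r})$. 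The Lipschitz-graph statement in the fourth bullet then follows: once $\langle\nabla r,\nabla t\rangle\ge \tfrac12$ everywhere on $\Gamma_t$ (true for $t$ large), $r$ is strictly monotone along each $t$-integral curve and each $r$-ray meets $\Gamma_t$ exactly once, with the intersection depending Lipschitz-continuously on the boundary point because $\partial_r t$ is bounded away from $0$; identifying $X$ near infinity with $M\times(r_0,\infty)$ via the $\Sigma_r$-flow gives $\Gamma_t=\{(\xi, w(\xi,t)):\xi\in M\}$ with $w$ Lipschitz in $\xi$. The main obstacle is the interface between the ``clean'' asymptotic region, where everything is a smooth perturbation of hyperbolic space, and the cut locus $C_p$ together with the compact interior: the Hessian comparison and Riccati arguments are only valid where $t$ is smooth and $\sigma$ is minimizing, so one must argue (as in \cite{LQS}, using Lemma \ref{lem.OI} and Lemma \ref{lem.LQS}) that the bounds persist across $C_p$ in the barrier/a.e.\ sense and that a single constant $C$ can be chosen to absorb both the asymptotic errors and the crude interior bounds.
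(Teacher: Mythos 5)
The paper states this lemma purely as a citation to \cite{DJ,LQS} and provides no proof, so there is no in-paper argument to compare against; I evaluate your reconstruction on its own terms. Your overall decomposition is the right one and matches the cited sources: establish the Hessian estimate $\nabla^2 r(X,Y)-\langle X,Y\rangle_{g_+}=O(e^{-2r})$ from the normal form $g_+=x^{-2}(dx^2+g_x)$; run a Riccati comparison for $\cos\theta=\langle\nabla r,\dot\sigma\rangle$ along minimizing geodesics $\sigma$ from $p$ (using $\nabla^2r(\nabla r,\cdot)=0$ since $|\nabla r|\equiv1$ near the boundary) to obtain the angle bound $1-\langle\nabla r,\nabla t\rangle=O(e^{-2r})$; bound $u=r-t$ by integrating along $\nabla r$-curves; and deduce the Lipschitz-graph statement from transversality of $\Gamma_t$ to the $r$-foliation. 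You are also right to flag that the Riccati argument only applies where $t$ is smooth and $\sigma$ is minimizing, and that $C_p$ must be treated separately via Lemmas \ref{lem.OI} and \ref{lem.LQS}.

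There is, however, a genuine gap in your handling of the first bullet. From $|\nabla r|=|\nabla t|=1$ and the second bullet you derive $|\nabla r-\nabla t|^2=2-2\langle\nabla r,\nabla t\rangle=O(e^{-2r})$, and you then assert this "gives the first bullet." It does not: unsquaring yields $|du|_{g_+}=O(e^{-r})$, strictly weaker than the stated $|du|_{g_+}\le Ce^{-2r}$. Moreover the stronger bound is false in general: already in the model $\mathbb{H}^{n+1}$ with $p$ away from the center, $u=r-t$ tends to a non-constant function of the boundary angle, whose tangential differential on $\Sigma_r$ has $g_+$-norm of order $e^{-r}$, so $|du|_{g_+}\sim e^{-r}$ is sharp. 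What \cite{LQS} actually prove (their Lemma 2.6) is $|d(t-r)|^2=O(e^{-2r})$, i.e.\ a bound on the square; the statement in the present paper is almost certainly missing that square, and it is the squared bound that is used downstream (for instance in the proof of Lemma \ref{lem.w}, where one computes $\langle\nabla w,\nabla w\rangle_{g_+}=O(e^{-4\gamma r}+e^{-2r})$). You should have flagged this discrepancy and either proved the correct squared estimate or noted that the lemma as printed claims more than your Riccati-plus-Pythagoras argument delivers.
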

%%%%

%%%%
\begin{lemma}[\cite{SY}]\label{lem.SY}
At any point where $t$ is smooth, the Laplacian comparison theorem gives that
$$
\Delta_{+} t \leq  n\coth (t).
$$
\end{lemma}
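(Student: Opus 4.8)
The plan is to run the classical Laplacian comparison argument with model space $\mathbb{H}^{n+1}$, in which the distance function from any point satisfies $\Delta_{g_{\mathbb{H}}}t=n\coth t$ identically, and to exploit the fact that a Poincar\'{e}--Einstein metric is Einstein with $Ric_{g_+}=-ng_+$ (in particular $Ric_{g_+}\ge -ng_+$). Fix a point $q\in X\setminus C_p$ and let $\gamma:[0,\ell]\to X$ be the minimizing unit-speed geodesic from $p$ to $q$, with $\ell=t(q)$; since $q$ is not on the cut locus, $t$ is smooth near $q$, $|\nabla t|\equiv 1$ there, and $\nabla t=\gamma'$ along $\gamma$. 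Applying the Bochner formula to $t$ and using $\tfrac12\Delta_+|\nabla t|^2=0$ gives, along $\gamma$,
\[
\frac{d}{ds}\,(\Delta_+ t)=-\,|\nabla^2 t|^2-Ric_{g_+}(\gamma',\gamma').
\]

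I would then use two elementary observations. First, $\nabla^2 t$ annihilates $\nabla t$, so it is a symmetric endomorphism of the $n$-dimensional bundle $(\nabla t)^{\perp}\subset TX$, whence $|\nabla^2 t|^2\ge\tfrac1n(\Delta_+ t)^2$ by Cauchy--Schwarz. Second, the Einstein condition gives $Ric_{g_+}(\gamma',\gamma')=-n$. Setting $\varphi(s)=\tfrac1n\,\Delta_+ t(\gamma(s))$, the displayed identity becomes the Riccati differential inequality $\varphi'\le 1-\varphi^2$. Since $\coth s$ is the solution of $\psi'=1-\psi^2$ blowing up at $s=0$, and since the Euclidean behaviour of the distance function near $p$ gives $\Delta_+ t=n/s+O(s)$, i.e.\ $\varphi(s)=\coth s+O(s)$ as $s\to 0^+$, a standard ODE comparison yields $\varphi\le\coth s$ on $(0,\ell]$: setting $w=\varphi-\coth s$ one checks $w'\le-(\varphi+\coth s)w$, so $w$ times the integrating factor $\exp\!\big(\int(\varphi+\coth s)\big)$ is nonincreasing, while the initial data force this quantity to tend to $\le 0$ as the base point tends to $0$. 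Evaluating at $s=\ell$ gives $\Delta_+ t(q)\le n\coth\ell=n\coth t(q)$, as claimed.

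An alternative that sidesteps the small-$s$ discussion is the index-form route: for $v\perp\gamma'(\ell)$ with $|v|=1$, $\nabla^2 t(v,v)$ at $q$ equals $I_\ell(J_v,J_v)$ for the Jacobi field $J_v$ with $J_v(0)=0$, $J_v(\ell)=v$, and the index lemma gives $I_\ell(J_v,J_v)\le I_\ell(V,V)$ for the comparison field $V(s)=\tfrac{\sinh s}{\sinh\ell}E(s)$, $E$ parallel with $E(\ell)=v$. Summing over an orthonormal basis of $(\gamma'(\ell))^{\perp}$ and inserting $Ric_{g_+}(\gamma',\gamma')=-n$ collapses the resulting integral to $\tfrac{n}{\sinh^2\ell}\int_0^\ell\cosh(2s)\,ds=n\coth\ell$. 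Either way the argument is essentially routine; the only point requiring a little care is the justification of the Riccati comparison down to $s=0$ in the first approach (equivalently, the correct boundary data in the index-form computation), which is handled by the well-known asymptotics of $\Delta_+ t$ near $p$.
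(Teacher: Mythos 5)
Your proof is correct: it is precisely the classical Laplacian comparison argument under the Ricci lower bound $Ric_{g_+}\ge -n g_+$ supplied by the Einstein condition, run via Bochner and a Riccati ODE comparison against the model $\coth$, with the index-form variant as a standard alternative. The paper gives no proof of its own and merely cites [SY]; your argument is the same one found there, so there is nothing to reconcile.
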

%%%%

%%%%%%%%%%%%%%%%%%%%%%%%%%%%%%%%%%%%%%%%%%%%%%

\vspace{0.2in}
\section{Model Space $\mathbb{H}^{n+1}$}\label{sec.4}

\subsection{Ball Model}
The hyperbolic space in ball model is given by
$$
(\mathbb{H}^{n+1}, g_{\mathbb{H}} )=
\left(\mathbb{B}^{n+1}, \frac{4dz^2}{(1-|z|^2 )^2}\right)
$$
with conformal infinity $(\mathbb{S}^n , [g_{\mathbb{S}}])$. 
Take the round metric $g_{\mathbb{S}}$ as the boundary representative.
Then the corresponding geodesic normal defining function is
$$
x=\frac{2(1-|z|)}{1+|z|}, 
$$ 
and away from the center
$$
g_{\mathbb{H}} = x^{-2}\left[dx^2 + \left(1-\frac{x^2}{4}\right)^2g_{\mathbb{S}}\right].
$$
%Here $x$ is not smooth at the center $0$ and we can replace it by a smooth one in an arbitrarily  small neigborhood of $0$ if necessary. 
The distance function $t=dist_{g_{\mathbb{H}}}(\cdot, 0)$ is given by
\[ t=\log\frac{1+|z|}{1-|z|}.\]
It follows that
\[ r=-\log\frac{x}{2}=t.\]

\subsection{Adapted boundary defining function}
Notice that $\mathrm{Spec}(-\Delta_{\mathbb{H}})\geq \frac{n^2}{4}$. 
Take $\gamma\in (0,1)$.
Let $v$ be the unique solution satisfies the following equation:
\begin{equation*}
-\Delta_{\mathbb{H}} v -\left(\frac{n^2}{4}-\gamma^2\right) v=0, \quad x^{\gamma-\frac{n}{2}}v|_{\mathbb{S}^n}=1.
\end{equation*}
Moreover $v>0$ and takes the form
\begin{equation}\label{eq.phi}
v=x^{\frac{n}{2}-\gamma}\left(1+O(x^2)\right) 
+ x^{\frac{n}{2}+\gamma}\left(\frac{n-2\gamma}{2d_\gamma}Q^{g_{\mathbb{S}}}_{2\gamma}+O(x^2)\right), \quad \textrm{as}\ x\rightarrow 0.
\end{equation}
By the round symmetry of equation  and uniqueness of solution, there is a one variable function $\Phi(t)$ such that 
\begin{equation}\label{eq.Phi}
	v=\Phi(t). 
\end{equation}
Therefore
$$
\Phi(t)=(2e^{-t})^{\frac{n}{2}-\gamma}\left(1+\frac{n-2\gamma}{2d_\gamma}Q_{2\gamma}^{g_{\mathbb{S}}}(2e^{-t})^{2\gamma}+O(e^{-2t})\right), \quad \textrm{as}\ t\rightarrow +\infty. 
$$

%%%
\begin{lemma}\label{lem.Phi}
The function $\Phi$ satisfies
$$
\frac{d\Phi}{dt}<0, \quad \forall\ t\in (0,+\infty). 
$$
\end{lemma}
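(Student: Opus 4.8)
The plan is to exploit the fact that $v = \Phi(t)$ solves an ordinary differential equation in $t$ alone, obtained by feeding the ansatz $v = \Phi(t)$ into $-\Delta_{\mathbb{H}} v - (\tfrac{n^2}{4}-\gamma^2)v = 0$. Since in the ball model $t$ is the hyperbolic distance from the origin and the geodesic spheres $\Gamma_t$ are round $n$-spheres of area $|\mathbb{S}^n|\sinh^n t$, the radial Laplacian is $\Delta_{\mathbb{H}} = \partial_t^2 + n\coth(t)\,\partial_t$ on functions of $t$. Thus $\Phi$ satisfies
\begin{equation*}
\Phi'' + n\coth(t)\,\Phi' + \left(\tfrac{n^2}{4}-\gamma^2\right)\Phi = 0,\qquad t\in(0,+\infty),
\end{equation*}
with $\Phi>0$, $\Phi$ smooth at $t=0$ (hence $\Phi'(0)=0$), and $\Phi(t)\to 0$ as $t\to+\infty$ with the precise leading behaviour $\Phi(t)\sim (2e^{-t})^{\frac n2-\gamma}$ recorded in the excerpt.

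First I would record the boundary data: $\Phi(0)>0$, $\Phi'(0)=0$, and $\Phi(t)\to 0^+$ as $t\to\infty$. Next I would argue $\Phi'<0$ on all of $(0,\infty)$ by a maximum-principle / first-zero argument applied to $\Phi$ itself. Rewrite the ODE in the self-adjoint form $(\sinh^n t\,\Phi')' = -(\tfrac{n^2}{4}-\gamma^2)\sinh^n t\,\Phi$. Since $\tfrac{n^2}{4}-\gamma^2>0$ and $\Phi>0$, the right-hand side is strictly negative, so $w(t):=\sinh^n t\,\Phi'(t)$ is strictly decreasing on $(0,\infty)$. Because $\Phi$ is smooth at the origin, $w(t)\to 0$ as $t\to 0^+$; hence $w(t)<0$ for all $t>0$, i.e. $\Phi'(t)<0$ for all $t\in(0,+\infty)$, which is exactly the claim.

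The only genuine subtlety — the step I expect to be the main obstacle — is justifying that $\Phi$ extends smoothly (or at least $C^1$) up to $t=0$ with $\Phi'(0)=0$, so that the limit $w(0^+)=0$ is legitimate and one is not secretly using the vanishing decay at $+\infty$ (which would instead only give $w<0$ near infinity). This follows because $v$ is the unique solution of an elliptic equation on the smooth manifold $\mathbb{H}^{n+1}$ with the prescribed scattering data, hence $v\in C^\infty$ in the interior, in particular near the origin $0$; and by the uniqueness together with the rotational symmetry of both the equation and the data, $v$ is radial, so $\Phi(t)=v$ is a smooth even function of the Cartesian coordinates near $0$, forcing $\Phi'(0)=0$. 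Alternatively, one can bypass regularity at $0$ entirely: the indicial roots of the ODE at $t=0$ are $0$ and $1-n$, and the solution with a nonzero $\sinh^{1-n}t$ component blows up at the origin, which is incompatible with $v$ being a bona fide function on $\mathbb{H}^{n+1}$; so $\Phi$ is the regular branch, giving $w(0^+)=0$ and then $\Phi'<0$ on $(0,\infty)$ as above.
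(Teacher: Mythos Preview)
Your proof is correct and follows essentially the same approach as the paper: both arguments write the radial ODE in self-adjoint form, observe that the flux function ($w(t)=\sinh^n t\,\Phi'(t)$ in your variable, $h(x)=x^{-n+1}(1-x^2/4)^n\partial_x v$ in the paper's variable $x=2e^{-t}$, with $w=-h$) is strictly monotone because $\Phi>0$, and use smoothness of $v$ at the center to get the vanishing limit there, forcing $\Phi'<0$. Your discussion of why $\Phi'(0)=0$ is, if anything, more careful than the paper's.
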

\begin{proof}
First the Laplace-Beltrami operator of  $g_{\mathbb{H}}$ is given by
$$
\Delta_{\mathbb{H}}= x^{n+1}\left(1-\frac{x^2}{4}\right)^{-n}\partial_x \left[x^{-n+1}\left(1-\frac{x^2}{4}\right)^{n}\partial_x \right]+x^{2}\left(1-\frac{x^2}{4}\right)^{-2}\Delta_{\mathbb{S}}, 
$$
where $\Delta_{\mathbb{S}}$ is the Laplace-Beltrami operator of round metric $g_{\mathbb{S}}$ on $\mathbb{S}^n$. 
Since  $\Delta_{\mathbb{S}}v=0$, we have
\begin{equation}
x^{n+1}\left(1-\frac{x^2}{4}\right)^{-n}\partial_x \left[x^{-n+1}\left(1-\frac{x^2}{4}\right)^{n}\partial_x v\right]+\left(\frac{n^2}{4}-\gamma^2\right)v=0.
\end{equation}
Let
$$
h(x):=x^{-n+1}\left(1-\frac{x^2}{4}\right)^{n}\partial_x  v .
$$
Notice that $v>0$ implies that  $\partial_xh<0$  for $x\in(0,2)$.
Since $n\geq 3$ and $v$ is a smooth function of $z$ near the center, it is easy to check that $\lim_{x\rightarrow 2}h(x)=0$. 
Hence $h(x)>0$, i.e., $\partial_x v>0$, for all $x\in(0,2)$. Sicne $x=2e^{-t}$, we finish the proof. 
\end{proof}
%%%

%%%%%%%%%%%%%%%%%%%%%%%%%%%%%%%%%%%%%%%%%%%%%
\vspace{0.2in}
\section{Proof of Theorem \ref{thm.main}}\label{sec.5}
Let $(X,g_+)$ be a Poincar\'{e}-Einstein manifold with conformal infinity $(M, [\hat{g}])$ of nonnegative Yamabe type.  First by \cite{Le1}, $Y_2(M, [\hat{g}])\geq 0$ implies that $\mathrm{Spec}(-\Delta_+)\geq \frac{n^2}{4}$. 
We fix some notations as follows. 
\begin{itemize}
\item 
Let $\hat{g}$ be a smooth boundary representative and $\hat{R}$ its scalar curvature. Set $\hat{J}=\frac{1}{2(n-1)}\hat{R}.$
\item
	Let $x$ be the geodesic normal defining function corresponding to $\hat{g}$ and $r=-\log \frac{x}{2}$. Define 
$$\Sigma_{r_0}=\{r=r_0\}\quad \textrm{and}\quad X_{r_0}=\{r<r_0\},\quad \overline{X}_{r_0}=\{r\leq r_0\}. $$
Then for $r_0$ sufficiently large,  $\Sigma_r$ $(r_0\leq r<+\infty)$ provide a smooth  foliation near the boundary. 

\item 
Fix $p\in X$ and let $t=dist_{g_+}(\cdot, p)$ be the distance function. 
Define
$$\Gamma_{t_0}=\{t=t_0\}\quad \textrm{and}\quad B_{t_0}=\{t< t_0\}, \quad \overline{B}_{t_0}=\{t\leq t_0\}. $$
Then for $t_0$ sufficiently large, $\Gamma_t$ $(t_0 \leq t < +\infty)$ provide a Lipschitz  foliation near the boundary. 

%\item Denote $u=r-t$. 

 \item 
Let  $\gamma\in (0,1)$ be fixed.
% and denote $Q_{2\gamma}^{\hat{g}}=Q_{2\gamma}$ for simplicity. 
\end{itemize}

\subsection{Two Compactifications of $g_+$. }
We introduce two types of compactifications for $g_+$, which coincide for the model case. 
\begin{itemize}
\item (\textbf{Type I})
Let $\bar{g}=4e^{-2\bar{r}}g_+=\bar{\rho}^2g_+$, where $\bar{r}=-\log\frac{\bar{\rho}}{2}$, $\bar{\rho}=v^{\frac{2}{n-2\gamma}}$ and $v$ satsfies the equation:
$$ 
-\Delta_+ v-\left(\frac{n^2  }{4}-\gamma^2\right)v=0,\quad 
x^{\gamma-\frac{n}{2}}v|_{M}=1.
$$
Then by \cite{MM} and \cite{JS}, 
\begin{equation}\label{asy.v}
v=x^{\frac{n}{2}-\gamma}F+x^{\frac{n}{2}+\gamma}G, \quad F,G\in C^{\infty}(\overline{X}), \quad F|_{M}=1. 
\end{equation}
Moreover,  $v$ and  $\bar{\rho}$ have the asymptotic expansions as follows: 
$$
\begin{aligned}
v=&\ 
x^{\frac{n-2\gamma}{2}}
\left[1+x^{2\gamma} \left(\frac{n-2\gamma}{2d_{\gamma}}Q^{\hat{g}}_{2\gamma}\right) 
+x^2 \left(\frac{n-2\gamma}{8(1-\gamma)}\hat{J}\right) +O(x^{3})
\right], 
\\
\bar{\rho}=&\ 
x\left[1+x^{2\gamma}\left(\frac{1}{d_\gamma}Q^{\hat{g}}_{2\gamma}\right) +x^2 \left( \frac{1}{4(1-\gamma)}\hat{J}\right) +O(x^{3})\right], 
\end{aligned}
$$
which implies that 
\begin{equation}\label{asy.r}
	\bar{r} =r-\log\left(1+\frac{2^{2\gamma}}{d_\gamma}Q^{\hat{g}}_{2\gamma}e^{-2\gamma r}+O(e^{-2r})\right).
\end{equation}
Let
$$ 
\bar{g}_t= \bar{g}|_{\Gamma_t},\quad \bar{g}_r =\bar{g}|_{\Sigma_r}.
$$
Denote $\bar{\nabla}$ and $\bar{\Delta}$ the covariant derivative and Laplacian-Beltrami of $\bar{g}$; 
$\bar{J}=\frac{1}{2n}\bar{R}$ and $\bar{J}_{\psi}^m$ the scalar curvature and weighted scalar curvature of $\bar{g}$; 
$\bar{\Pi}_{ij}$ the second fundamental form w.r.t. outward normal on $\Sigma_r$;
and $\bar{H}_r=\mathrm{tr}_{\bar{g}_r}\bar{\Pi}$ the mean curvautre of $\Sigma_r$ in $(\overline{X}, \bar{g})$. 
In particular, 
$$
\Sigma_{\infty}=M, \quad \bar{g}_{\infty}=\hat{g}.
$$

\item (\textbf{Type II})
Let $\tilde{g}=4e^{-2\tilde{t}}g_+=\tilde{\rho}^2g_+$, where $\tilde{t}=-\log\left(\frac{\tilde{\rho}}{2}\right)$, $\tilde{\rho}=\Phi^{\frac{2}{n-2\gamma}}(t)$ and $\Phi(t)$ is the one variable function defined in  (\ref{eq.Phi}).  
Then $\tilde{t}$ has the following asymptotic expansion
\begin{equation}\label{asy.t}
	\tilde{t}=t-\log\left(1+ \frac{2^{2\gamma}}{d_\gamma}Q_{2\gamma}^{g_{\mathbb{S}}}e^{-2\gamma t}+O(e^{-2t})\right).
\end{equation}
Let 
$$ 
\tilde{g}_t= \tilde{g}|_{\Gamma_t},\quad \tilde{g}_r=\tilde{g}|_{\Sigma_r}.
$$
Denote $\tilde{\nabla}$ and $\tilde{\Delta}$ the covariant derivative and Laplacian-Beltrami of $\tilde{g}$; $\tilde{J}=\frac{1}{2n}\tilde{R}$ and  $\tilde{J}_{\psi}^m$ the scalar curvautre and weighted scalar curvautre of $\tilde{g}$; 
$\tilde{\Pi}_{ij}$ the second fundamental form w.r.t. outward normal on $\Sigma_r$; and $\tilde{H}_r=\mathrm{tr}_{\tilde{g}_r}\tilde{\Pi}$ the mean curvature of $\Sigma_r$ in $(\overline{X}, \tilde{g})$. 
\end{itemize}

In our setting, $\tilde{t}$ is not  smooth everywhere, but only depending on $t$. So the analysis at non-smooth points is essentially the same as $t$. However, $\bar{r}$ is not only depending on $r$, but smooth everywhere. So the asymptotic analysis of $\bar{r}$ follows from the global regularity easily.

\subsection{The conformal factor from $\bar{g}$ to $\tilde{g}$}
Let $w=\bar{r}-\tilde{t}$ and $\phi=e^{\frac{n-2\gamma}{2}w}$. Then 
$$
\tilde{g} = \phi^{\frac{4}{n-2\gamma}}\bar{g}. 
$$
We first show that the function $w$  is uniformly bounded in the following sense. 

%%%%
\begin{lemma}\label{lem.w}
There exists a constant $C$ such that $|w|\leq C$ and 
\begin{equation}
|\bar{\nabla}w|^2_{\bar{g}} \leq C\left(e^{(2-4\gamma)r}+1\right), \quad a.e.\ \textrm{in $X$}.
\end{equation}
\end{lemma}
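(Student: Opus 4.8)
The plan is to compare the two defining functions $\bar{\rho}$ and $\tilde{\rho}$ by going back to what they compactify. Both are built from solutions of the same Poisson equation $-\Delta_+ v - (\frac{n^2}{4}-\gamma^2)v = 0$: in the Type I case from the genuine eigenfunction $v$ on $(X,g_+)$ normalized by $x^{\gamma-n/2}v|_M = 1$, and in the Type II case from the model function $\Phi(t)$, which (by Lemma 4.2 and its preceding discussion) is the corresponding object on $\mathbb{H}^{n+1}$ expressed through the $g_+$-distance $t$. Since $w = \bar{r} - \tilde{t} = \frac{2}{n-2\gamma}\log(\tilde{\rho}/\bar{\rho})^{(n-2\gamma)/2}\cdot(\text{const})$, more precisely $e^{-(n-2\gamma)w/2} = \bar{\rho}^{(n-2\gamma)/2}/\tilde{\rho}^{(n-2\gamma)/2}$ up to the factor $2$ convention, controlling $w$ amounts to a two-sided bound on the ratio $v/\Phi(t)$ away from $0$ and $\infty$.

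First I would establish the $L^\infty$ bound $|w|\le C$. Write $u = r - t$, which by Lemma 3.4 satisfies $|u|\le C$ and $|du|_{g_+}\le Ce^{-2r}$; combining the asymptotic expansion \eqref{asy.r} for $\bar{r}$ in terms of $r$ and \eqref{asy.t} for $\tilde{t}$ in terms of $t$, one sees that near the boundary $w = \bar{r}-\tilde{t} = (r-t) - \log\bigl(1 + \tfrac{2^{2\gamma}}{d_\gamma}(Q^{\hat g}_{2\gamma}e^{-2\gamma r} - Q^{g_{\mathbb S}}_{2\gamma}e^{-2\gamma t}) + O(e^{-2r})\bigr)$, which is bounded because $u$ is bounded and the logarithmic correction terms tend to zero. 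Away from the boundary $\bar{r}$ and $\tilde{t}$ are both proper exhaustions comparable to $r$ (equivalently to $t$, by Lemma 3.4), so on the compact region $\{r\le r_0\}$ both differ from $r$ by a bounded amount; hence $|w|\le C$ globally. The point where one must be slightly careful is that $\tilde{t}$ is only Lipschitz (it inherits the non-smoothness of $t$), but since $\tilde{t}$ is a function of $t$ alone this causes no trouble for an $L^\infty$ estimate.

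Next, the gradient estimate. Since $\bar{g} = \bar{\rho}^2 g_+ = 4e^{-2\bar{r}}g_+$ we have $|\bar\nabla w|^2_{\bar g} = \tfrac14 e^{2\bar r}|\nabla w|^2_{g_+}$, and $e^{2\bar r}$ is comparable to $e^{2r}$ by the $L^\infty$ bound just proved. So it suffices to show $|\nabla w|^2_{g_+}\le C(e^{(-4\gamma-2)r}+e^{-4r})$, i.e. $|\nabla w|_{g_+}\le C(e^{-(2\gamma+1)r}+e^{-2r})$. Now $\nabla w = \nabla\bar r - \nabla\tilde t$. The term $\nabla\bar r$ is smooth; from the expansion \eqref{asy.r}, $\bar r - r = -\log(1 + \tfrac{2^{2\gamma}}{d_\gamma}Q^{\hat g}_{2\gamma}e^{-2\gamma r} + O(e^{-2r}))$ is a smooth function of position whose gradient in $g_+$ is $O(e^{-2\gamma r})\cdot|\nabla r|_{g_+} + \ldots$; since $|\nabla r|_{g_+}$ is bounded (indeed $\to 1$) this is $O(e^{-2\gamma r})$ — wait, that is not yet small enough, so one must use that the $e^{-2\gamma r}$ coefficient is (asymptotically) constant, so its derivative brings down an extra factor from differentiating $e^{-2\gamma r}$, giving genuinely $O(e^{-2\gamma r})$; the same remark applies to $\tilde t - t$. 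Meanwhile $\nabla(r-t) = \nabla u$ with $|\nabla u|_{g_+}\le Ce^{-2r}$ by Lemma 3.4. Assembling, $|\nabla w|_{g_+} \le |\nabla u|_{g_+} + |\nabla(\bar r - r)|_{g_+} + |\nabla(\tilde t - t)|_{g_+} \le C(e^{-2r} + e^{-2\gamma r})$, and since $2\gamma < 2$ the dominant term is $e^{-2\gamma r}$; squaring and multiplying by $e^{2r}$ gives $|\bar\nabla w|^2_{\bar g}\le C(e^{(2-4\gamma)r} + e^{-2r})\le C(e^{(2-4\gamma)r}+1)$ as claimed (the "$+1$" absorbs the subdominant piece on the compact core where $r$ is bounded).

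The main obstacle I anticipate is making the "$O(e^{-2r})$" error terms in \eqref{asy.v}, \eqref{asy.r}, \eqref{asy.t} genuinely \emph{differentiable} estimates rather than pointwise ones: one needs that differentiating these expansions in the $g_+$-metric does not lose decay. This is handled by the smoothness of $\bar\rho$ (guaranteed by \cite{MM}, \cite{JS} under the smooth-compactification hypothesis, which is exactly why that hypothesis is imposed), so that $F, G \in C^\infty(\overline X)$ and term-by-term differentiation of the asymptotic series is legitimate, together with the fact that $e^{-r}$ is itself (comparable to) a smooth defining function so its $g_+$-gradient is uniformly bounded. The Lipschitz-only regularity of $\tilde t$ enters only through $t$, and Lemma 3.4 already packages exactly the derivative bound $|du|_{g_+}\le Ce^{-2r}$ needed there; the almost-everywhere qualifier in the statement accounts for the cut locus $C_p$, which has measure zero by Lemma 3.1.
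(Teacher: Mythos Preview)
Your approach is correct and essentially the same as the paper's: both rely on Lemma~\ref{lem.u} for the piece $u=r-t$ together with the expansions \eqref{asy.r}--\eqref{asy.t} to control $w$, and both invoke the smoothness of $F,G$ from \cite{MM,JS} to justify differentiating the $O(\cdot)$ remainders. The only organizational difference is that the paper expands $|\nabla w|^2_{g_+}=|\nabla\bar r|^2-2\langle\nabla\bar r,\nabla\tilde t\rangle+|\nabla\tilde t|^2$ and checks that the order-$e^{-2\gamma r}$ contributions cancel, whereas you write $w=(\bar r-r)+u-(\tilde t-t)$ and bound each gradient separately by the triangle inequality; your route is slightly cleaner since it avoids tracking the precise coefficients needed for that cancellation. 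One small slip: the intermediate claim ``it suffices to show $|\nabla w|^2_{g_+}\le C(e^{(-4\gamma-2)r}+e^{-4r})$'' is mis-stated (you need only $|\nabla w|^2_{g_+}\le C(e^{-4\gamma r}+e^{-2r})$), but what you actually prove, $|\nabla w|_{g_+}\le C(e^{-2\gamma r}+e^{-2r})$, gives $|\nabla w|^2_{g_+}\le Ce^{-4\gamma r}$ near infinity and that is exactly what is required.
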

%%%%

\begin{proof}
Firstly notice that $F,G\in C^{\infty}(\overline{X})$ and $F|_{M}=1$ in (\ref{asy.v}) implies that 
$$|\nabla G|^2_{g_+}=O(e^{-2r}), \quad |\nabla F|^2_{g_+}= O(e^{-4r}).$$
Then by  (\ref{asy.r}) (\ref{asy.t}) and Lemma \ref{lem.u}, we have that $|w|\leq C$ and
$$
\begin{aligned}
|\nabla \bar{r}|^{2}_{g_+} &=
1+\frac{2^{2\gamma}4\gamma}{d_\gamma}Q^{\hat{g}}_{2\gamma}e^{-2\gamma r} +O(e^{-2r}+e^{-4\gamma r}),
\\
|\nabla \tilde{t}|^{2}_{g_+} &=
1+\frac{2^{2\gamma}2\gamma}{d_\gamma}Q_{2\gamma}^{g_{\mathbb{S}}}e^{-2\gamma t}  +O(e^{-2t}+e^{-4\gamma t}),\quad a.e., 
\\
\langle\nabla \tilde{t},\nabla \bar{r}\rangle_{g_+} &=
1+\frac{2^{2\gamma}2\gamma}{d_\gamma}Q^{\hat{g}}_{2\gamma}e^{-2\gamma r}+
\frac{2^{2\gamma}2\gamma}{d_\gamma}Q_{2\gamma}^{g_{\mathbb{S}}}e^{-2\gamma t}+O(e^{-2r}+e^{-4\gamma r}), \quad a.e..
\end{aligned}
$$
Thus
$$
\langle \nabla w,\nabla w \rangle_{g_+} =O(e^{-4\gamma r}+e^{-2r})
\quad\Longrightarrow \quad
|\bar{\nabla}w |^2_{\bar{g}}=O(e^{(2-4\gamma)r}+1), \quad a.e. .
$$
\end{proof}

%%%%%%
\begin{lemma}\label{lem.phi}
Let $m=1-2\gamma$. Then
$$
w\in W^{1,2}(\overline{X}, \bar{\rho}^mdV_{\bar{g}}), \quad \phi\in W^{1,2}(\overline{X}, \bar{\rho}^mdV_{\bar{g}}).
$$
Moreover,
\begin{itemize}
\item if $\gamma\in[\frac{1}{2},1)$, 	then $w$ and $\phi$ have Lipschitz extensions to $M$;
\item if $\gamma\in(0,\frac{1}{2})$, then $w$ and $\phi$ have $C^{2\gamma}$ extensions to $M$. 
\end{itemize}
\end{lemma}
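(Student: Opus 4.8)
The plan is to reduce every claim to the asymptotic information already gathered in the excerpt, namely the expansions (\ref{asy.v}), (\ref{asy.r}), (\ref{asy.t}), the bounds of Lemma \ref{lem.w}, and the crude geometry of the distance function in Lemma \ref{lem.u}. First I would record that $dV_{\bar g}=\bar\rho^{-(n+1)}dV_{g_+}$, so the weight satisfies $\bar\rho^m\,dV_{\bar g}=\bar\rho^{m-n-1}\,dV_{g_+}=\bar\rho^{-n-2\gamma}\,dV_{g_+}$; since $\bar\rho$ is comparable to $x=2e^{-r}$ near $M$, the measure blows up like $e^{(n+2\gamma)r}$ while $dV_{g_+}$ decays like $e^{-(n+1)r}$ against the intrinsic boundary volume, giving a net factor $e^{(2\gamma-1)r}$ near $\Sigma_r$. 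Because $\gamma<1$ this is integrable in the collar $r\ge r_0$, so $\int_X \bar\rho^m\,dV_{\bar g}<\infty$ and the only thing to check for $w,\phi\in L^2(\bar\rho^m dV_{\bar g})$ is that $w$ (hence $\phi=e^{\frac{n-2\gamma}{2}w}$) is bounded — which is exactly the first conclusion of Lemma \ref{lem.w}.

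For the gradient part I would combine Lemma \ref{lem.w}, which gives $|\bar\nabla w|^2_{\bar g}\le C(e^{(2-4\gamma)r}+1)$ a.e., with the measure computation above. The contribution of the collar to $\int_X|\bar\nabla w|^2_{\bar g}\,\bar\rho^m\,dV_{\bar g}$ is then controlled by $\int_{r_0}^\infty (e^{(2-4\gamma)r}+1)\,e^{(2\gamma-1)r}\,dr$ (times a bounded factor from the cross-sections), whose exponents are $1-2\gamma$ and $2\gamma-1$; both are negative precisely when... well, $1-2\gamma<0$ needs $\gamma>\tfrac12$ and $2\gamma-1<0$ needs $\gamma<\tfrac12$, so in general only one of the two terms is obviously integrable. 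The fix is that the two regimes are not independent: when $\gamma\le\tfrac12$ the bad term is the constant $1$, but then $2\gamma-1\le0$ handles it; when $\gamma\ge\tfrac12$ the bad term is $e^{(2-4\gamma)r}$, and $e^{(2-4\gamma)r}e^{(2\gamma-1)r}=e^{(1-2\gamma)r}$ has negative exponent. So in either case the integrand decays exponentially in $r$, the collar integral is finite, and on the compact remainder $X\setminus X_{r_0}$ everything is bounded; hence $w\in W^{1,2}(\overline X,\bar\rho^m dV_{\bar g})$, and since $\bar\nabla\phi=\frac{n-2\gamma}{2}\phi\,\bar\nabla w$ with $\phi$ bounded, the same holds for $\phi$. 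I expect this bookkeeping of exponents, and making sure the a.e.-gradient bound is genuinely integrable across the Lipschitz cut locus of $t$ (so that no distributional boundary term is hidden on $N_p$), to be the one point needing care; Lemma \ref{lem.OI} and Lemma \ref{lem.LQS} guarantee the singular set of $t$ has measure zero and contributes no positive jump, so the naive integration is legitimate.

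For the boundary regularity I would pass back to the geodesic normal defining function $x$. Subtracting (\ref{asy.r}) and (\ref{asy.t}), and using $u=r-t=O(e^{-2r})$ from Lemma \ref{lem.u}, one gets
\[
w=\bar r-\tilde t=(r-t)-\log\!\frac{1+\frac{2^{2\gamma}}{d_\gamma}Q^{\hat g}_{2\gamma}e^{-2\gamma r}+O(e^{-2r})}{1+\frac{2^{2\gamma}}{d_\gamma}Q_{2\gamma}^{g_{\mathbb S}}e^{-2\gamma t}+O(e^{-2t})}
=\frac{2^{2\gamma}}{d_\gamma}\bigl(Q_{2\gamma}^{g_{\mathbb S}}-Q^{\hat g}_{2\gamma}\bigr)e^{-2\gamma r}+O(e^{-2r})+O(e^{-2r}),
\]
so in the variable $x\sim 2e^{-r}$ one has $w=c(y)\,x^{2\gamma}+O(x^2)$ for a smooth function $c$ on $M$. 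When $\gamma\ge\tfrac12$ the exponent $2\gamma\ge1$ and the remainder exponent $2$ are both $\ge1$, so $w$ extends Lipschitz-continuously up to $M$ (with $w|_M=0$); when $\gamma<\tfrac12$ the leading term $x^{2\gamma}$ is exactly $C^{2\gamma}$ and dominates the $O(x^2)$ error, so $w\in C^{2\gamma}(\overline X)$ near $M$. Composing with the smooth function $s\mapsto e^{\frac{n-2\gamma}{2}s}$ preserves Lipschitz and $C^{2\gamma}$ regularity, so $\phi$ has the same extension, with $\phi|_M=1$. The only subtlety here is again that $t$, hence $\tilde t$, is merely Lipschitz near the Lipschitz graph $\Gamma_t$ over $M$ (Lemma \ref{lem.u}, last bullet), but the expansions (\ref{asy.t}) hold a.e. with uniform constants and $u=r-t$ is globally $C^0$ with the stated $e^{-2r}$ bound, which is enough to promote the pointwise estimate to a genuine modulus-of-continuity statement on $\overline X$.
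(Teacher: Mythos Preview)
There are two genuine errors.

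\textbf{The weighted measure.} Since $\bar g=\bar\rho^{2}g_+$ in dimension $n+1$, one has $dV_{\bar g}=\bar\rho^{\,n+1}\,dV_{g_+}$, not $\bar\rho^{-(n+1)}dV_{g_+}$. Equivalently, near $M$ the compactified metric satisfies $\bar g\approx dx^2+\hat g$, so $\bar\rho^m\,dV_{\bar g}\approx x^{1-2\gamma}\,dx\,dV_{\hat g}\sim e^{(2\gamma-2)r}\,dr\,dV_{\hat g}$, not $e^{(2\gamma-1)r}$. With your exponent the gradient integral becomes $\int_{r_0}^\infty\bigl(e^{(1-2\gamma)r}+e^{(2\gamma-1)r}\bigr)\,dr$, and for every $\gamma\in(0,1)$ at least one of the two exponents is nonnegative, so the integral diverges; your attempted ``fix'' mixes up which term is bad in which regime. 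With the correct weight the integrand is $e^{-2\gamma r}+e^{(2\gamma-2)r}$, both exponents are negative for all $\gamma\in(0,1)$, and the $W^{1,2}$ claim follows.

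\textbf{The boundary value of $w$.} Lemma~\ref{lem.u} gives $|u|\le C$ and $|du|_{g_+}\le Ce^{-2r}$, \emph{not} $u=r-t=O(e^{-2r})$. In general $u$ has a nonzero (merely Lipschitz) limit $u_\infty$ on $M$, so $w|_M=u_\infty$ and $\phi|_M=e^{\frac{n-2\gamma}{2}u_\infty}\neq 1$; this is precisely what the main proof uses when it identifies $\int_M\phi^{2n/(n-2\gamma)}dV_{\bar g_\infty}$ with $V(M,\tilde g_\infty)$. Consequently your expansion $w=c(y)\,x^{2\gamma}+O(x^2)$ is false, and the regularity argument built on it collapses.

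The paper states this lemma without proof, treating it as an immediate corollary of Lemma~\ref{lem.w}. The intended argument is simply to read the bound there as $|\bar\nabla w|_{\bar g}\le C\bigl(x^{2\gamma-1}+1\bigr)$ a.e. For $\gamma\ge\tfrac12$ this is bounded, so $w$ (hence $\phi$) is Lipschitz on $\overline X$. For $\gamma<\tfrac12$ the gradient blows up like $x^{2\gamma-1}$; integrating radially gives $|w(x,y)-w(0,y)|\le Cx^{2\gamma}$, and the standard detour estimate (move in to depth $|y-y'|$, across, and back out) promotes this to $w\in C^{2\gamma}(\overline X)$.
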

%%%%%%

\subsection{Energy Functional}
For $r$ sufficiently large, define the Energy functional on $(\overline{X}_r\backslash C_p, \bar{g})$: 
$$
E(\overline{X}_r,\bar{g};\phi)=-\frac{d_\gamma}{2\gamma}\left[
\int_{X_r\backslash C_p}\left(|\bar{\nabla} \phi|^2 + \frac{m+n-1}{2}\bar{J}_{\psi}^m \phi^2 \right) \bar{\rho}^m dV_{\bar{g}}+
\frac{(n-2\gamma)}{2n} \int_{\Sigma_r \backslash C_p}\bar{H}_r\phi^2 \bar{\rho}^{m}  dV_{\bar{g}_r} 
\right].
$$

%%%%
\begin{lemma}
For $r$ sufficiently large, if $\overline{X}_r\cap C_p=\emptyset$, then $E(\overline{X}_r,\bar{g};\phi)=E(\overline{X}_r,\tilde{g};1)$.
\end{lemma}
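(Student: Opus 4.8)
The plan is to exploit the conformal covariance of the weighted conformal Laplacian, recorded in \eqref{eq.L2}, together with the definition of the energy functional $E(\overline{X}_r,\bar{g};\phi)$ as a "Dirichlet energy plus mean‑curvature boundary term." The key point is that $E$ is designed precisely so that, when $\tilde{g}=\phi^{\frac{4}{n-2\gamma}}\bar{g}$ with $\phi=e^{\frac{n-2\gamma}{2}w}$ and $w=\bar{r}-\tilde{t}$, the expression $E(\overline{X}_r,\bar{g};\phi)$ transforms into $E(\overline{X}_r,\tilde{g};1)$. Since the hypothesis is $\overline{X}_r\cap C_p=\emptyset$, everything in sight is smooth on $\overline{X}_r$, so no distributional subtlety (of the kind handled elsewhere by Lemma \ref{lem.LQS}) enters here; this is the clean, "model" version of the identity.

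First I would integrate by parts in the bulk term. For a smooth positive $\phi$ on $\overline{X}_r$ one has, using $\bar{\rho}^m=e^{-\psi}$ and the definition $\bar{\Delta}_\psi=\bar{\Delta}-\bar{\nabla}\psi\cdot\bar{\nabla}$,
\begin{equation*}
\int_{X_r}\Big(|\bar{\nabla}\phi|^2+\tfrac{m+n-1}{2}\bar{J}^m_\psi\phi^2\Big)\bar{\rho}^m\,dV_{\bar{g}}
=\int_{X_r}\phi\,\big(\bar{L}^m_{2,\psi}\phi\big)\bar{\rho}^m\,dV_{\bar{g}}
+\int_{\Sigma_r}\phi\,\partial_{\bar\nu}\phi\,\bar{\rho}^m\,dV_{\bar{g}_r},
\end{equation*}
where $\bar\nu$ is the outward $\bar g$‑unit normal to $\Sigma_r$ and $\bar{L}^m_{2,\psi}$ is the operator in \eqref{def.L}. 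Next I would apply the conformal transformation law \eqref{eq.L2}: with $\tilde{g}=\phi^{-2'}\bar{g}$ in the notation there (here $\phi$ plays the role of the conformal factor, after matching the exponents $\tfrac{4}{n-2\gamma}$ versus $-2$), one gets $\bar{L}^m_{2,\psi}\phi=\phi^{\frac{m+n+3}{2}}\,\tilde{L}^m_{2,\psi}(1)$, and since by Lemma \ref{lem.CC1}/Lemma \ref{lem.CC3} the adapted defining function makes $\bar{J}^m_\psi=0$ for the Type I compactification — but for $\tilde g$ the relevant quantity is $\tilde{L}^m_{2,\psi}(1)=\tfrac{m+n-1}{2}\tilde{J}^m_\psi$, which need not vanish — the bulk integrand for $\bar g$ converts exactly into $\tfrac{m+n-1}{2}\tilde{J}^m_\psi\cdot 1^2$ integrated against $\tilde\rho^m\,dV_{\tilde g}$, after checking that $\phi^{\frac{m+n+3}{2}}\cdot\phi\cdot\bar\rho^m\,dV_{\bar g}=\tilde\rho^m\,dV_{\tilde g}$; here one uses $\tilde\rho=\phi^{-\frac{2}{n-2\gamma}}\bar\rho\cdot(\text{const})$, $dV_{\tilde g}=\phi^{\frac{2(n+1)}{n-2\gamma}}dV_{\bar g}$, and $m=1-2\gamma$, so the powers of $\phi$ cancel. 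Simultaneously the two boundary terms must combine: the $\partial_{\bar\nu}\phi$ term produced by integration by parts and the mean‑curvature term $\tfrac{n-2\gamma}{2n}\bar H_r\phi^2$ in the definition of $E$ are exactly what is needed, via the conformal change of the mean curvature $\bar H_r\mapsto\tilde H_r$ under $\tilde g=\phi^{\frac{4}{n-2\gamma}}\bar g$ (the standard formula relating $\tilde H$ to $\bar H$ plus a normal‑derivative‑of‑$\log\phi$ term), to reassemble into $\tfrac{n-2\gamma}{2n}\tilde H_r\cdot 1^2$ integrated over $\Sigma_r$ in the metric $\tilde g_r$.

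The main obstacle is the bookkeeping of conformal weights: one must track, term by term, how $dV_{\bar g}$, $\bar\rho^m$, $|\bar\nabla\phi|^2$, $\bar J^m_\psi$, $dV_{\bar g_r}$ and $\bar H_r$ scale under $\tilde g=\phi^{\frac{4}{n-2\gamma}}\bar g$, and verify that every power of $\phi$ cancels so that $\phi$ disappears from the final expression — this is the content of the assertion $E(\overline{X}_r,\bar g;\phi)=E(\overline{X}_r,\tilde g;1)$, and it is really just a restatement of the conformal covariance \eqref{eq.L2} at the level of the associated energy, boundary term included. The only place care is needed is the interface between the bulk integration by parts and the mean‑curvature term: one should organize the computation so that the normal‑derivative boundary term from integration by parts is written as $\int_{\Sigma_r}\phi\,\partial_{\bar\nu}\phi\,\bar\rho^m\,dV_{\bar g_r}=\tfrac12\int_{\Sigma_r}\partial_{\bar\nu}(\phi^2)\,\bar\rho^m\,dV_{\bar g_r}$ and matched against the conformal‑change formula for $\bar H_r$; since $\phi=e^{\frac{n-2\gamma}{2}w}$, $\partial_{\bar\nu}\log\phi=\tfrac{n-2\gamma}{2}\partial_{\bar\nu}w$, and the coefficients have been chosen (the $\tfrac{n-2\gamma}{2n}$ in $E$, the $\tfrac{d_\gamma}{2\gamma}$ overall) precisely to make this match work. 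Once these identities are assembled the statement follows; I would present the bulk computation and the boundary computation as two short displays and then conclude.
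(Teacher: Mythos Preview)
Your approach is correct and essentially identical to the paper's: both integrate by parts against the weighted conformal Laplacian, invoke the covariance \eqref{eq.L2}, and combine the resulting normal-derivative boundary term with the conformal-change formula $\tilde{H}_r=\phi^{-\frac{2}{n-2\gamma}}\bigl(\bar{H}_r+\tfrac{2n}{n-2\gamma}\phi^{-1}\partial_{\nu_r}\phi\bigr)$ so that the cross term cancels and only $\tfrac{m+n-1}{2}\tilde{J}^m_\psi$ and $\tfrac{n-2\gamma}{2n}\tilde{H}_r$ survive. One caution on the bookkeeping you already flag: the $\phi$ in \eqref{eq.L2} satisfies $\tilde g=\phi^{-2}\bar g$, whereas here $\tilde g=\phi^{4/(n-2\gamma)}\bar g$, so your displayed exponent in $\bar L^m_{2,\psi}\phi=\phi^{(m+n+3)/2}\tilde L^m_{2,\psi}(1)$ and the sign in $\tilde\rho=\phi^{-2/(n-2\gamma)}\bar\rho$ need correction, though once fixed the cancellation goes through exactly as you describe.
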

%%%%
\begin{proof}
Since $\overline{X}_r\cap C_p=\emptyset$, $t$ and hence $\phi$ are smooth all over $\overline{X}_r$. 
Then the conformal transformation $\tilde{g} =\phi^{\frac{4}{n-2\gamma}}\bar{g}$ implies that 
$$
\tilde{H}_r= \phi^{-\frac{2}{n-2\gamma}}
\left( \bar{H}_r + \frac{2n}{n-2\gamma}\phi^{-1}\frac{\partial \phi}{\partial\nu_r}\right),
$$
where $\nu_r$ is the outward normal direction on $\Sigma_r$. 
Consider the weighted conformal Laplacian $\bar{L}_{\psi}^{m}$ defined in (\ref{def.L}). Then by (\ref{eq.L2}), 
$$
\begin{aligned}
\int_{X_r}\frac{m+n-1}{2}\tilde{J}_{\psi}^m \tilde{\rho}^m dV_{\tilde{g}}
=& \int_{X_r}\left(|\bar{\nabla} \phi|^2 + \frac{m+n-1}{2}\bar{J}_{\psi}^m \phi^2 \right) \bar{\rho}^m dV_{\bar{g}}
-\int_{\Sigma_r}\phi\frac{\partial \phi }{\partial \nu_r}\bar{\rho} ^{m}dV_{\bar{g}_r},
\\
\int_{\Sigma_r}\tilde{H}_r \tilde{\rho} ^{m} dV_{\tilde{g}_r}
=& \int_{\Sigma_r}\bar{H}_r \bar{\rho}^{m} \phi ^{2}dV_{\bar{g}_r}
+\frac{2n}{n-2\gamma}\int_{\Sigma_r}\phi \frac{\partial \phi }{\partial \nu_r}\bar{\rho} ^{m}dV_{\bar{g}_r}.
\end{aligned}
$$
Therefore the energy is  conformally invariant:
\begin{align*}
&\  \int_{X_r}\frac{m+n-1}{2}\tilde{J}_{\psi}^m \tilde{\rho}^m dV_{\tilde{g}}
+ \frac{ (n-2\gamma)}{2n}\int_{\Sigma_r}\tilde{H}_r \tilde{\rho}^{m} dV_{\tilde{g}_r}
\\
=&\ \int_{X_r}\left(|\bar{\nabla} \phi|^2 
+ \frac{m+n-1}{2} \bar{J}_{\psi}^m \phi^2 \right) \bar{\rho}^m dV_{\bar{g}}
+ \frac{ (n-2\gamma)}{2n}\int_{\Sigma_r} \bar{H}_r \bar{\rho}^{m} \phi^{2}dV_{\bar{g}_r}. 
\end{align*}
We finish the proof. 
\end{proof}

%%%%
\begin{lemma}\label{lem.E}
For $r$ sufficiently large,	$E(\overline{X}_r,\bar{g};\phi)\leq E(\overline{X}_r\,\tilde{g};1)$. 
\end{lemma}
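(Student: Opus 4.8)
The plan is to compare the two energy functionals by writing the integrand for $\tilde g$ in terms of $\bar g$ and $\phi$ exactly as in the previous lemma, but now keeping track of the cut locus $C_p$. The essential point is that $\tilde g$ (equivalently $\tilde t$, equivalently $\phi$) fails to be smooth only along $C_p$, so the integration-by-parts identity $\int_{X_r}\tilde J^m_\psi\tilde\rho^m\,dV_{\tilde g}$-type formulas hold on $X_r\setminus C_p$; the question is whether the boundary contributions picked up along $C_p$ have a favorable sign. So first I would split $X_r\setminus C_p$ into the open regions on which $t$ is smooth, apply the identity from the previous lemma on each piece, and sum. Since $Q_p\cup L_p$ has Hausdorff dimension $\le n-1$ by Lemma \ref{lem.OI}, it carries no $(n+1)$-dimensional measure and no codimension-one boundary flux, so only the smooth hypersurface part $N_p$ contributes a genuine interface term.

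Next I would identify that interface term. Across a sheet of $N_p$ the conformal factor $\phi=e^{\frac{n-2\gamma}{2}(\bar r-\tilde t)}$ is continuous (since $\bar r$ is globally smooth and $\tilde t$ is continuous) but its normal derivative jumps, because $\partial_\nu\tilde t$ jumps. The flux term coming from integrating $\operatorname{div}(\phi\bar\nabla\phi)$ by parts on the two sides of $N_p$ is, up to the positive constant $-\frac{d_\gamma}{2\gamma}$ and the weight $\bar\rho^m\ge 0$, proportional to
$$
-\int_{N_p\cap X_r}\phi\left(\partial_{\nu^+}\phi+\partial_{\nu^-}\phi\right)\bar\rho^m\,dV,
$$
and since $\phi>0$ and $\partial_{\nu^\pm}\phi=\frac{n-2\gamma}{2}\phi\,\partial_{\nu^\pm}(\bar r-\tilde t)=-\frac{n-2\gamma}{2}\phi\,\partial_{\nu^\pm}\tilde t$ (the $\bar r$ contribution being smooth and hence cancelling between the two sides), this equals a positive multiple of $\int_{N_p\cap X_r}\phi^2\bigl(\partial_{\nu^+}\tilde t+\partial_{\nu^-}\tilde t\bigr)\bar\rho^m\,dV$. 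By Lemma \ref{lem.LQS}, $\partial_{\nu^+}\tilde t+\partial_{\nu^-}\tilde t\ge 0$ (the monotonicity of $\Phi$ from Lemma \ref{lem.Phi} transfers the sign statement for $t$ to one for $\tilde t$), so this interface term has the sign that makes $E(\overline X_r,\bar g;\phi)$ at most the quantity one gets by running the conformal identities as if $\phi$ were smooth, which is exactly $E(\overline X_r,\tilde g;1)$. One must also check that the hypersurface $\Sigma_r$ over which we integrate the mean-curvature term meets $C_p$ only in the small set $\Sigma_r\cap(Q_p\cup L_p\cup N_p)$, and that the $N_p$ portion there contributes nothing extra at the level of the outer boundary integral — this is where the finite-perimeter/Lipschitz-graph structure of $\Gamma_t$ and the fact that $\Sigma_r$ is a fixed smooth hypersurface (transverse to generic $N_p$ sheets) are used.

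The main obstacle I expect is making the integration by parts across $C_p$ rigorous: $\phi\in W^{1,2}(\overline X,\bar\rho^m dV_{\bar g})$ by Lemma \ref{lem.phi}, but $C_p$ is only known to be a countable union of smooth hypersurfaces plus a closed set of dimension $\le n-1$, so one cannot blithely apply the divergence theorem. The clean way is to exhaust $X_r\setminus C_p$ by smooth domains $\Omega_\varepsilon$ whose boundaries are $\Sigma_r$ together with an $\varepsilon$-tube around $C_p$, apply Green's identity on each $\Omega_\varepsilon$ where everything is smooth, and let $\varepsilon\to 0$: the tube around $Q_p\cup L_p$ contributes $o(1)$ because its boundary area is $o(1)$ (Hausdorff dimension $\le n-1$ plus the gradient bound $|\bar\nabla\phi|^2\le C(e^{(2-4\gamma)r}+1)$ from Lemma \ref{lem.w}), while the two faces of the tube around $N_p$ converge to the interface integral above. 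A secondary technical point is the integrability near $M$ of all the boundary integrands against the weight $\bar\rho^m$ with $m=1-2\gamma$ possibly negative, which is controlled by the asymptotic expansions (\ref{asy.v}), (\ref{asy.r}), (\ref{asy.t}) and the estimates in Lemma \ref{lem.w}; this is routine but needs to be stated. Assembling these pieces yields $E(\overline X_r,\bar g;\phi)\le E(\overline X_r,\tilde g;1)$ for all sufficiently large $r$.
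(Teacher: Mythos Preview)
Your proposal is correct and follows essentially the same route as the paper: excise an $\epsilon$-neighborhood of $C_p$, apply the conformal identity from the preceding lemma on the smooth complement, send $\epsilon\to 0$ so that the flux around $Q_p\cup L_p$ vanishes (Hausdorff dimension $\le n-1$), and use Lemma~\ref{lem.LQS} together with $\Phi'<0$ from Lemma~\ref{lem.Phi} to give the $N_p$ interface integral the favorable sign. The paper writes the interface contribution as $\int_{N_p\cap X_r}\phi\bigl(\partial_{\nu^+}\phi+\partial_{\nu^-}\phi\bigr)=\int_{N_p\cap X_r}\tfrac{\Phi'(t)}{\Phi(t)}\,\phi^2\bigl(\partial_{\nu^+}t+\partial_{\nu^-}t\bigr)\le 0$ sitting on the $\bar g$ side of the identity, so double-check your sign convention (your displayed expression carries an extra minus sign relative to this) when you write out the details.
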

%%%%
\begin{proof}
We only need to consider the case  $\overline{X}_r\cap C_p\neq \emptyset$. 
For any $\epsilon>0$, due to the compactness of $\overline{X}_r\cap (Q_p\cup L_p)$, there exists finitely many points $p_i\in \overline{X}_r\cap (Q_p\cup L_p)$ so that 
$
B_r^{\epsilon}=\bigcup_{i} B_{\epsilon}(p_i)
$
covers $\overline{X}_r\cap (Q_p\cup L_p)$. Then
\begin{equation}\label{eq.5.1}
\begin{aligned}
&\ \int_{X_r\backslash (B_r^{\epsilon}\cup N_p)}\left(|\bar{\nabla} \phi|^2 + \frac{m+n-1}{2}\bar{J}_{\psi}^m \phi^2 \right) \bar{\rho}^m dV_{\bar{g}}+
\frac{(n-2\gamma)}{2n} \int_{\Sigma_r \backslash (B_r^{\epsilon}\cup N_p)}\bar{H}\bar{\rho}^{m} \phi^2 dV_{\bar{g}_r} \\
= &\ 
\int_{X_r\backslash (B_r^{\epsilon}\cup N_p)}\left( \frac{m+n-1}{2}\tilde{J}_{\psi}^m  \right) \tilde{\rho}^m dV_{\tilde{g}}+
\frac{(n-2\gamma)}{2n} \int_{\Sigma_r \backslash (B_r^{\epsilon}\cup N_p)}\tilde{H}\tilde{\rho}^{m} dV_{\tilde{g}_r} 
\\
&\ +
\int_{X_r\cap \partial B_r^{\epsilon}} \phi \partial_{\nu}\phi dV_{\bar{g}|_{\partial B_r^{\epsilon}}}
+\int_{X_r\cap (N_p\backslash B_r^{\epsilon})} \phi (\partial_{\nu^+}\phi +\partial_{\nu^-}\phi) dV_{\bar{g}|_{N_p}}).
\end{aligned}
\end{equation}
Here $\nu$ is the outward normal direction to $\partial B_r^{\epsilon}$ pointing from the inside of $X_r\backslash B_r^{\epsilon}$ and $\nu^{\pm}$ are  the two outward normal directions pointing from the inside of $X_r\backslash N_p$. 
By Lemma \ref{lem.OI}, $Q_p\cup L_p$ has Hausdorff dimension $\leq n-1$, which together with Lemma \ref{lem.phi} implies that
$$
\int_{X_r\cap \partial B_r^{\epsilon}} \phi \partial_{\nu}\phi dV_{\bar{g}|_{\partial B_r^{\epsilon}}} \longrightarrow 0, \quad \mathrm{as}\ \epsilon\rightarrow 0. 
$$
At $X_r\cap N_p$, direct computation shows that
$$
\partial_{\nu^+}\phi +\partial_{\nu^-}\phi=
-\frac{n-2\gamma}{2}\phi \left[\partial_{\nu^+}\tilde{t}+\partial_{\nu^-}\tilde{t}\right]
=\frac{\Phi'(t)}{\Phi(t)} \phi \left[\partial_{\nu^+}t+\partial_{\nu^-}t  \right].
$$
Notice that $\Phi(t)>0, \Phi'(t)<0$ by  Lemma \ref{lem.Phi}. 
Then Lemma \ref{lem.LQS} implies that
$$
\int_{X_r\cap (N_p\backslash B_r^{\epsilon})} \phi (\partial_{\nu^+}\phi +\partial_{\nu^-}\phi) dV_{\bar{g}|_{N_p}}\leq 0. 
$$
Here $d_{\gamma}<0$ for $\gamma\in (0,1)$. 
Let $\epsilon\rightarrow 0$ in (\ref{eq.5.1}) and we finish the proof. 
\end{proof}

\vspace{0.1in}
\subsection{Curvature Estimate}
%%%%
\begin{lemma}\label{lem.J}
At any point where $t$ is smooth, the weighted scalar curvature $\tilde{J}_{\psi}^{m}$
satisfies
$$\tilde{J}_{\psi}^{m}\leq 0.$$
\end{lemma}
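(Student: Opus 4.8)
The plan is to compute $\tilde{J}_{\psi}^m$ directly from its definition in \eqref{def.J}, using the fact that $\tilde{\rho}=\Phi^{\frac{2}{n-2\gamma}}(t)$ depends only on the $g_+$-distance $t$, and then to exploit the differential equation satisfied by $\Phi$ (equivalently, by the adapted function $v$ on $\mathbb{H}^{n+1}$) together with the two comparison inequalities at our disposal: the Laplacian comparison $\Delta_+ t\le n\coth t$ from Lemma \ref{lem.SY} and the sign $\Phi'<0$ from Lemma \ref{lem.Phi}. From \eqref{def.J}, writing $m=1-2\gamma$ and $s=\frac{n}{2}+\gamma$, we have
\[
\tilde{J}_{\psi}^m=\tilde{J}-\frac{m}{n+1}\left(\tilde{J}+\frac{\tilde{\Delta}\tilde{\rho}}{\tilde{\rho}}\right),
\]
so the whole computation reduces to controlling the sign of $\tilde{J}+\frac{\tilde{\Delta}\tilde{\rho}}{\tilde{\rho}}$ (the other presentation of $\bar{J}_\psi^m$ in \eqref{def.J} via $\bar{R}$, $\bar{\Delta}\bar{\rho}/\bar{\rho}$, and $(1-|\bar\nabla\bar\rho|^2)/\bar\rho^2$ may be more convenient since $|d\bar\rho|$-type quantities are exactly what Lemma \ref{lem.u} controls). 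The key algebraic input is that on the model $\mathbb{H}^{n+1}$ one has $\tilde{J}_{\psi}^m=0$ identically by Lemma \ref{lem.CC3}, because there $\tilde\rho$ is genuinely the adapted defining function; so the strategy is to write $\tilde{J}_{\psi}^m$ on $(X,g_+)$ as the model value (zero) plus error terms coming from the failure of $g_+$ to be hyperbolic, and show each error term has the right sign.

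Concretely, I would first convert $\tilde{\Delta}\tilde{\rho}$ into $g_+$-quantities via the conformal change $\tilde{g}=\tilde{\rho}^2 g_+$, which introduces $\Delta_+ t$, $|\nabla_+ t|_{g_+}^2=1$, and derivatives of $\Phi$; the resulting expression is a first-order ODE operator in $t$ applied to $\Phi$, with coefficient $\Delta_+ t$ in the first-derivative term. Then I would substitute the ODE for $\Phi$ — namely the radial form of $-\Delta_{\mathbb{H}}v-(\frac{n^2}{4}-\gamma^2)v=0$ derived in the proof of Lemma \ref{lem.Phi}, where the coefficient of $\Phi'$ is $n\coth t$ (since $r=t$ on $\mathbb{H}^{n+1}$ and the radial Laplacian there has first-order coefficient $n\coth t$). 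The difference between the actual operator and the model operator is exactly $(\Delta_+ t - n\coth t)\,\Phi'(t)$ times a positive factor. Since $\Delta_+ t\le n\coth t$ by Lemma \ref{lem.SY} and $\Phi'(t)<0$ by Lemma \ref{lem.Phi}, this difference is $\ge 0$, and tracking the signs through \eqref{def.J} (remembering $m=1-2\gamma$, so $m>0$ for $\gamma\in(0,\frac12)$ and $m<0$ for $\gamma\in(\frac12,1)$ — a point that needs care) should yield $\tilde{J}_{\psi}^m\le 0$.

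The main obstacle I anticipate is bookkeeping: assembling $\tilde{J}$ (the scalar curvature of $\tilde g$, which involves second derivatives of $\tilde\rho$ and hence $\Delta_+ t$, $|\nabla_+ t|^2$, and $\mathrm{Ric}_{g_+}(\nabla t,\nabla t)=-n$) together with $\tilde\Delta\tilde\rho/\tilde\rho$ and verifying that, after inserting the $\Phi$-ODE, every remaining term is either zero (the model part) or a nonpositive multiple of $(\Delta_+ t-n\coth t)$. I would double-check the computation by specializing to $g_+=g_{\mathbb{H}}$ at the end and confirming that it reproduces $\tilde J_\psi^m\equiv 0$, which pins down all the constants. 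A secondary subtlety is that $t$ — and hence $\tilde\rho$ and $\tilde g$ — is only Lipschitz across the cut locus, but the statement is only claimed at points where $t$ is smooth, so no distributional argument is needed here; the cut-locus issue has already been absorbed into Lemma \ref{lem.E}.
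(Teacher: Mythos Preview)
Your proposal is correct and follows essentially the same route as the paper: convert $\tilde J_\psi^m$ to $g_+$-quantities, rewrite it as a negative multiple of $\Delta_+\Phi(t)+(\tfrac{n^2}{4}-\gamma^2)\Phi(t)$, and then use $\Phi'<0$ together with $\Delta_+t\le n\coth t$ to compare with the model ODE. Your flagged worry about the sign of $m=1-2\gamma$ turns out to be a non-issue: once you combine $\tilde J$ with $\tilde\Delta\tilde\rho/\tilde\rho$ the $m$-dependence collapses and one obtains $\tilde J_\psi^m=-\frac{e^{2\tilde t}}{2(n-2\gamma)\Phi}\bigl[\Delta_+\Phi+(\tfrac{n^2}{4}-\gamma^2)\Phi\bigr]$, whose prefactor is negative for all $\gamma\in(0,1)$, so no case split is needed.
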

%%%%
\begin{proof}
According to (\ref{def.J}) or \cite[ Lemma 3.2]{CC} , 
$$
\tilde{J}_{\psi}^{m}=\tilde{J} -\frac{m}{n+1}\left(\tilde{J}+ e^{\tilde{t}}\tilde{\Delta}(e^{-\tilde{t}})\right).
$$
By the conformal transformation $\tilde{g} = 4e^{-2\tilde{t}}g_+$, 
$$
\begin{aligned}
\tilde{J} =&\  \frac{e^{2\tilde{t}}}{4}\left( \Delta_+ \tilde{t} -\frac{(n+1)}{2} -\frac{(n-1)}{2}|\nabla \tilde{t} |_{g_+}^2\right),	
\\
\tilde{\Delta}(e^{-\tilde{t} })
=&\ \frac{e^{2\tilde{t} }}{4}\left(\Delta_{+}(e^{-\tilde{t} }) -(n-1)\langle\nabla \tilde{t} , \nabla (e^{-\tilde{t} } ) \rangle_{g_+}\right)
\\
=&\  \frac{e^{\tilde{t} }}{4}\left(-\Delta_+ \tilde{t}  +n|\nabla \tilde{t} |^{2}_{g_+}\right).
\end{aligned}
$$
Therefore
$$
\begin{aligned}
\tilde{J}_{\psi}^{m}=&\  \frac{e^{2\tilde{t} }}{4}\left(\Delta_+ \tilde{t}  -\frac{(n-2\gamma)}{2}|\nabla \tilde{t} |^{2}_{g_+}-\frac{(n+2\gamma)}{2}\right)
\\
=&\ 
-\frac{e^{2\tilde{t}}}{2(n-2\gamma)\Phi(t)}\left[
\Delta_{+}\Phi(t)+\left(\frac{n^2}{4}-\gamma^2\right)\Phi(t)\right].
\end{aligned}
$$
By Lemma \ref{lem.Phi} and Lemma \ref{lem.SY}, $\Phi'(t)\leq  0$ and $\Delta_{+}t \leq n\coth t=\Delta_{\mathbb{H}}t$. Hence we have
$$ 
\begin{aligned}
\Delta_{+}\Phi+\left(\frac{n^2}{4}-\gamma^2\right)\Phi 
=&\ \Phi'(t)\Delta_{+}t + \Phi''(t) +\left(\frac{n^2}{4}-\gamma^2\right)\Phi(t)
\\
\geq&\ \Phi'(t)\Delta_{\mathbb{H}} t + \Phi''(t) +\left(\frac{n^2}{4}-\gamma^2\right)\Phi(t)
\\
=&\  \Delta_{\mathbb{H}}\Phi(t)+\left(\frac{n^2}{4}-\gamma^2\right)\Phi(t)=0. 
\end{aligned}
$$
Thus it follows that $\tilde{J}_{\psi}^{m}\leq 0$. 
\end{proof}

%%%%
\begin{lemma}\label{lem.H1}
For $r$ sufficiently large,
$$
\bar{H}_r \bar{\rho}^m =-\frac{2n\gamma}{d_\gamma}Q^{\hat{g}}_{2\gamma}+O(e^{(2\gamma-2)r}).
$$
\end{lemma}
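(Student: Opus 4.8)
The plan is to compute the mean curvature $\bar H_r$ of the level set $\Sigma_r = \{r = \text{const}\}$ in the compactified metric $\bar g = \bar\rho^2 g_+$ directly from the asymptotic expansion of $\bar\rho$ in terms of the geodesic normal defining function $x$ (equivalently $r = -\log(x/2)$), which is already recorded in (\ref{asy.r}). First I would recall that in the original geodesic normal compactification $g = x^2 g_+ = dx^2 + g_x$ with $g_x = \hat g + x^2 g_2 + \cdots$, the mean curvature $H_r^{(x)}$ of $\Sigma_r$ with respect to $g$ (outward normal $-\partial_x$, suitably oriented) has a standard expansion $H_r^{(x)} = O(x^2)$, coming from the fact that $g_x$ is even to order $n$ and $g_2 = -\hat A$; more precisely $\tfrac12 \mathrm{tr}_{g_x}(\partial_x g_x) = x\,\mathrm{tr}_{\hat g} g_2 + O(x^3) = -x\hat J\cdot(\text{const}) + O(x^3)$.

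Next I would pass from $g = x^2 g_+$ to $\bar g = \bar\rho^2 g_+ = (\bar\rho/x)^2 g$ via the conformal factor $e^{2\omega} := (\bar\rho/x)^2$, where by (\ref{asy.r}),
$$
\omega = \log(\bar\rho/x) = \log\!\left(1 + \tfrac{2^{2\gamma}}{d_\gamma}Q^{\hat g}_{2\gamma}e^{-2\gamma r} + O(e^{-2r})\right) = \tfrac{2^{2\gamma}}{d_\gamma}Q^{\hat g}_{2\gamma}e^{-2\gamma r} + O(e^{-2r}).
$$
Under a conformal change $\bar g = e^{2\omega} g$, the mean curvature of a hypersurface transforms as $\bar H = e^{-\omega}\big(H^{(x)} + n\,\partial_{\nu}\omega\big)$, where $\nu$ is the $g$-unit outward normal to $\Sigma_r$. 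Since $\Sigma_r$ is a level set of $x$ (hence of $r$), $\partial_\nu = -\partial_x$ (up to sign conventions and the normalization $|dx|_g \equiv 1$), so $\partial_\nu\omega$ is computed by differentiating the displayed expansion in $x$: using $e^{-2\gamma r} = (x/2)^{2\gamma}$ one gets $\partial_x\omega = \tfrac{2\gamma}{d_\gamma}\cdot\tfrac12 Q^{\hat g}_{2\gamma}x^{2\gamma-1}\cdot 2^{1-2\gamma}\cdot(\text{const}) + O(x)$, and after tracking the constants the dominant term of $n\,\partial_\nu\omega$ is of order $x^{2\gamma}$ with coefficient a fixed multiple of $\gamma Q^{\hat g}_{2\gamma}$. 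Combining with $H^{(x)}_r = O(x^2)$ and the prefactor $e^{-\omega} = 1 + O(x^{2\gamma})$ yields
$$
\bar H_r = -\,\frac{2n\gamma}{d_\gamma}\,Q^{\hat g}_{2\gamma}\,\bar\rho^{-m}\big(1 + O(e^{(2\gamma-2)r})\big)
$$
once one multiplies by $\bar\rho^m = \bar\rho^{1-2\gamma}$ and uses $\bar\rho = x(1 + O(x^{2\gamma})) = 2e^{-r}(1+O(e^{-2\gamma r}))$ so that $\bar\rho^{1-2\gamma}$ exactly cancels the $x^{2\gamma-1}$ from $\partial_x\omega$ up to the stated error; this gives the claimed identity $\bar H_r\bar\rho^m = -\tfrac{2n\gamma}{d_\gamma}Q^{\hat g}_{2\gamma} + O(e^{(2\gamma-2)r})$.

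The main obstacle I anticipate is purely bookkeeping: getting every numerical constant right through the chain (i) expansion of $H^{(x)}_r$ in the geodesic normal gauge, (ii) the exact normalization in the conformal transformation law for mean curvature with the sign convention for the outward normal used in the definition of the energy $E(\overline X_r,\bar g;\phi)$, and (iii) converting between $x$, $r$, and $\bar\rho$ in the powers and coefficients, in particular verifying that the factor $\bar\rho^m$ with $m = 1-2\gamma$ is precisely what turns the $x^{2\gamma-1}$-behavior of $\bar H_r$ into a bounded (indeed convergent, as $r\to\infty$) quantity with limit $-\tfrac{2n\gamma}{d_\gamma}Q^{\hat g}_{2\gamma}$. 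One should also double-check that the $O(e^{-2r})$ and $O(x^2)$ error terms from $H^{(x)}_r$, from the subleading term in (\ref{asy.r}), and from the $\hat J x^2$ term in the expansion of $\bar\rho$ all contribute only at order $e^{(2\gamma-2)r}$ after multiplication by $\bar\rho^m$; since $\gamma\in(0,1)$ we have $2\gamma - 2 < 0$, so this is a genuine decay and the leading constant is well defined. An alternative, if the conformal-transformation route proves messy, is to compute $\bar H_r = \mathrm{tr}_{\bar g_r}\bar\Pi$ directly: write $\bar g = \bar\rho^2 g_+$ near $\Sigma_r$, express the unit normal and second fundamental form of $\{\bar r = \text{const}\}$ — but note $\Sigma_r$ is a level set of $r$, not of $\bar r$, so one must use $\bar r = r + \omega(r)$ with $\omega$ as above and the two foliations differ; I would prefer the conformal-change computation precisely because it keeps $\Sigma_r$ fixed as the object whose mean curvature changes only by the gradient-of-conformal-factor term.
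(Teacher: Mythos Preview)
Your conformal-change approach via the intermediate metric $g = x^2g_+$ is correct and yields the lemma, but the paper actually takes the route you flag as an ``alternative'' at the end and then set aside: it computes $\bar\Pi$ directly from $g_+$ by picking a $g_+$-orthonormal frame $\{e_i\}$ on $T\Sigma_r$ with unit normal $\nabla r$, rescaling to the $\bar g$-orthonormal frame $\{\tfrac{e^{\bar r}}{2}e_i\}$, and obtaining
\[
\bar\Pi_{ij} \;=\; \tfrac{e^{\bar r}}{2}\Bigl(\nabla^2 r(e_i,e_j) - \langle\nabla\bar r,\nabla r\rangle_{g_+}\,\delta_{ij}\Bigr),
\]
after which the Hessian estimate $\nabla^2 r(e_i,e_j) = \delta_{ij} + O(e^{-2r})$ from Lemma~\ref{lem.u} and the expansion~(\ref{asy.r}) finish the computation. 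Your worry that ``$\Sigma_r$ is a level set of $r$, not of $\bar r$'' is not an obstacle here: one simply uses the rescaled $\nabla r$ as the $\bar g$-unit normal, and the displayed formula is precisely the conformal change of the second fundamental form for the factor $2e^{-\bar r}$. The paper's path is marginally more economical because it plugs directly into the already-established Hessian bound on $r$ and avoids introducing $H_r^{(x)}$ as a separate object; your route has the advantage of making the conformal transformation law explicit. (Incidentally, $H_r^{(x)} = O(x)$, not $O(x^2)$ as you first assert---your own displayed expression $\tfrac12\mathrm{tr}_{g_x}(\partial_x g_x) = x\,\mathrm{tr}_{\hat g}g_2 + O(x^3)$ already shows the leading term is of order $x$---but since this contributes only at order $x^{2-2\gamma} = e^{(2\gamma-2)r}$ after multiplication by $\bar\rho^m$, it lands in the stated error.) Either route involves the same bookkeeping with powers of $x$, $e^{-r}$, and $\bar\rho$.
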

%%%%

\begin{proof}
Let $\{e_i\}(1\leq i\leq n)$ be an orthonormal basis for $T_q \Sigma_r$ w.r.t.  $g_+$, with unit normal $\nabla r$.  After  the conformal transformation $\bar{g} = 4e^{-2\bar{r}}g_+$, $\{\bar{e}_i=\frac{e^{\bar{r}}}{2}e_i\}$ forms an orthonormal basis for $T_q \Sigma_r$ w.r.t.  $\bar{g}$, with unit normal  $\frac{e^{\bar{r}}}{2}\nabla r$. 
Then we obtain the second fundamental form
$$
\begin{aligned}
\bar{\Pi}_{ij} &=
 \langle \bar{\nabla}_{\bar{e}_i} (\tfrac{e^{\bar{r}}}{2}\nabla r), \bar{e}_j\rangle_{\bar{g}}
= \langle \bar{\nabla}_{{e}_i} (\tfrac{e^{\bar{r}}}{2}\nabla r), {e}_j\rangle_{g_+}
\\&=
 \langle \nabla_{e_i}(\tfrac{e^{\bar{r}}}{2}\nabla r),e_j\rangle_{g_+}
-\tfrac{e^{\bar{r}}}{2}\langle \nabla\bar{r}, \nabla r\rangle_{g_+}\delta_{ij}
\\&=
\tfrac{e^{\bar{r}}}{2}\left( \nabla ^2 r(e_i,e_j)
-\langle\nabla \bar{r},\nabla r\rangle_{g_+} \delta_{ij}\right).
\end{aligned}
$$
Recall that
$$\nabla ^2 r(e_i,e_j)=\delta_{ij}+O(e^{-2r}),\quad
\bar{r} =r-\log\left(1+\frac{2^{2\gamma}}{d_\gamma}Q^{\hat{g}}_{2\gamma}e^{-2\gamma r}+O(e^{-2r})\right),
$$
which implies the estimates of the mean curvature. 

\end{proof}

%%%%
\begin{lemma}\label{lem.H2}
For almost every $r$ sufficiently large, 
$$
\tilde{H}_r\tilde{\rho}^{m}=-\frac{2n\gamma}{d_\gamma}Q^{g_{\mathbb{S}}}_{2\gamma}+O(e^{(2\gamma-2)r}), \quad a.e.
$$
\end{lemma}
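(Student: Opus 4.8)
The plan is to mimic the proof of Lemma \ref{lem.H1}, using that $\tilde t$, although not globally smooth, depends only on the distance function $t$, so that all computations reduce to one-variable identities involving $\Phi$. First I would set up, at a point $q\in\Sigma_r$ where $t$ is smooth, an orthonormal basis $\{e_i\}_{1\le i\le n}$ for $T_q\Sigma_r$ with respect to $g_+$ and unit normal $\nabla r$; after the conformal change $\tilde g=4e^{-2\tilde t}g_+$ the rescaled frame $\{\tilde e_i=\tfrac{e^{\tilde t}}{2}e_i\}$ is orthonormal for $\tilde g_r$ with unit normal $\tfrac{e^{\tilde t}}{2}\nabla r$. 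Exactly as in Lemma \ref{lem.H1}, the second fundamental form of $\Sigma_r$ in $(\overline X,\tilde g)$ is
\begin{equation*}
\tilde\Pi_{ij}=\tfrac{e^{\tilde t}}{2}\left(\nabla^2 r(e_i,e_j)-\langle\nabla\tilde t,\nabla r\rangle_{g_+}\,\delta_{ij}\right),
\end{equation*}
so that $\tilde H_r=\tfrac{e^{\tilde t}}{2}\bigl(\Delta_+ r - \langle\nabla r,\nabla r\rangle_{g_+}^{-1}\cdots\bigr)$; more precisely $\tilde H_r=\tfrac{e^{\tilde t}}{2}\sum_i\bigl(\nabla^2 r(e_i,e_i)-n\langle\nabla\tilde t,\nabla r\rangle_{g_+}\bigr)$, and by Lemma \ref{lem.u}, $\nabla^2 r(e_i,e_i)=1+O(e^{-2r})$ on directions tangent to $\Sigma_r$.

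Next I would compute $\langle\nabla\tilde t,\nabla r\rangle_{g_+}$ and $\tilde\rho^m=2^m e^{-m\tilde t}$ using the asymptotic expansion \eqref{asy.t}, namely $\tilde t=t-\log\bigl(1+\tfrac{2^{2\gamma}}{d_\gamma}Q_{2\gamma}^{g_{\mathbb S}}e^{-2\gamma t}+O(e^{-2t})\bigr)$, together with the bounds $|u|\le C$, $|du|_{g_+}\le Ce^{-2r}$ and $|\langle\nabla r,\nabla t\rangle_{g_+}-1|\le Ce^{-2r}$ from Lemma \ref{lem.u}. Since $\tilde t$ is a function of $t$ alone, $\nabla\tilde t=\tfrac{d\tilde t}{dt}\nabla t$, and differentiating the expansion gives $\tfrac{d\tilde t}{dt}=1+\tfrac{2^{2\gamma}2\gamma}{d_\gamma}Q_{2\gamma}^{g_{\mathbb S}}e^{-2\gamma t}+O(e^{-2t})$ (this is the computation already recorded inside the proof of Lemma \ref{lem.w}, where $|\nabla\tilde t|_{g_+}^2=1+\tfrac{2^{2\gamma}2\gamma}{d_\gamma}Q_{2\gamma}^{g_{\mathbb S}}e^{-2\gamma t}+O(e^{-2t}+e^{-4\gamma t})$ a.e.). Combining with $e^{-t}=e^{-r}(1+O(e^{-2r}))$ from $|u|\le C$, $|du|_{g_+}\le Ce^{-2r}$ — more carefully, $u=r-t$ with $|u|\le C$ and $|du|_{g_+}\le Ce^{-2r}$ so $e^{-2\gamma t}=e^{-2\gamma r}(1+O(e^{-2r}))$ up to a bounded multiplicative constant, but one must be a little careful that the leading coefficients match; since $Q_{2\gamma}^{g_{\mathbb S}}$ is a fixed constant this is harmless — assemble
\begin{equation*}
\tilde H_r\tilde\rho^m=2^m\,\tfrac{e^{(1-m)\tilde t}}{2}\Bigl(n - n\langle\nabla\tilde t,\nabla r\rangle_{g_+}+O(e^{-2r})\Bigr)=-\frac{2n\gamma}{d_\gamma}Q_{2\gamma}^{g_{\mathbb S}}+O(e^{(2\gamma-2)r})
\end{equation*}
a.e., where the exponent $1-m=2\gamma$ produces the factor $e^{2\gamma\tilde t}\sim e^{2\gamma r}$ that cancels the $e^{-2\gamma r}$ in the curvature term and degrades the error from $O(e^{-2r})$ to $O(e^{2\gamma r}\cdot e^{-2r})=O(e^{(2\gamma-2)r})$.

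The one point requiring care — and the main obstacle — is the phrase \emph{for almost every $r$}: unlike in Lemma \ref{lem.H1}, where $\bar r$ is globally smooth, here $\Sigma_r$ may meet the cut locus $C_p$, so $\tilde t$ (and hence $\tilde H_r$) is only defined off a set of Hausdorff dimension $\le n-1$ by Lemma \ref{lem.OI}; thus the identity and the estimate hold pointwise only on $\Sigma_r\setminus C_p$, i.e. $\mathcal H^{n}$-a.e.\ on $\Sigma_r$, and this is exactly what is claimed. I would therefore phrase the argument at a generic smooth point $q\in\Sigma_r\setminus C_p$ and note that this suffices since $C_p\cap\Sigma_r$ has measure zero. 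A minor secondary check: one must verify that the $O(e^{-2r})$ error terms coming from Lemma \ref{lem.u} and from \eqref{asy.t} are uniform over $\Sigma_r\setminus C_p$, which follows because the constants in Lemma \ref{lem.u} are global and $\tilde t$ depends on $t$ alone. Everything else is the same bookkeeping as in Lemma \ref{lem.H1}, with $\hat g$ replaced by $g_{\mathbb S}$ throughout because $\Phi$ is the hyperbolic model solution.
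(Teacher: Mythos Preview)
Your proposal is correct and follows essentially the same approach as the paper: set up an orthonormal frame for $T_q\Sigma_r$ in $g_+$, compute $\tilde\Pi_{ij}=\tfrac{e^{\tilde t}}{2}\bigl(\nabla^2 r(e_i,e_j)-\langle\nabla\tilde t,\nabla r\rangle_{g_+}\,\delta_{ij}\bigr)$ via the conformal change, and then feed in the expansion \eqref{asy.t} together with $\langle\nabla t,\nabla r\rangle_{g_+}=1+O(e^{-2r})$ from Lemma~\ref{lem.u}. The paper's proof is in fact terser than yours (it simply records the second fundamental form and the two asymptotic inputs and says ``which implies the estimates of the mean curvature''), so your additional bookkeeping and your explicit treatment of the cut locus via Lemma~\ref{lem.OI} only make the argument more complete, not different.
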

\begin{proof}
Let $\{e_i\}(1\leq i\leq n)$ be an orthonormal basis for $T_q \Sigma_r$ w.r.t.  $g_+$, with unit normal $\nabla r$.
After conformal transformation $\tilde{g} = 4e^{-2\tilde{t}}g_+$, $\{\tilde{e}_i=\frac{e^{\tilde{t}}}{2}e_i\}$ forms an orthonormal basis for $T_q \Sigma_r$ w.r.t.  $\tilde{g}$, with unit normal  $\frac{e^{\tilde{t}}}{2}\nabla r$. 
Then at any point where $t$ is smooth, we have the  second fundamental form
$$
\begin{aligned}
\tilde{\Pi}_{ij} &=
 \langle \tilde{\nabla}_{\tilde{e}_i} (\tfrac{e^{\tilde{t}}}{2}\nabla r), \tilde{e}_j\rangle_{\tilde{g}}
= \langle \tilde{\nabla}_{{e}_i} (\tfrac{e^{\tilde{t}}}{2}\nabla r), {e}_j\rangle_{g_+}
\\&=
 \langle \nabla_{e_i}(\tfrac{e^{\tilde{t}}}{2}\nabla r),e_j\rangle_{g_+}
-\tfrac{e^{\tilde{t}}}{2}\langle \nabla\tilde{t}, \nabla r\rangle_{g_+}\delta_{ij}
\\&=
\tfrac{e^{\tilde{t}}}{2}\left( \nabla ^2 r(e_i,e_j)
-\langle\nabla \tilde{t},\nabla r\rangle_{g_+} \delta_{ij}\right).
\end{aligned}
$$
Recall that
$$
\tilde{t}=t-\log\left(1+\frac{2^{2\gamma}}{d_\gamma}Q^{g_{\mathbb{S}}}_{2\gamma}e^{-2\gamma t}+O(e^{-2t})\right),\quad \langle \nabla t,\nabla r\rangle_{g_+}=1+O(e^{-2r}), \quad a.e.
$$
which implies the estimates of the mean curvature. 
\end{proof}

%%%%%%%%%%%%%%%%%%%%%%%%%%%
%% Need some further check

\subsection{Volume Limit}
Notice that the $t$-level set $\Gamma_t$ is a Lipschitz graph over $M$, via a projection along the $r$-geodesic. 
Moreover, for almost every $t$ large enough, the projection of $\Gamma_t\cap C_p$ is a set of measure zero. %%%%
\begin{lemma}\label{lem.V}
When $t\rightarrow \infty$, 
$$
\lim_{t\rightarrow \infty}V(\Gamma_t, \tilde{g}_t)=V(M, \tilde{g}_{\infty}),
$$
$$
\lim_{t\rightarrow \infty}\frac{V(\Gamma_t, g_+)}{V(\Gamma_t, g_{\mathbb{H}})}
=\frac{V(M,\tilde{g}_{\infty})}{V(\mathbb{S}^n,g_{\mathbb{S}})}. 
$$
\end{lemma}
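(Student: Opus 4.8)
The plan is to exploit the fact that $\tilde g = 4e^{-2\tilde t}g_+$ with $\tilde t = t - \log(1 + \tfrac{2^{2\gamma}}{d_\gamma}Q^{g_{\mathbb S}}_{2\gamma}e^{-2\gamma t} + O(e^{-2t}))$ depends only on $t$, so on the level set $\Gamma_t$ the conformal factor is a constant. First I would write, at a point where $t$ is smooth, $\tilde g_t = \left(\tfrac{2e^{-\tilde t}}{1}\right)^{2}\!g_+|_{\Gamma_t} = e^{2(\tilde t - t)}\cdot (2e^{-t})^{2}\,g_+|_{\Gamma_t}$, hence
$$
dV_{\tilde g_t} = e^{n(\tilde t - t)}\,(2e^{-t})^{n}\,dV_{g_+|_{\Gamma_t}} = \left(1 + \tfrac{2^{2\gamma}}{d_\gamma}Q^{g_{\mathbb S}}_{2\gamma}e^{-2\gamma t} + O(e^{-2t})\right)^{-n}(2e^{-t})^{n}\,dV_{g_+|_{\Gamma_t}}.
$$
Since by Lemma \ref{lem.u} (fourth bullet) $\Gamma_t$ is, for $t$ large, a Lipschitz graph over $M$ along the $r$-geodesics, and since $|u| = |r-t| \le C$ with $|du|_{g_+} \le Ce^{-2r}$, I can compare this with the corresponding expression on $\Sigma_r$; the Type~I compactification gives $dV_{\bar g_r} \to dV_{\hat g}$ as $r\to\infty$, and the bounded difference between $\bar g$ and $\tilde g$ from Lemma \ref{lem.phi} (the functions $w,\phi$ extend continuously, indeed Lipschitz or $C^{2\gamma}$, to $M$, with $\phi|_M = 1$ because the $e^{-2\gamma r}$ coefficients $Q^{\hat g}_{2\gamma}$ and $Q^{g_{\mathbb S}}_{2\gamma}$ enter the same way up to the bounded factor $\phi$) forces $\tilde g_t \to \tilde g_\infty = \hat g$ in an $L^1$ sense on $M$ under the graph projection.

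Second, for the first limit I would fix $t$ large, split $\Gamma_t = (\Gamma_t\setminus C_p)\cup(\Gamma_t\cap C_p)$, and use that for almost every large $t$ the projection of $\Gamma_t\cap C_p$ to $M$ has measure zero (stated in the sentence preceding the lemma, which follows from Lemma \ref{lem.OI}: $Q_p\cup L_p$ has Hausdorff dimension $\le n-1$ and $N_p$ is a countable union of smooth hypersurfaces meeting a generic $\Gamma_t$ in dimension $\le n-1$). On the smooth part, push the volume form forward to $M$ via the $r$-geodesic projection $\pi_t\colon\Gamma_t\to M$; the Jacobian of $\pi_t$ converges to $1$ uniformly (again by Lemma \ref{lem.u}, the Hessian estimate $\nabla^2 r(X,Y) = \langle X,Y\rangle + O(e^{-2r})$ controls the expansion of the normal exponential map), and the integrand $\left(\tfrac{dV_{\tilde g_t}}{dV_{\hat g}}\right)\circ\pi_t^{-1}$ converges pointwise a.e. to $1$ and is uniformly bounded. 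Dominated convergence then yields $V(\Gamma_t,\tilde g_t)\to V(M,\hat g) = V(M,\tilde g_\infty)$.

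Third, for the ratio I would apply exactly the same computation in the model space $\mathbb{H}^{n+1}$, where $r = t$ and $\tilde\rho = \Phi^{2/(n-2\gamma)}(t)$ give $dV_{g_{\mathbb H}|_{\Gamma_t}}$ in terms of the round sphere volume form times $(1 + O(e^{-2t}))$-type corrections with the \emph{same} $Q^{g_{\mathbb S}}_{2\gamma}$ coefficient, so that $\tfrac{V(\Gamma_t,g_{\mathbb H})}{V(\mathbb S^n,g_{\mathbb S})}$ behaves like $\big((2e^{-t})^n\big)\cdot(1+O(e^{-2\gamma t}))$. Dividing the two asymptotics, the common factor $(2e^{-t})^n$ and the common leading correction $\big(1+\tfrac{2^{2\gamma}}{d_\gamma}Q^{g_{\mathbb S}}_{2\gamma}e^{-2\gamma t}\big)^{-n}$ cancel, and what survives in the limit is precisely $\tfrac{V(M,\tilde g_\infty)}{V(\mathbb S^n,g_{\mathbb S})}$.

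The main obstacle I anticipate is the a.e.\ control near the cut locus: I must justify interchanging the limit $t\to\infty$ with integration over $\Gamma_t$ despite $\Gamma_t$ being only Lipschitz and carrying corners along $N_p$. The remedy is to integrate over $\Gamma_t\setminus C_p$, which is genuinely smooth, observe that the graph projection $\pi_t$ is a bi-Lipschitz homeomorphism onto its image with Jacobian tending to $1$, and control the excluded set using that for a.e.\ large $t$ (by the coarea formula applied to $t$ restricted to $C_p$, whose Hausdorff dimension is $\le n-1$ by Lemma \ref{lem.OI}) the slice $\Gamma_t\cap C_p$ has vanishing $n$-dimensional measure; the remaining uniform integrability comes from Lemma \ref{lem.u} and Lemma \ref{lem.phi}. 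Everything else is a routine bookkeeping of the asymptotic expansions \eqref{asy.r}, \eqref{asy.t} and the conformal change of the volume form.
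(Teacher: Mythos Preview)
Your overall strategy---project the Lipschitz graphs $\Gamma_t$ onto $M$ along the $r$-geodesics and apply dominated convergence---is correct and is essentially what the paper does (it defers the first limit to the computation (6.5) in \cite{DJ}). However, your claim that $\phi|_M = 1$, and hence $\tilde g_\infty = \hat g$, is false and constitutes a real gap. From \eqref{asy.r}--\eqref{asy.t} one has $\bar r = r + O(e^{-2\gamma r})$ and $\tilde t = t + O(e^{-2\gamma t})$, so $w = \bar r - \tilde t \to u|_M$ at the boundary, where $u = r - t$ is the bounded function of Lemma~\ref{lem.u}. The $Q_{2\gamma}$-coefficients you invoke sit at order $e^{-2\gamma r}$ and vanish in the limit; what survives is the zeroth-order shift $u$, which is generically nonzero. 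Thus $\tilde g_\infty = e^{2u|_M}\hat g$, and the pushed-forward integrand $(dV_{\tilde g_t}/dV_{\hat g})\circ \pi_t^{-1}$ converges not to $1$ but to $e^{n\, u|_M} = (\phi|_M)^{\frac{2n}{n-2\gamma}}$. Dominated convergence then yields $V(\Gamma_t,\tilde g_t) \to \int_M e^{n\, u|_M}\,dV_{\hat g} = V(M,\tilde g_\infty)$, which is the statement of the lemma; but your intermediate equality $V(M,\hat g) = V(M,\tilde g_\infty)$ is wrong. This matters conceptually: it is precisely the nontriviality of $\phi|_M$ that makes $\phi$ a useful test function in the proof of Theorem~\ref{thm.main}.

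For the second limit the paper's route is shorter than yours. Since $\tilde t$ depends on $t$ alone, the factor $2e^{-\tilde t}$ is constant on each $\Gamma_t$, so one has the exact identity
\[
\frac{V(\Gamma_t, g_+)}{V(\Gamma_t, g_{\mathbb{H}})}
=\frac{(e^{\tilde{t}}/2)^{n}\,V(\Gamma_t,\tilde{g}_t)}{\sinh^n(t)\,V(\mathbb{S}^n, g_{\mathbb{S}})}
=\frac{V(\Gamma_t,\tilde{g}_t)}{V(\mathbb{S}^n, g_{\mathbb{S}})}\bigl(1+o(1)\bigr),
\]
and the first limit finishes the job; there is no need to expand the hyperbolic side separately and match corrections. (A minor point: in your first display the exponent on the bracket should be $+n$ rather than $-n$, since $2e^{-\tilde t} = 2e^{-t}\cdot e^{t-\tilde t}$; this does not affect the limit.)
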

%%%%
\begin{proof}
The first is by the same calculation as (6.5) in \cite{DJ}. For the second, just notice that (\ref{asy.t}) implies that
$$
\frac{V(\Gamma_t, g_+)}{V(\Gamma_t, g_{\mathbb{H}})}
=\frac{(\frac{e^{\tilde{t}}}{2})^nV(\Gamma_t,\tilde{g})}{\sinh^n(t)V(\mathbb{S}^n, g_{\mathbb{S}})}
=\frac{V(\Gamma_t,\tilde{g})}{V(\mathbb{S}^n, g_{\mathbb{S}})}\left[1+o(1)\right].
$$
Let $t\rightarrow \infty$ and we finish the proof.  
\end{proof}

%%%%%%%%%%%%%%%%%%%%%%%%%%%

\subsection{Proof of Theorem \ref{thm.main}}
Now we consider the Yamabe functional
$$
I_{2\gamma}(\overline{X}, \bar{g};\phi)
=\frac
{-\frac{d_\gamma}{2\gamma}\int_{X}\left(|\nabla \phi|^2 + \frac{m+n-1}{2}\bar{J}_{\psi}^m \phi^2 \right)\bar{\rho}^m dV_{\bar{g}}
+\frac{(n-2\gamma)}{2}\int_{M}Q^{\hat{g}}_{2\gamma} \phi^2 dV_{\hat{g}} }
{\left(\int_{M}|\phi|^{\frac{2n}{n-2\gamma}}dV_{\hat{g}}\right)^{\frac{n-2\gamma}{n}}}. 
$$
Notice that $\bar{J}_{\psi}^m$, $\bar{H}_r$ uniformly bounded since $\bar{g}$ is smooth, and $\hat{g}=\bar{g}_{\infty}$. 
Then by Lemma \ref{lem.E}, \ref{lem.J}, \ref{lem.H1} ,\ref{lem.H2}, together with Lemma \ref{lem.phi}, we have
$$
\begin{aligned}
&\ 
-\frac{d_\gamma}{2\gamma}\int_{X}\left(|\nabla \phi|^2 + \frac{m+n-1}{2}\bar{J}_{\psi}^m \phi^2 \right)\bar{\rho}^m dV_{\bar{g}}
+\frac{(n-2\gamma)}{2}\int_{M}Q^{\hat{g}}_{2\gamma} \phi^2 dV_{\hat{g}} 
\\
\leq &\ 
\liminf_{r\rightarrow \infty}
-\frac{d_\gamma}{2\gamma}\left[
\int_{X_r} \left(|\nabla \phi|^2 + \frac{m+n-1}{2}\bar{J}_{\psi}^m \phi^2 \right) \bar{\rho}^m dV_{\bar{g}}+
\frac{(n-2\gamma)}{2n} \int_{\Sigma_r }\bar{H}\phi^2 \bar{\rho}^{m}  dV_{\bar{g}_r} 
\right]
\\
= &\ 
\liminf_{r\rightarrow \infty}
-\frac{d_\gamma}{2\gamma}\left[
\int_{X_r\backslash C_p} \left(|\nabla \phi|^2 + \frac{m+n-1}{2}\bar{J}_{\psi}^m \phi^2 \right) \bar{\rho}^m dV_{\bar{g}}+
\frac{(n-2\gamma)}{2n} \int_{\Sigma_r \backslash C_p}\bar{H}\phi^2 \bar{\rho}^{m}  dV_{\bar{g}_r} 
\right]
\\
\leq &\ 
\liminf_{r\rightarrow \infty} -\frac{d_\gamma}{2\gamma}\left[\int_{X_r\backslash C_p}\frac{m+n-1}{2}\tilde{J}_{\psi}^m \tilde{\rho}^m dV_{\tilde{g}}
+ \frac{ (n-2\gamma)}{2n}\int_{\Sigma_r\backslash C_p}\tilde{H}_r \tilde{\rho}^{m} dV_{\tilde{g}_r}
\right]
\\
\leq &\ 
\frac{(n-2\gamma)}{2} Q^{g_{\mathbb{S}}}_{2\gamma} V(M,\tilde{g}_{\infty}).
\end{aligned}
$$
This implies that 
$$
I_{2\gamma}(\overline{X}, \bar{g};\phi)
\leq \frac{\frac{(n-2\gamma)}{2} Q^{g_{\mathbb{S}}}_{2\gamma} V(M,\tilde{g}_{\infty})}{\left(\int_{M}|\phi|^{\frac{2n}{n-2\gamma}}dV_{\bar{g}_{\infty}}\right)^{\frac{n-2\gamma}{n}}}
=\frac{(n-2\gamma)}{2} Q^{g_{\mathbb{S}}}_{2\gamma} V(M,\tilde{g}_{\infty})^{\frac{2\gamma}{n}}.
$$
Therefore by Lemma \ref{lem.V} and Bishop-Gromov,
$$
\left(\frac{Y_{2\gamma}(M, [\hat{g}])}{Y_{2\gamma} (\mathbb{S}^n, [g_{\mathbb{S}}])}\right)^{\frac{n}{2\gamma}}
\leq 
\left(\frac{I_{2\gamma}(\overline{X}, \bar{g};\phi)}{Y_{2\gamma} (\mathbb{S}^n, [g_{\mathbb{S}}])}\right)^{\frac{n}{2\gamma}}
\leq \frac{V(M,\tilde{g}_{\infty})}{V(\mathbb{S}^n,g_{\mathbb{S}})}
=\lim_{t\rightarrow \infty}\frac{V(\Gamma_t, g_+)}{V(\Gamma_t, g_{\mathbb{H}})}
\leq  \frac{V(\Gamma_t, g_+)}{V(\Gamma_t, g_{\mathbb{H}})}. 
$$
We finish  the proof of Theorem \ref{thm.main}.

%%%%%%%%%%%%%%%%%%%%%%%%%%%%%%%%%%%%
\vspace{0.2in}
\section{The Escobar-Yamabe Constant}\label{sec.6}
For a general compact Riemannian manifold with boundary $(\overline{Y}^{n+1},\partial Y,\bar{h})$, Escobar \cite{Es1, Es2, Es3} introduced two types of Yamabe constants: 
$$
\begin{aligned}
Y_a(\overline{Y},\partial Y; [\bar{h}])=&\ 
\inf_{0<\varphi \in C^{\infty}(\overline{Y})} \frac{\int_{Y}\left(\frac{4n}{n-1}|\nabla \varphi|_{\bar{h}}^2 + R_{\bar{h}}\varphi^2\right)dV_{\bar{h}}
+2\int_{\partial Y}H_{\bar{h}}\varphi^2 dV_{\hat{h}} }{\left(\int_{Y}\varphi^{\frac{2(n+1)}{n-1}}dV_{\bar{h}}\right)^{(n-1)/(n+1)}};
\\
Y_b(\overline{Y},\partial Y;[\bar{h}])=&\ 
\inf_{0<\varphi \in C^{\infty}(\overline{Y})} \frac{\int_{Y}\left(\frac{4n}{n-1}|\nabla \varphi|_{\bar{h}}^2 + R_{\bar{h}}\varphi^2\right)dV_{\bar{h}}
+2\int_{\partial Y}H_{\bar{h}}\varphi^2 dV_{\hat{h}} }{\left(\int_{\partial Y}\varphi^{\frac{2n}{n-1}}dV_{\hat{h}}\right)^{(n-1)/n}}.
\end{aligned}
$$
where $\hat{h}=\bar{h}|_{\partial Y}$, $R_{\bar{h}}$ is the interior scalar curvautre and $H_{\bar{h}}$ is the boundary mean curvature of $\bar{h}$. 

Now, we follow the notations in Section \ref{sec.5}. 

%%%%%%%%%%%%%%%%%%%%%%%%%%%%%%%%%%%%%%%%%%
\subsection{Relating $Y_b$ and the Volume.}
First, we notice that while $(\overline{X}, M, \bar{g})$ is a conformal compactification of some Poincar\'{e}-Einstein manifold $(X, g_+)$ with conformal infinity $(M, [\hat{g}])$, then
$$
Y_b(\overline{X},M;[\bar{g}])=\frac{4n}{n-1} Y_1(M,[\hat{g}]) . 
$$
The proof of Theorem \ref{thm.main} directly implies the following

\begin{proposition} 
Let $\tilde{g}=4e^{-2\tilde{t}}g_+$ where $\tilde{t}$ is defined in (\ref{asy.t}) with $\gamma=\frac{1}{2}$ and $\tilde{g}_{\infty}=\tilde{g}|_{M}$. Then
$$
\left(\frac{Y_b(\overline{X},M;[\bar{g}])}{Y_b(\overline{\mathbb{S}}_+^{n+1},\mathbb{S}^n;[g_{\mathbb{S}^{n+1}}])}\right)^n
\leq  \frac{V(M,\tilde{g}_{\infty})}{V(\mathbb{S}^n,g_{\mathbb{S}^n})}
\leq \frac{V(\Gamma_t, g_+)}{V(\Gamma_t, g_{\mathbb{H}})}, \quad 0<t <\infty.
$$
\end{proposition}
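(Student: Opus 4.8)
The plan is to specialize the machinery of Theorem~\ref{thm.main} to the endpoint $\gamma=\tfrac12$, where the fractional GJMS operator $P_1^{\hat g}$ is the conformal Laplacian and the fractional Yamabe constant $Y_1(M,[\hat g])$ is the ordinary (Escobar) boundary Yamabe invariant. First I would record the well-known identification
$$
Y_b(\overline{X},M;[\bar g])=\tfrac{4n}{n-1}\,Y_1(M,[\hat g]),
$$
which holds because any conformal compactification $\bar g=\bar\rho^2 g_+$ of a Poincar\'e--Einstein manifold with $\bar\rho$ the adapted defining function for $\gamma=\tfrac12$ makes the interior scalar-curvature term drop out of Escobar's $Y_b$-functional (equivalently, $\bar J_\psi^0=0$ by Lemma~\ref{lem.CC3}), so that $Y_b$ reduces to exactly the trace Sobolev quotient defining $Y_1$ up to the normalising constant $\tfrac{4n}{n-1}$. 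Applying the same identity to the hyperbolic ball model gives $Y_b(\overline{\mathbb S}_+^{n+1},\mathbb S^n;[g_{\mathbb S^{n+1}}])=\tfrac{4n}{n-1}\,Y_1(\mathbb S^n,[g_{\mathbb S^n}])$, so the ratio $Y_b/Y_b$ on the left equals $Y_1(M,[\hat g])/Y_1(\mathbb S^n,[g_{\mathbb S^n}])$.

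Next I would invoke Theorem~\ref{thm.main} with $\gamma=\tfrac12$: since $2\gamma=1$ and $\tfrac{n}{2\gamma}=n$, the inequality \eqref{eq.main} reads
$$
\left(\frac{Y_1(M,[\hat g])}{Y_1(\mathbb S^n,[g_{\mathbb S^n}])}\right)^{n}
\leq \frac{V(\Gamma_t,g_+)}{V(\Gamma_t,g_{\mathbb H})}
\leq \frac{V(B_t,g_+)}{V(B_t,g_{\mathbb H})}\leq 1,\qquad 0<t<\infty.
$$
Combining this with the first step gives the outer inequality of the proposition directly. For the middle inequality, I would extract from the proof of Theorem~\ref{thm.main} the intermediate identity
$$
\left(\frac{Y_1(M,[\hat g])}{Y_1(\mathbb S^n,[g_{\mathbb S^n}])}\right)^{n}
=\frac{I_1(\overline X,\bar g;\phi)^{n}}{Y_1(\mathbb S^n,[g_{\mathbb S^n}])^{n}}\cdot(\cdots)\le \frac{V(M,\tilde g_\infty)}{V(\mathbb S^n,g_{\mathbb S^n})}=\lim_{t\to\infty}\frac{V(\Gamma_t,g_+)}{V(\Gamma_t,g_{\mathbb H})},
$$
where $\tilde g=4e^{-2\tilde t}g_+$ with $\tilde t$ the Type~II defining function built from $\Phi$ at $\gamma=\tfrac12$ (Lemma~\ref{lem.V}). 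Then Bishop--Gromov applied to geodesic spheres — exactly as in the last display of Section~\ref{sec.5} — shows $t\mapsto V(\Gamma_t,g_+)/V(\Gamma_t,g_{\mathbb H})$ is nonincreasing, whence $\dfrac{V(M,\tilde g_\infty)}{V(\mathbb S^n,g_{\mathbb S^n})}\le\dfrac{V(\Gamma_t,g_+)}{V(\Gamma_t,g_{\mathbb H})}$ for every finite $t$, giving the remaining inequality.

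The genuinely new content beyond quoting Theorem~\ref{thm.main} is modest, so there is no serious obstacle; the only point that needs care is the identification $Y_b(\overline X,M;[\bar g])=\tfrac{4n}{n-1}Y_1(M,[\hat g])$ and, more subtly, checking that the value $Y_1(M,[\hat g])/Y_1(\mathbb S^n,[g_{\mathbb S^n}])$ appearing here is conformally invariant — i.e. independent of the smooth representative $\hat g$ and of the choice of conformal compactification $\bar g$ — so that the left-hand ratio is well defined. This follows because $Y_1(M,[\hat g])$ depends only on the conformal class $[\hat g]$ by definition, and because the adapted-compactification construction of Section~\ref{sec.5} produces, for each choice of $\hat g\in[\hat g]$, a $\bar g$ realising the same $Y_b$; I would spell this out in a sentence. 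The remaining ingredients — convergence $V(\Gamma_t,\tilde g_t)\to V(M,\tilde g_\infty)$, positivity of $\tilde J_\psi^0$'s sign via Lemma~\ref{lem.J}, the mean-curvature expansions of Lemmas~\ref{lem.H1}--\ref{lem.H2} with $Q_1^{\hat g}=\tfrac{2}{n-2}P_2^{\hat g}1=\hat J$, and the cut-locus analysis of Lemma~\ref{lem.E} — all carry over verbatim, so the proof is essentially a citation of the $\gamma=\tfrac12$ case of Theorem~\ref{thm.main} together with the dictionary between $Y_1$ and $Y_b$.
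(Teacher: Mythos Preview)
Your approach is correct and matches the paper's: the proposition is presented there as a direct consequence of the proof of Theorem~\ref{thm.main} specialized to $\gamma=\tfrac12$, together with the identity $Y_b(\overline{X},M;[\bar g])=\tfrac{4n}{n-1}\,Y_1(M,[\hat g])$, exactly as you outline. One minor slip in your exposition (which does not affect the argument): at $\gamma=\tfrac12$ the operator $P_1^{\hat g}$ is a genuinely fractional order-one operator, \emph{not} the conformal Laplacian $P_2^{\hat g}$, and correspondingly $Q_1^{\hat g}=\tfrac{2}{n-1}P_1^{\hat g}1$ rather than $\tfrac{2}{n-2}P_2^{\hat g}1$.
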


%%%%%%%%%%%%%%%%%%%%%%%%%%%%%%%%%%%%%%%%%%
\subsection{Relating $Y_a$ with the  Volume.}
Second,  we consider another pair of  conformal compactifications as follows, which are actually Type I  and Type II with the parameter $\gamma=\frac{n}{2}+1$. In model case, they are both the half sphere compactification of $\mathbb{H}^{n+1}$. 
Here we list them separately since the asymptotic expansions are different from $\gamma\in (0,1)$.

\begin{itemize}
\item  Let $\bar{g}=4e^{-2\bar{r}}g_+$, where $\bar{r}=-\log\frac{\bar{\rho}}{2}$, $\bar{\rho}=v^{-1}$ and $v$ satisfies 
$$ 
-\Delta_+ v+(n+1)v=0,\quad 
xv|_{M}=1.
$$
Then $v$ and  $\bar{\rho}$ have the asymptotic expansions as follows: 
$$
\begin{aligned}
v=&\ 
x^{-1}
\left[1+x^2 \left(\frac{1}{2n}\hat{J}\right) +O(x^{3})
\right], 
\\
\bar{\rho}=&\ 
x\left[1-x^2 \left(\frac{1}{2n}\hat{J} \right)+O(x^{3})\right], 
\end{aligned}
$$
which implies that 
\begin{equation}\label{asy.r2}
	\bar{r} =r+\log\left(1+\frac{2}{n}\hat{J} e^{-2 r}+O(e^{-3r})\right).
\end{equation}

\item	 Let $\tilde{g}=4e^{-2\tilde{t}}g_+$, where 
\begin{equation}\label{asy.t2}
\tilde{t}=t+\log(1+e^{-2t}).
\end{equation}

\end{itemize}

Following a similar discussion as in Section \ref{sec.5}, we have that $w=\bar{r}-\tilde{t}$  is uniformly bounded and
$$
\langle\nabla \tilde{t},\nabla \bar{r}\rangle_{g_+}=1+O(e^{-2r}), \quad 
|\nabla w|_{g_+}=O(e^{-2r}). 
$$
Set $\varphi=e^{\frac{n-1}{2}w}$, then $\varphi, w\in W^{1,2}(\overline{X},g)$ implies that $\varphi$ and $w$ have Lipschitz extensions to $M$. 
%%%
\begin{lemma}
At any point where $t$ is smooth, the scalar curvature of $\tilde{g}$ satisfies
$$ 
\tilde{R}\leq n(n+1),
$$
and the second fundamental form of $(\Sigma_r , \tilde{g}_r)$ converges to $0$ as $r \rightarrow \infty$. 
\end{lemma}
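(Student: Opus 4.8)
The plan is to mimic the computation done for $\gamma\in(0,1)$ in Lemma \ref{lem.J} and Lemma \ref{lem.H2}, now with the specific compactification $\tilde{g}=4e^{-2\tilde{t}}g_+$ coming from $\tilde{t}=t+\log(1+e^{-2t})$. First I would record the conformal transformation formulas: under $\tilde{g}=4e^{-2\tilde t}g_+$ one has
\[
\tilde R = e^{2\tilde t}\left(2n\,\Delta_+\tilde t - n(n+1) - n(n-1)|\nabla\tilde t|_{g_+}^2\right)\cdot\tfrac14\cdot 4,
\]
so that $\tilde R\le n(n+1)$ is equivalent, after rearranging, to an inequality of the form $\Delta_+\tilde t \le \tfrac{n-1}{2}|\nabla\tilde t|_{g_+}^2 + \tfrac{n+1}{2}$ at smooth points of $t$. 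The key is to express everything in terms of the one-variable function $\Psi(t) = e^{-\tilde t} = \tfrac{1}{e^t(1+e^{-2t})} = \tfrac{1}{e^t+e^{-t}} = \tfrac{1}{2\cosh t}$, analogously to how $\Phi$ was used in Lemma \ref{lem.J}. Since $\tilde t$ depends only on $t$, we have $\nabla\tilde t = \tilde t'(t)\nabla t$ and $\Delta_+\tilde t = \tilde t'(t)\Delta_+ t + \tilde t''(t)|\nabla t|_{g_+}^2 = \tilde t'(t)\Delta_+ t + \tilde t''(t)$, using $|\nabla t|_{g_+}=1$ a.e.

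Next I would reduce the scalar curvature bound to a statement about $\Psi$ and invoke the Laplacian comparison. Writing the quantity $\tilde R - n(n+1)$ (up to the positive factor $e^{2\tilde t}$) as a multiple of $\Delta_+\Psi(t) - (n+1)\Psi(t)$ — this is the analogue of the identity in Lemma \ref{lem.J} where $\tilde J_\psi^m$ was a negative multiple of $\Delta_+\Phi + (\tfrac{n^2}{4}-\gamma^2)\Phi$ — I then use $\Delta_+\Psi = \Psi'(t)\Delta_+ t + \Psi''(t)$, together with $\Psi'(t)\le 0$ (clear since $\Psi = 1/(2\cosh t)$ is decreasing) and the Laplacian comparison $\Delta_+ t\le n\coth t = \Delta_{\mathbb H} t$ from Lemma \ref{lem.SY}, to conclude
\[
\Delta_+\Psi(t) - (n+1)\Psi(t) \ge \Psi'(t)\,\Delta_{\mathbb H}t + \Psi''(t) - (n+1)\Psi(t) = \Delta_{\mathbb H}\Psi(t) - (n+1)\Psi(t) = 0,
\]
where the last equality holds because $\Psi(t)=1/(2\cosh t)$ is precisely the radial solution of $-\Delta_{\mathbb H}v + (n+1)v=0$ with $xv|_M=1$ in the model space (that is the content of the displayed equation defining $v$ in the preceding bullet, specialized to $\mathbb H^{n+1}$). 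The correct sign of the proportionality constant then gives $\tilde R\le n(n+1)$.

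For the second assertion — that the second fundamental form $\tilde\Pi_{ij}$ of $(\Sigma_r,\tilde g_r)$ tends to $0$ as $r\to\infty$ — I would repeat verbatim the computation in the proof of Lemma \ref{lem.H2}: with $\{e_i\}$ a $g_+$-orthonormal frame on $T_q\Sigma_r$ with unit normal $\nabla r$, and $\tilde e_i = \tfrac{e^{\tilde t}}{2}e_i$ the corresponding $\tilde g$-orthonormal frame, one gets
\[
\tilde\Pi_{ij} = \tfrac{e^{\tilde t}}{2}\left(\nabla^2 r(e_i,e_j) - \langle\nabla\tilde t,\nabla r\rangle_{g_+}\,\delta_{ij}\right).
\]
Now I would plug in $\nabla^2 r(e_i,e_j) = \delta_{ij} + O(e^{-2r})$ and $\langle\nabla\tilde t,\nabla r\rangle_{g_+} = 1 + O(e^{-2r})$ a.e. (the latter from $\tilde t = t + \log(1+e^{-2t})$, $|u|=|r-t|\le C$, and Lemma \ref{lem.u}), together with the key cancellation: unlike the $\gamma\in(0,1)$ case where the leading term was the nonzero constant $-\tfrac{2n\gamma}{d_\gamma}Q^{g_{\mathbb S}}_{2\gamma}$, here there is no $e^{-2\gamma r}$ term (the relevant ``$\gamma$'' is $\tfrac n2+1$ and the expansion \eqref{asy.t2} starts at order $e^{-2t}$), so the $\delta_{ij}$ terms cancel to leading order and $\tilde\Pi_{ij} = O(e^{(\tilde t - 2r)}\cdot e^{0})\cdot O(e^{-2r}) = O(e^{-2r})\to 0$. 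I expect the main obstacle to be bookkeeping the precise power in the error term $\tilde\Pi_{ij} = O(e^{\tilde t}\cdot e^{-2r})$ and confirming it actually decays — one needs $\tilde t - 2r \to -\infty$, which follows from $\tilde t = r + O(e^{-2r}) + O(1)$; more precisely $\tilde t\le r + C$, so $e^{\tilde t}e^{-2r}\le Ce^{-r}\to 0$. The rest is a routine transcription of the $\gamma\in(0,1)$ arguments with $Q^{g_{\mathbb S}}_{2\gamma}$ replaced by the now-vanishing leading coefficient.
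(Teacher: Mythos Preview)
Your treatment of the second fundamental form is correct and matches the paper's: both compute $\tilde\Pi_{ij}=\tfrac{e^{\tilde t}}{2}\bigl(\nabla^2 r(e_i,e_j)-\langle\nabla\tilde t,\nabla r\rangle_{g_+}\delta_{ij}\bigr)$, observe that $\tilde t'(t)=\tanh t$ so $\langle\nabla\tilde t,\nabla r\rangle_{g_+}=\tanh t\,(1+O(e^{-2r}))$, and use $1-\tanh t=O(e^{-2t})$ together with $e^{\tilde t}=2\cosh t=O(e^t)$ to get $\tilde\Pi_{ij}=O(e^{-r})$.

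The scalar curvature argument, however, has a genuine gap. Your key identity --- that $\tilde R - n(n+1)$ is, up to a positive factor, a multiple of $\Delta_+\Psi - (n+1)\Psi$ with $\Psi=e^{-\tilde t}=1/(2\cosh t)$ --- is false on two counts. First, the conformal formula $\tilde R=\tfrac{e^{2\tilde t}}{4}\bigl(-n(n+1)+2n\Delta_+\tilde t-n(n-1)|\nabla\tilde t|^2\bigr)$ rewritten via $\tilde t=-\log\Psi$ produces a residual $|\nabla\Psi|^2/\Psi^2$ term with coefficient $n(3-n)$, which does not vanish for $n\ge 4$; so the expression is not a clean multiple of $\Delta_+\Psi-(n+1)\Psi$. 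Second, and more fatally, $\Psi=1/(2\cosh t)$ does \emph{not} solve $-\Delta_{\mathbb H}v+(n+1)v=0$: a direct check gives $\Delta_{\mathbb H}\Psi-(n+1)\Psi=2\Psi(\tanh^2 t-n-1)\ne 0$. The radial solution with $xv|_M=1$ is $v=\cosh t=1/(2\Psi)$, not $\Psi$; you have confused $\tilde\rho/2$ with $\tilde\rho^{-1}$. If you want to mimic Lemma~\ref{lem.J} exactly, the correct power is $\Phi=e^{-\frac{n-1}{2}\tilde t}$ (so that the gradient term is absorbed), which satisfies $-\Delta_{\mathbb H}\Phi-\tfrac{n^2-1}{4}\Phi=0$ in the model, and then $\Phi'\le 0$ plus Laplacian comparison give the bound.

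The paper bypasses this by a direct substitution: since $\tilde t'=\tanh t$ and $\tilde t''=\mathrm{sech}^2 t$, one gets $\tilde R=\cosh^2 t\bigl[2n\tanh t\,\Delta_+t-n(n-1)-n(n+1)\tanh^2 t\bigr]$, and then $\Delta_+t\le n\coth t$ yields $\tilde R\le \cosh^2 t\cdot n(n+1)(1-\tanh^2 t)=n(n+1)$. This two-line computation is the cleanest route here.
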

\begin{proof}
For the scalar curvature, $\tilde{g}=4e^{-2\tilde{t}}g_+$, we have
$$ 
\tilde{R}=\frac{e^{2\tilde{t}}}{4}\left(-n(n+1)+2n\Delta_{+}\tilde{t}-n(n-1)|\nabla \tilde{t}|^2_{g_+}\right).
$$
Recall $\tilde{t}=t+\log(1+e^{-2t})$ and $\Delta_+ t\leq n\coth (t)$, then
%\[ \nabla \tilde{t}=\tanh(t), \quad \Delta_{+}s=\tanh(t)\Delta_+ t +(1-\tanh^{2}(t))|\nabla t|^{2}_{g_+}.\]
%Thus
$$
\begin{aligned}
\tilde{R}
=&\ \frac{1}{4}e^{2t}\left(1+e^{-2t}\right)^2 \left[-n(n+1)+2n \tanh(t)\Delta_+ t +2n (1-\tanh^{2}(t))-n(n-1) \tanh^{2}(t)\right]
\\
=&\ \cosh^2 (t)\left[2n \tanh(t)\Delta_+ t -n(n-1)-n(n+1) \tanh^{2}(t)\right]
\\
\leq&\ 
 \cosh^2 (t)\left[2n^2 \tanh(t) \coth(t) -n(n-1)-n(n+1) \tanh^{2}(t)\right]\\
 =&\ n(n+1). 
\end{aligned}
$$

For the second fundamental form, let $\frac{e^{\tilde{t}}}{2}\nabla r$
be the unit normal to the level set $(\Sigma_r, \tilde{g}_r)$. Choose an orthonorma basis $\{e_i\}(1\leq i\leq n)$ for $T_{q}\Sigma_r$ w.r.t. metric $g_+$, then $\tilde{e}_i = \frac{e^{\tilde{t}}}{2} e_i$ is an orthonormal basis for $T_q \Sigma_r$ with metric $\tilde{g}_r$. Thus
$$
\begin{aligned}
\tilde{\Pi} (\tilde{e}_i,\tilde{e}_j)
 %=\langle\nabla_{e_i}N,e_j\rangle_{g_+}-\langle\nabla \tilde{t},N\rangle_{g_+}\delta_{ij}
=&\ \frac{e^{\tilde{t}}}{2}\left(\nabla^2 r(e_i,e_j)-\langle\nabla \tilde{t},\nabla r\rangle_{g_+}\delta_{ij}\right)\\
=&\ \frac{1}{2}e^t \left(1+e^{-2t}\right)\left[\delta_{ij}+O(e^{-2r})-\frac{1-e^{-2t}}{1+e^{-2t}}\left(1+O(e^{-2r})\right)\delta_{ij}\right]
=O(e^{-r}).
\end{aligned}
$$
\end{proof}

%%%%%%
\begin{proposition}
Let $\tilde{g}=4e^{-2\tilde{t}}g_+$ where $\tilde{t}$ is defined in (\ref{asy.t2}). Then
\begin{equation}\label{eq.Ya}
\left(\frac{Y_a(\overline{X},M;[\bar{g}])}{Y_a(\overline{\mathbb{S}}_+^{n+1},\mathbb{S}^n;[g_{\mathbb{S}^{n+1}}])}\right)^{\frac{n+1}{2}}
\leq  \frac{V(\overline{X},\tilde{g})}{V(\overline{\mathbb{S}}_+^{n+1}, g_{\mathbb{S}^{n+1}})}.
%\leq  \frac{V(B_t(p), g_+)}{V(B_t(0), g_{\mathbb{H}})}, \quad 0<t<\infty. 
\end{equation}
\end{proposition}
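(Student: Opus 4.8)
The plan is to mimic the proof of Theorem \ref{thm.main} almost verbatim, replacing the fractional Yamabe functional $I_{2\gamma}$ by the Escobar functional $Y_a$ and replacing the boundary measure $dV_{\hat g}$ with the interior measure $dV_{\bar g}$. First I would note that since $(\overline X,M,\bar g)$ is a conformal compactification of the Poincar\'e--Einstein metric $g_+$ arising from the adapted defining function with $\gamma=\tfrac n2+1$ (equivalently $s=n+1$, $m=1-2\gamma=-(n+1)$), the weighted scalar curvature vanishes by Lemma \ref{lem.CC3}, so that $\bar R_{\bar g}=n(n+1)+$ lower order; in fact the expansion (\ref{asy.r2}) shows $\bar R_{\bar g}$ is a smooth bounded function on $\overline X$ and $\bar g|_{TM}=\hat g$. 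The key point, exactly as in Section \ref{sec.5}, is to use $\varphi=e^{\frac{n-1}{2}w}$ with $w=\bar r-\tilde t$ as a test function in $Y_a(\overline X,M;[\bar g])$; since $\varphi,w\in W^{1,2}(\overline X,g)$ and extend Lipschitz to $M$ (as noted just above the statement), $\varphi$ is admissible.

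Next I would estimate the numerator of $Y_a(\overline X,M;[\bar g])$ evaluated at $\varphi$. The relevant conformal invariance is: writing $\tilde g=\varphi^{\frac{4}{n-1}}\bar g$ on the smooth locus $\overline X_r\setminus C_p$, a standard integration by parts converts $\int(\tfrac{4n}{n-1}|\bar\nabla\varphi|^2+\bar R\varphi^2)dV_{\bar g}+2\int_{\Sigma_r}\bar H\varphi^2\,dV_{\bar g_r}$ into $\int \tilde R\,dV_{\tilde g}+2\int_{\Sigma_r}\tilde H\,dV_{\tilde g_r}$ (the appropriately normalized total-scalar-plus-boundary-mean-curvature energy of $\tilde g$). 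The cut-locus terms are handled exactly as in Lemma \ref{lem.E}: the contribution of the conjugate/non-conjugate locus $Q_p\cup L_p$ vanishes in the $\epsilon\to0$ limit by Lemma \ref{lem.OI} and the $W^{1,2}$-regularity of $\varphi$, while the normal cut locus $N_p$ gives a term with the right sign because $\partial_{\nu^+}\tilde t+\partial_{\nu^-}\tilde t$ has a definite sign by Lemma \ref{lem.LQS} together with $\tilde t=t+\log(1+e^{-2t})$ being monotone in $t$. Then the preceding lemma gives $\tilde R\le n(n+1)$ and $\tilde\Pi=O(e^{-r})\to0$, so the boundary term over $\Sigma_r$ vanishes as $r\to\infty$ and we are left with
\[
\int_X\Big(\tfrac{4n}{n-1}|\bar\nabla\varphi|^2+\bar R\varphi^2\Big)dV_{\bar g}+2\int_M H_{\hat g}\varphi^2\,dV_{\hat g}\ \le\ n(n+1)\,V(\overline X,\tilde g),
\]
since $\tilde t\to t$ at infinity identifies $\lim_{r\to\infty}V(\overline X_r,\tilde g)=V(\overline X,\tilde g)$ and the boundary term for the model gives exactly the $2\int_M H$ piece.

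Finally I would divide by $\big(\int_X\varphi^{\frac{2(n+1)}{n-1}}dV_{\bar g}\big)^{(n-1)/(n+1)}=V(\overline X,\tilde g)^{(n-1)/(n+1)}$, obtaining $Y_a(\overline X,M;[\bar g])\le n(n+1)\,V(\overline X,\tilde g)^{2/(n+1)}$. For the model $\mathbb H^{n+1}$ the half-sphere compactification $g_{\mathbb S^{n+1}}$ is the round metric with $R=n(n+1)$, totally geodesic boundary, so $Y_a(\overline{\mathbb S}^{n+1}_+,\mathbb S^n;[g_{\mathbb S^{n+1}}])=n(n+1)\,V(\overline{\mathbb S}^{n+1}_+,g_{\mathbb S^{n+1}})^{2/(n+1)}$, and rearranging yields (\ref{eq.Ya}). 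I expect the main obstacle to be the careful justification that all the boundary/corner integrals behave correctly with this choice $\gamma=\tfrac n2+1$: the weights $\bar\rho^m$ now have a negative exponent $m=-(n+1)$ rather than $m\in(-1,1)$, so one must check that the integration by parts and the vanishing of the $\partial B_r^\epsilon$ and $\Sigma_r$ contributions still go through with the measure $dV_{\bar g}$ (not the weighted measure), using that $\bar g$ is a genuine compactification and $\varphi$ is bounded with $W^{1,2}$ gradient; once this regularity bookkeeping is in place, the inequality chain is formally identical to the proof of Theorem \ref{thm.main}.
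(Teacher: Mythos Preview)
Your proposal is correct and follows essentially the same approach as the paper: take $\varphi=e^{\frac{n-1}{2}w}$ as a test function in the Escobar functional, use the conformal invariance of the total-scalar-plus-mean-curvature energy on $\overline{X}_r\setminus C_p$, handle the cut locus exactly as in Lemma \ref{lem.E}, invoke the preceding lemma for $\tilde R\le n(n+1)$ and $\tilde\Pi\to 0$, and compare with the model value $Y_a(\overline{\mathbb S}^{n+1}_+,\mathbb S^n;[g_{\mathbb S^{n+1}}])=n(n+1)V(\overline{\mathbb S}^{n+1}_+)^{2/(n+1)}$. Your worry about the weight $\bar\rho^m$ with $m=-(n+1)$ is a red herring you correctly resolve yourself: the $Y_a$ functional uses the unweighted measure $dV_{\bar g}$, $\bar g$ is a smooth compactification, and $\varphi$ is bounded with $W^{1,2}$ gradient, so all the integrations by parts and limits go through without any weighted analysis; the only minor slip is writing $H_{\hat g}$ where you mean the mean curvature of $M$ in $(\overline X,\bar g)$.
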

%%%%%%
\begin{proof}
Consider the conformal transformation  $\tilde{g}=\varphi^{\frac{4}{n-1}}\bar{g}$ and the Yamabe fractional
$$
I_a(\overline{X}, \bar{g};\varphi)
=\frac{\int_{X}\left(\frac{4n}{n-1}|\bar{\nabla} \varphi|_{\bar{g}}^2 + \bar{R}\varphi^2\right)dV_{\bar{g}}+2\int_{M}\bar{H}\varphi^2 dV_{\hat{g}} }
{\left(\int_{X}\varphi^{\frac{2(n+1)}{n-1}}dV_{\bar{g}}\right)^{\frac{n-1}{n+1}}}.
$$
Then by a similar proof as Theorem \ref{thm.main},
we have
$$
\begin{aligned}
I_a(\overline{X}, \bar{g};\varphi)
\leq&\  \liminf_{r\to\infty} \frac{\int_{X_r\backslash C_p}
 \left(\frac{4n}{n-1}|\bar{\nabla} \varphi|_{\bar{g}}^2+ \bar{R}\varphi^2\right)dV_{\bar{g}}
+2\int_{\Sigma_r\backslash C_p}\bar{H}\varphi^2 dV_{\bar{g}_r}}
{\left(\int_{X_r}\varphi^{\frac{2(n+1)}{n-1}}dV_{\bar{g}}\right)^{\frac{n-1}{n+1}} }
\\
\leq &\ 
\liminf_{r\to\infty}\frac{\int_{X_r\backslash C_p}\tilde{R}dV_{\tilde{g}} 
+2\int_{\Sigma_r\backslash C_p}\tilde{H}_rdV_{\tilde{g}_r}} 
{\left(\int_{X_r}dV_{\tilde{g}}\right)^{\frac{n-1}{n+1}} } 
\\
\leq &\ n(n+1) V(\overline{X}, \tilde{g})^{\frac{2}{n+1}}.
\end{aligned}
$$
Therefore $Y_a(\overline{X},M;[\bar{g}])\leq\ n(n+1) V(\overline{X}, \tilde{g})^{\frac{2}{n+1}}$.
Since $Y_a(\overline{\mathbb{S}}_+^{n+1},\mathbb{S}^n;[g_{\mathbb{S}^{n+1}}])=n(n+1)V(\overline{\mathbb{S}}_+^{n+1}, g_{\mathbb{S}^{n+1}})^{\frac{2}{n+1}}$, we finish the proof. 
\end{proof}

Notice that, for all $0<t<\infty$, $ \frac{e^{\tilde{t}}}{2}=\cosh(t)$ and $\cosh^2 (t)g_{\mathbb{H}}$ is the standard round metric on $\mathbb{S}^{n+1}_+$. Moreover, the ratio function
$$
\eta(t)=\frac{V(\Gamma_t(p), g_+)}{V(\Gamma_t(0), g_{\mathbb{H}})}
=\frac{(\frac{e^{\tilde{t}}}{2})^n V(\Gamma_t(p), \tilde{g})}{\cosh^n (t)V(\Gamma_t(0), g_{\mathbb{S}^{n+1}})}
=\frac{V(\Gamma_t(p), \tilde{g})}{V(\Gamma_t(0), g_{\mathbb{S}^{n+1}})}
$$
is nonincreasing. Hence
$$
\frac{V(B_t(p), \tilde{g})}{V(B_t(0), g_{\mathbb{S}^{n+1}})}
$$
is nonincreasing with limit 
$$
\frac{V(\overline{X},\tilde{g})}{V(\overline{\mathbb{S}}_+^{n+1}, g_{\mathbb{S}^{n+1}})}.
$$
Therefore (\ref{eq.Ya}) provide a lower bound for some "mean value" of $\eta(t)$.

%%%%%%%%%%%%%%%%%%%%%%%%%%%%%%%%%%%%%%%%%%%%%%%%%%%%%
% Definition of Fractional Operator with background metric of finite regularity

%%%%%%%%%%%%%%%%%%%%%%%%%%%%%%%%%%%%%%%%%%%%%%%%%%%%%%%
%%% Appendix sections. žœÂŒÕÂœÚ, ·Ç±ØÑ¡
%%%%%%%%%%%%%%%%%%%%%%%%%%%%%%%%%%%%%%%%%%%%%%%%%%%%%%%
%\begin{appendix}

%\end{appendix}

\end{document}